\numberwithin{equation}{section}
\DeclareMathAlphabet{\pazocal}{OMS}{zplm}{m}{n}
\def\eps{\varepsilon }
\def\e{\varepsilon}
\newcommand\R{\mathbb R}
\def\eps{\varepsilon}
\def\e{\varepsilon}
\newcommand\br{\begin{remark}}
\newcommand\er{\end{remark}}
\newcommand\bp{\begin{pmatrix}}
\newcommand\ep{\end{pmatrix}}
\newcommand{\be}{\begin{equation}}
\newcommand{\ee}{\end{equation}}
\newcommand\ba{\begin{equation}\begin{aligned}}
\newcommand\ea{\end{aligned}\end{equation}}
\newcommand{\bap}{\begin{app}}
\newcommand{\eap}{\end{app}}
\newcommand{\begs}{\begin{exams}}
\newcommand{\eegs}{\end{exams}}
\newcommand{\beg}{\begin{example}}
\newcommand{\eeg}{\end{exaplem}}
\newcommand{\bpr}{\begin{proposition}}
\newcommand{\epr}{\end{proposition}}
\newcommand{\bt}{\begin{theorem}}
\newcommand{\et}{\end{theorem}}
\newcommand{\bc}{\begin{corollary}}
\newcommand{\ec}{\end{corollary}}
\newcommand{\bl}{\begin{lemma}}
\newcommand{\el}{\end{lemma}}
\newcommand{\bd}{\begin{definition}}
\newcommand{\ed}{\end{definition}}
\newcommand{\brs}{\begin{remarks}}
\newcommand{\ers}{\end{remarks}}
\newcommand{\const}{\text{\rm constant}}
\newcommand{\Span}{{\rm Span }}
\newcommand{\sgn}{\text{\rm sgn}}
\newtheorem{theorem}{Theorem}[section]
\newtheorem{proposition}[theorem]{Proposition}
\newtheorem{corollary}[theorem]{Corollary}
\newtheorem{lemma}[theorem]{Lemma}
\theoremstyle{remark}
\newtheorem{remark}[theorem]{Remark}
\theoremstyle{definition}
\newtheorem{definition}[theorem]{Definition}
\newtheorem{example}[theorem]{Example}
\newcommand{\beq}{\begin{equation}}
\newcommand{\eeq}{\end{equation}}
\title{Nonconvex optimization and convergence of stochastic gradient descent, 
and solution of asynchronous games}
\author{Jessica Babyak}
\address{Indiana University, Bloomington, IN 47405}
\email{jt103@iu.edu}
\thanks{Research of J.B. was partially supported under NSF grants no. DMS-2154387 and DMS-2206105}
\author{Kevin Buck}
\address{Indiana University, Bloomington, IN 47405}
\email{kevbuck@iu.edu}
\thanks{Research of K.B. was partially supported under NSF grants no. DMS-2154387 and DMS-2206105}
\author{Paolo Piersanti}
\address{The Chinese University of Hong Kong, Shenzhen}
\email{ppiersanti@cuhk.edu.cn}
\thanks{Research of P.P. was partially supported by a Zorn Postdoc at Indiana University, Bloomington.} 
\author{Kevin Zumbrun}
\address{Indiana University, Bloomington, IN 47405}
\email{kzumbrun@iu.edu}
\thanks{Research of K.Z. was partially supported under NSF grants no. DMS-2154387 and DMS-2206105}
\author{Christiane Gallos}
\address{Indiana University, Bloomington, IN 47405}
\email{chgallos@iu.edu}
\thanks{Research of C.G. was partially supported under NSF grants no. DMS-2154387 and DMS-2206105}
\author{Dorothea Gallos}
\address{Indiana University, Bloomington, IN 47405}
\email{dgallos@iu.edu}
\thanks{Research of D.G. was partially supported under NSF grants no. DMS-2154387 and DMS-2206105}
\begin{document}

\begin{abstract}
We review convergence and behavior of stochastic gradient descent for convex and nonconvex optimization, 
establishing various conditions for convergence to zero of the variance of the gradient of the 
objective function, and presenting a number of simple examples demonstrating the approximate 
evolution of the probability density under iteration, including applications to both classical
two-player and asynchronous multiplayer games.
\end{abstract}

\date{\today}
\maketitle
\tableofcontents

\section{Introduction}\label{s:intro}
The purpose of this note is to review in a mimimalist setting the method of stochastic gradient descent,
and its application to nonconvex optimization, giving in the process an elementary proof of convergence
in probability of the resulting stochastic approximants to the set of critical points of the objective function
under mild standard assumptions.
Our particular interest is in determining conditions on step size needed for convergence in convex vs. nonconvex case.
At the same time, we present a number of illustrative example, including
a (to our knowledge) novel application to solution of multiplayer games.

\subsection{Gradient descent (GD)}\label{s:GD}
Consider an objective function to be minimized
\be\label{object}
\hbox{\rm $f\in C^2:\R^d \to \R$,
without loss of generality $f\geq 0$.}
\ee
The method of gradient descent (GD) consists of the iteration 
\be\label{GD}\tag{GD}
w_{m+1}= w_m -\alpha_m \nabla f(w_m),
\ee
where $\alpha_m\geq 0$ are step sizes to be chosen depending on the particular implementation of \eqref{GD}.
Here, we will assume always that the sequence $\{\alpha_m \}$ is predetermined, and monotone nonincreasing.

For step size fixed and sufficiently small, we have the following standard convergence result.

\begin{proposition}\label{detprop}
	\label{constGD}
	Assuming the Hessian bound $|\nabla^2 f|\leq L$,	
	and taking $\alpha_j=\alpha \equiv \const$ with $\alpha <2/L$, we have for any solution $\{w_m\}$
	of \eqref{GD} that (i) $f(w_m)$ is monotone decreasing, and (ii) $\nabla f(w_m)\to 0$ as $m\to\infty$.
	If, also, $|f(w)|\to \infty$ as $|w|\to \infty$, then
	(iii) $w_m$ converges as $m\to\infty$ to the set $\mathcal{C}:=\{w:\, \nabla f(w)=0\}$ 
	of critical points of $f$.
\end{proposition}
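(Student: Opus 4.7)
The plan is to proceed in the natural order (i)$\Rightarrow$(ii)$\Rightarrow$(iii), with the key input being the standard ``descent lemma'' coming from Taylor's theorem applied to $f$.

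First, I would establish (i) via the second-order Taylor expansion
\[
f(w_{m+1}) = f(w_m - \alpha \nabla f(w_m)) \leq f(w_m) - \alpha |\nabla f(w_m)|^2 + \tfrac{L}{2}\alpha^2 |\nabla f(w_m)|^2,
\]
which follows from the Hessian bound $|\nabla^2 f|\leq L$ and integration of the Hessian along the segment $[w_m,w_{m+1}]$. Rewriting, one obtains
\[
f(w_{m+1}) \leq f(w_m) - \alpha\bigl(1 - \tfrac{L\alpha}{2}\bigr)|\nabla f(w_m)|^2,
\]
and the factor $(1-L\alpha/2)$ is strictly positive by the assumption $\alpha < 2/L$, yielding monotone decrease.

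For (ii), I would sum the above inequality from $m=0$ to $m=N$ and telescope, obtaining
\[
\alpha\bigl(1-\tfrac{L\alpha}{2}\bigr)\sum_{m=0}^{N}|\nabla f(w_m)|^2 \;\leq\; f(w_0)-f(w_{N+1}) \;\leq\; f(w_0),
\]
where the second bound uses $f\geq 0$. Letting $N\to\infty$ shows $\sum_{m}|\nabla f(w_m)|^2 < \infty$, and in particular $|\nabla f(w_m)|\to 0$.

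For (iii), coercivity $|f(w)|\to\infty$ as $|w|\to\infty$ combined with monotonicity of $f(w_m)$ from (i) confines $\{w_m\}$ to the bounded sublevel set $\{w : f(w)\leq f(w_0)\}$, so the sequence is precompact. I would then argue by contradiction: if $\mathrm{dist}(w_m,\mathcal{C})\not\to 0$, extract a subsequence $w_{m_k}$ bounded away from $\mathcal{C}$ by some $\eps>0$, pass to a further convergent sub-subsequence $w_{m_{k_j}}\to w^*$, and use continuity of $\nabla f$ together with (ii) to conclude $\nabla f(w^*)=0$, contradicting $\mathrm{dist}(w^*,\mathcal{C})\geq \eps$. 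The main obstacle, such as it is, lies in being careful in (iii) that one proves convergence to the \emph{set} $\mathcal{C}$ rather than convergence of $w_m$ to a single critical point, which need not hold without further assumptions such as strict convexity or a \L ojasiewicz-type inequality; the compactness-plus-contradiction argument sidesteps this issue cleanly.
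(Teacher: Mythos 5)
Your proposal is correct and follows essentially the same route as the paper: the descent inequality from Taylor's theorem with the Hessian bound, a telescoping sum combined with $f\geq 0$ to get summability of $|\nabla f(w_m)|^2$, and coercivity plus compactness and continuity of $\nabla f$ for convergence to the critical set. Your treatment of (iii) is merely a more explicit version of the paper's one-line compactness argument.
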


\begin{proof} 
	By Taylor's theorem, with remainder, we have for some $\tilde w$ on line segment $\overline{w_m,w_{m+1}}$:
	\ba\label{base}
		f(w_{m+1})-f(w_m)  &= \nabla f(w_m)(w_{m+1}-w_m) + 
	(w_{m+1}-w_m)^T \nabla^2 f(\tilde w) (w_{m+1}-w_m) \\
		&\leq - \alpha |\nabla f(w_m)|^2+ (L\alpha^2/2) |\nabla f(w_m)|^2\\
		&\leq - \alpha |\nabla f(w_m)|^2(1- L\alpha/2),
		\ea
	which for $\alpha<2/L$ is strictly negative. 
	This establishes (i). Moreover, summing the left and right sides of \eqref{base}
	and noting that the lefthand quantity is a telescoping sum, we find that
	$ f(w_{M})-f(w_1)  = -\alpha (1-L\alpha/2) \sum_{m=1}^{M-1} |\nabla f(w_m)|^2$,
	hence, by positivity of $f$,
	\be\label{basesum}
	\sum_{m=1}^{M-1} |\nabla f(w_m)|^2 \leq \frac{f(w_1)}{ \alpha (1-L\alpha/2) }<\infty.
	\ee
	From convergence of \eqref{basesum}, we then obtain $|\nabla f(w_m)|\to 0$, or (ii).
	Finally, observing that $|f|\to \infty$ as $|w|\to \infty$, together with (i), gives uniform
	boundedness $|w_m| \leq M$ for some $M>0$, 
	we obtain (iii) from (ii) together with continuity of $\nabla f$/compactness
	of $\{w:\, |w|\leq M\}$.
\end{proof} 

\subsection{Stochastic gradient descent (SGD)}\label{s:SGD}
Next, consider the same type of objective function \eqref{object}, augmented with a
{\it stochastic gradient estimator} $\tilde \nabla f(w)$, satisfying
\be\label{enabeq}
E[\tilde \nabla f(w)|w]=\nabla f(w).
\ee
The method of stochastic gradient descent (SGD) consists of the stochastic iteration
\be\label{SGD}\tag{SGD}
w_{m+1}= -\alpha_m \tilde \nabla f(w_m)
\ee
obtained by replacing in \eqref{GD} the exact gradient $\nabla f(w)$ 
by the randomly estimated $\tilde \nabla f(w)$.
Here, the idea is that the gradient estimator should be cheaper to compute than the actual gradient.

\medskip \noindent
\textbf{Canonical example.} A concrete example, from which the method originates, 
is minimization of an objective function in the form of a sum\footnote{Without loss of generality written
as an average.}
\be\label{canform}
f(w)=(1/N)\sum_{i=1}^N f_i(w), 
\ee
with $N$ large. Fixing a ``batch size'' $b\ll N$, we may define the batch sample
\be\label{sample}
\tilde f(w):=(1/b) \sum_{i\in S} f_i(w),
\ee
where the subset $S\subset \{1,\dots, N\}$ is chosen with equal likelihood among samples of size $|S|=b$.
A natural gradient estimator, satisfying \eqref{enabeq} by definition, is then 
\be\label{canest}
\tilde \nabla f(w):= \nabla \tilde f(w).
\ee
Evidently, $\tilde \nabla f$ is considerably cheaper to compute than $\nabla f$; In practice, $m$ may well be $1$.

Such problems arise in statistical estimation and machine learning.
One may think of the functions $f_i(w)$
as measuring ``goodness'' of fit at a data point $i$, under the choice of parameters $w\in \R^d$.
For example, a particularly familiar example is given by the least squares error
\be\label{LS}
f_i(w)= (1/2)|y_i- \phi(x_i, w)|^2,
\ee
where $\phi(\cdot,w)$ is a function fitting data set $(x_i,y_i)$, $i=1,\dots,N$.

In machine learning applications, $w$ corresponds to a choice of weights in a neural net, hence
the choice of variable name $w$.  But, in general, the parameters $w$ could have a variety of interpretations.
Likewise, the function $f$ in \eqref{SGD} need not be of form \eqref{canform}, but only possess
an inexpensive gradient estimator $\nabla f$.
And, this gradient estimator need not correspond as in \eqref{canest} to the gradient of some primitive estimator,
but only satisfy the consistency condition \eqref{enabeq}.

\medskip \noindent
\textbf{Stochastic coordinate descent.} Another standard example, applying to objective functions
of general form $f(w)$, $w\in \R^d$ is stochastic coordinate descent (SCD), in which gradient descent 
is performed randomly in one coordinate direction $w_j$ at a time, that is, taking
\be\label{scd}
\tilde \nabla f(w):=d \sum_{j=1}^d \theta_j (\partial f/\partial w_j)(w),
\ee
where $\theta=(\theta_1, \dots, \theta_d)$ is a random variable equal to one of the standard coordinate directions 
$e_j$ with equal probability $1/d$.
This is particularly helpful if $f$ has a decoupled summation structure $f(w)=\sum_j f_j(w_j)$
or otherwise breaks into blocks with sparse dependence on coordinates of $w$.

\medskip \noindent
\textbf{Assumptions and verification.} 
We shall make in various combinations the assumptions
\be\label{small}
\alpha_j \ll 1,
\ee
\be\label{suminf}
\sum \alpha_m=\infty,
\ee
and
\be\label{sumfin}
\sum \alpha_m^2<\infty.
\ee
on the step size $\alpha_m\geq 0$.

We assume always a uniform Hessian bound 
\be\label{Hess}
|\nabla^2 f(w_m)|\leq L
\ee
on the function $f$, and a uniform variance bound
\be\label{var}
E[ |\tilde \nabla  f(w_m)|^2 -|\nabla f(w_m)|^2]\leq \sigma^2
\ee
on the gradient estimator $\tilde \nabla f$. 

The following straightforward results verify \eqref{Hess}-\eqref{var} for our canonical examples. 

\begin{proposition}\label{aver}
	For $f$ as in \eqref{canform}, $f_i$ uniformly bounded in $C^2$ norm on compact sets and
	$\tilde \nabla f:=\nabla \tilde f$, assumptions \eqref{Hess}-\eqref{var} 
	hold on $\{w:\, |w|\leq K\}$ for any $K>0$.
\end{proposition}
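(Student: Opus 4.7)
I would verify \eqref{Hess} and \eqref{var} separately by elementary linearity/compactness arguments; no serious obstacle is expected.

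For \eqref{Hess}, the first step is to extract from the hypothesis a constant $L_0$ with $|\nabla^2 f_i(w)| \leq L_0$ for all $i \in \{1,\dots,N\}$ and all $|w| \leq K$ (the word ``uniformly'' in the hypothesis provides a single constant valid across all $i$; for finite $N$ this is anyway automatic from individual $C^2$ boundedness of each $f_i$). Since $\nabla^2 f = (1/N)\sum_{i=1}^N \nabla^2 f_i$ by linearity of differentiation, the triangle inequality then gives $|\nabla^2 f(w)| \leq L_0$ uniformly on $\{|w|\leq K\}$, yielding \eqref{Hess} with $L := L_0$.

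For \eqref{var}, the key step is to reduce the left-hand side to a conditional variance. Since $E[\tilde \nabla f(w) \mid w] = \nabla f(w)$ by \eqref{enabeq}, expanding $|\tilde\nabla f - \nabla f|^2 = |\tilde \nabla f|^2 - 2\langle \tilde \nabla f, \nabla f\rangle + |\nabla f|^2$ and taking conditional expectations yields the identity
\[
E\bigl[|\tilde \nabla f(w)|^2 - |\nabla f(w)|^2 \,\big|\, w\bigr] = E\bigl[|\tilde \nabla f(w) - \nabla f(w)|^2 \,\big|\, w\bigr],
\]
so it suffices to bound the conditional second moment of the deviation. Applying the $C^2$ hypothesis now to first derivatives, there is $G > 0$ with $|\nabla f_i(w)| \leq G$ for all $i$ and $|w| \leq K$. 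Since $\tilde \nabla f(w) = (1/b)\sum_{i\in S} \nabla f_i(w)$ and $\nabla f(w) = (1/N)\sum_i \nabla f_i(w)$ are convex combinations of the $\nabla f_i(w)$, each is bounded in norm by $G$, so $|\tilde \nabla f(w) - \nabla f(w)|^2 \leq 4G^2$ almost surely on $\{|w|\leq K\}$. Taking expectations and setting $\sigma^2 := 4G^2$ gives \eqref{var}.

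The proposition is essentially bookkeeping; the only mildly delicate points are the equivalence of the unconditional form \eqref{var} with the conditional form used above, and ensuring the $C^2$ bound is genuinely uniform in $i$. A much sharper $\sigma^2$ could be obtained by writing out the exact sampling-without-replacement variance of the finite-sum estimator (which in particular vanishes when $b = N$), but the crude bound $4G^2$ is already all that \eqref{var} requires and keeps the argument self-contained.
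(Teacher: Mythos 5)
Your proof is correct, and it is precisely the ``straightforward'' verification the paper has in mind: the paper in fact states Proposition \ref{aver} without proof, so there is nothing to compare against beyond the intended routine argument. Both halves check out --- the triangle-inequality bound on $\nabla^2 f=(1/N)\sum_i\nabla^2 f_i$ for \eqref{Hess}, and the reduction of the left side of \eqref{var} to the conditional variance $E[|\tilde\nabla f-\nabla f|^2\mid w]$ via \eqref{enabeq} followed by the crude uniform bound $4G^2$ on the compact set --- and your flagged caveats (uniformity of the $C^2$ bound in $i$, and passing from the conditional to the unconditional form of \eqref{var}) are handled adequately.
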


\begin{corollary}\label{acor}
	For $f$ as in \eqref{canform} and $f_i$ as in \eqref{LS}, with $\phi \in C^2$,
	and $\tilde \nabla f:=\nabla \tilde f$, assumptions \eqref{Hess}-\eqref{var} 
	hold on $\{w:\, |w|\leq K\}$ for any $K>0$.
\end{corollary}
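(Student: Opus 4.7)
The plan is to reduce Corollary \ref{acor} immediately to Proposition \ref{aver} by checking that for the least-squares choice $f_i(w) = (1/2)|y_i - \phi(x_i,w)|^2$ with $\phi \in C^2$, the family $\{f_i\}$ is uniformly bounded in $C^2$ norm on every compact set $\{w:|w|\leq K\}$; once this is done, \eqref{Hess}--\eqref{var} follow by direct application of the preceding proposition with no further work.

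First I would compute, via the chain rule,
\[
\nabla f_i(w) = -(\nabla_w \phi(x_i,w))^T \big(y_i - \phi(x_i,w)\big),
\]
and
\[
\nabla^2 f_i(w) = (\nabla_w \phi(x_i,w))^T (\nabla_w \phi(x_i,w)) - \big(y_i - \phi(x_i,w)\big)^T \nabla_w^2 \phi(x_i,w),
\]
so that bounds on $f_i$, $\nabla f_i$, and $\nabla^2 f_i$ reduce to bounds on $\phi$, $\nabla_w\phi$, $\nabla_w^2\phi$ at the points $(x_i,w)$, together with boundedness of the residual $y_i - \phi(x_i,w)$. Since $i$ ranges over a \emph{finite} index set $\{1,\dots,N\}$, the data $\{x_i\}$ and $\{y_i\}$ lie in a bounded set; combined with $w$ restricted to $\{|w|\leq K\}$ and the hypothesis $\phi \in C^2$, continuity gives uniform-in-$i$ bounds on each of the quantities above on $\{w:|w|\leq K\}$. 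This is the $C^2$-uniform bound on compacta required by Proposition \ref{aver}.

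With that verification in hand, Proposition \ref{aver} applied to $f = (1/N)\sum_i f_i$ and $\tilde \nabla f = \nabla \tilde f$ yields \eqref{Hess} and \eqref{var} on $\{w:|w|\leq K\}$, completing the proof. There is no real obstacle here: the finiteness of the data set makes the passage from pointwise $C^2$ smoothness of $\phi$ to a uniform-in-$i$, locally uniform-in-$w$ $C^2$ bound on the $f_i$ essentially automatic; the only mild care needed is in keeping track of which arguments of $\phi$ the derivatives act on in the chain rule computation.
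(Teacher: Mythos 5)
Your proposal is correct and follows exactly the intended route: the paper states this as an immediate corollary of Proposition \ref{aver} (offering no separate written proof), and your verification that the least-squares $f_i$ of \eqref{LS} are uniformly bounded in $C^2$ norm on $\{w:|w|\leq K\}$ — via the chain-rule formulas for $\nabla f_i$, $\nabla^2 f_i$ and finiteness of the data set $\{(x_i,y_i)\}_{i=1}^N$ — is precisely the reduction needed to invoke that proposition. No gaps.
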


\begin{proposition}\label{averscd}
	For $f$ uniformly bounded in $C^2$ norm on compact sets and $\tilde \nabla f$ as in \eqref{scd},
	assumptions \eqref{Hess}-\eqref{var} hold on $\{w:\, |w|\leq K\}$ for any $K>0$.
\end{proposition}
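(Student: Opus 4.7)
The plan is to verify the two assumptions \eqref{Hess} and \eqref{var} directly from the explicit formula \eqref{scd} together with the uniform $C^2$ control of $f$ on the ball $\{|w|\leq K\}$. The Hessian bound \eqref{Hess} comes essentially for free: the hypothesis provides a constant $L>0$ with $|\nabla^2 f(w)|\leq L$ on $\{|w|\leq K\}$, and as a byproduct $|\nabla f(w)|\leq L'$ on the same set. The gradient bound will be reused in the variance estimate.

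For \eqref{var}, I would start from the definition of $\theta$, which equals each coordinate direction $e_k$ with probability $1/d$. Conditional on $\theta=e_k$, the random vector $\tilde \nabla f(w)$ has only its $k$-th coordinate nonzero, with value $d\,(\partial f/\partial w_k)(w)$, so its squared norm is $d^2\,(\partial f/\partial w_k)(w)^2$. Averaging over $k$ with weights $1/d$ yields
\be
E[|\tilde \nabla f(w)|^2 \mid w] \;=\; d\sum_{k=1}^d (\partial f/\partial w_k)(w)^2 \;=\; d\,|\nabla f(w)|^2,
\ee
hence $E[|\tilde \nabla f(w)|^2 - |\nabla f(w)|^2\mid w]=(d-1)|\nabla f(w)|^2 \leq (d-1)(L')^2$ on $\{|w|\leq K\}$. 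Taking the outer expectation preserves the bound, and setting $\sigma^2:=(d-1)(L')^2$ gives \eqref{var}. As a side check, the same averaging argument yields $E[\tilde \nabla f(w)\mid w]=d\sum_k (1/d)(\partial f/\partial w_k)(w)\, e_k =\nabla f(w)$, confirming the consistency condition \eqref{enabeq} baked into the definition.

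I do not anticipate a genuine obstacle. The only point of care is parsing \eqref{scd} as a vector identity, so that $\tilde \nabla f(w)$ is a random vector pointing in a single, uniformly chosen coordinate direction with magnitude $d$ times the corresponding partial derivative. Once this is in hand, both assumptions reduce to elementary one-line averages over the $d$ equally likely outcomes of $\theta$, and no inequality beyond the pointwise $C^2$ bound on $f$ is needed.
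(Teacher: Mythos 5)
Your proof is correct and is exactly the ``straightforward'' verification the paper omits: the identity $E[|\tilde\nabla f(w)|^2\mid w]=d\,|\nabla f(w)|^2$ you derive is precisely the one the paper itself uses later in the proof of Proposition \ref{suffpropscd}, and combined with the uniform $C^1$/$C^2$ bounds on the compact ball it gives \eqref{Hess} and \eqref{var} with $\sigma^2=(d-1)\sup_{|w|\leq K}|\nabla f(w)|^2$. Your reading of \eqref{scd} as a random vector supported on a single uniformly chosen coordinate direction is the intended one, as confirmed by the consistency condition \eqref{enabeq}.
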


\br
Proposition \ref{aver} (Corollary \ref{acor}), verify \eqref{Hess}-\eqref{var} on bounded sets.
To apply our analysis below, based on these assumptions, it is thus sufficient to show that
$w_m$ remains bounded almost surely.  For example, this follows for \eqref{canform} under the auxiliary assumption
\be\label{auxcond}
\hbox{\rm
$w \cdot \tilde \nabla f_i(w)\geq \theta |w|^2$ for all $i$, some $\theta>0$ and $|w|$ sufficiently large,}
\ee
and for \eqref{scd} under 
\be\label{auxcondscd}
\hbox{\rm
$w_i (\partial f/\partial_{w_i}) \geq \theta |w_i|^2$ for all $i$, some $\theta>0$ and $|w_i|$ sufficiently large.}
\ee
\er

\subsection{Main results}
Our main conclusions are the following two generalizations of Proposition \ref{detprop} to the stochastic case.
The first concerns general, possibly nonconvex and the second convex or ``convex-like'' $f$.

\begin{theorem}\label{main}
	For nonnegative $f\in C^2$ and $\alpha_m$ satisfying \eqref{small}-\eqref{var}, 
	we have for any random variables $\{w_m\}$ satisfying \eqref{SGD} that
	(i) $E[f(w_m)]\to \liminf_{m\to \infty} E[f(w_m)]$ as $m\to \infty$, and
	(ii) $E[|\nabla f(w_m)|^2]\to 0$ as $m\to\infty$.
	If, also, $|f(w)|\to \infty$ as $|w|\to \infty$, then
	(iii) $w_m$ converges in probability to the set $\mathcal{C}:=\{w:\, \nabla f(w)=0\}$ 
	of critical points of $f$.
\end{theorem}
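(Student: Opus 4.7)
The plan is to mimic the Taylor-expansion argument of Proposition \ref{detprop}, substituting the stochastic update $w_{m+1}-w_m=-\alpha_m\tilde\nabla f(w_m)$ and then taking expectations. Writing $F_m:=E[f(w_m)]$ and $g_m:=E[|\nabla f(w_m)|^2]$, Taylor's theorem with remainder together with the Hessian bound \eqref{Hess} gives
\[
f(w_{m+1})-f(w_m)\leq -\alpha_m\,\nabla f(w_m)\cdot\tilde\nabla f(w_m)+(L\alpha_m^2/2)|\tilde\nabla f(w_m)|^2.
\]
Taking expectations, applying the tower property and unbiasedness \eqref{enabeq} to the first term and the variance bound \eqref{var} to the second, I arrive at
\[
F_{m+1}-F_m\leq -\alpha_m(1-L\alpha_m/2)g_m+(L\sigma^2/2)\alpha_m^2\leq -(\alpha_m/2)g_m+(L\sigma^2/2)\alpha_m^2,
\]
the last step using \eqref{small}.

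For (i), observe that $\Phi_m:=F_m+(L\sigma^2/2)\sum_{k\geq m}\alpha_k^2$ is monotone nonincreasing and bounded below by $0$, hence has a limit; since $\sum\alpha_k^2<\infty$ by \eqref{sumfin}, so does $F_m$, which gives (i). Summing the basic inequality from $1$ to $M$, telescoping, and using $F_{M+1}\geq 0$ produces $\sum_{m=1}^{\infty}\alpha_m g_m\leq 2F_1+L\sigma^2\sum_m\alpha_m^2<\infty$, and together with \eqref{suminf} this forces $\liminf_{m\to\infty}g_m=0$.

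The main obstacle is upgrading $\liminf g_m=0$ to $\lim g_m=0$ for (ii); for this I would prove and apply a ``slowly varying'' lemma. The requisite continuity estimate follows from the Hessian bound, via $|\nabla f(w_{m+1})-\nabla f(w_m)|\leq L\alpha_m|\tilde\nabla f(w_m)|$, which upon squaring, taking expectations, applying Cauchy--Schwarz and \eqref{var}, and using uniform boundedness of $g_m$ (which comes from the Hessian bound together with almost-sure boundedness of the iterates, either assumed directly as in the Remark or implied by the coercivity hypothesis needed for (iii)), yields $|g_{m+1}-g_m|\leq C\alpha_m$ for some constant $C$. The lemma asserts that a nonnegative sequence with $|g_{m+1}-g_m|\leq C\alpha_m$, $\sum\alpha_m g_m<\infty$, and $\sum\alpha_m=\infty$ must satisfy $g_m\to 0$. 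Proof by contradiction: if $g_{m_k}\geq\epsilon$ along a subsequence, let $n_k>m_k$ be minimal with $g_{n_k}<\epsilon/2$ (which exists since $\liminf g_m=0$); then by the continuity bound $\sum_{j=m_k}^{n_k-1}\alpha_j\geq\epsilon/(2C)$, so each excursion contributes at least $\epsilon^2/(4C)$ to $\sum\alpha_m g_m$, contradicting finiteness.

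For (iii), Chebyshev's inequality together with (ii) gives $\nabla f(w_m)\to 0$ in probability. Coercivity $|f|\to\infty$ combined with uniform boundedness of $F_m$ from (i) and Markov's inequality yields tightness: $P(|w_m|\geq R)\leq F_m/\inf_{|w|\geq R}f(w)\to 0$ as $R\to\infty$, uniformly in $m$. Given $\delta,\eta>0$, choose $R$ with $P(|w_m|\geq R)<\eta/2$ for every $m$; by continuity and compactness, $\inf\{|\nabla f(w)|:|w|\leq R,\ \dist(w,\mathcal{C})\geq\delta\}=:\epsilon>0$, and Chebyshev with (ii) gives $P(|\nabla f(w_m)|\geq\epsilon)<\eta/2$ for $m$ sufficiently large. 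Adding these bounds produces $P(\dist(w_m,\mathcal{C})\geq\delta)<\eta$, which is convergence in probability of $w_m$ to $\mathcal{C}$.
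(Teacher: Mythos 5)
Your overall architecture is sound, and your derivation of the basic one-step inequality $F_{m+1}-F_m\leq -(\alpha_m/2)g_m+(L\sigma^2/2)\alpha_m^2$ is exactly the paper's Lemma \ref{keylem}. Parts (i) and (iii) are correct; in fact your treatment of (i) via the monotone auxiliary quantity $\Phi_m=F_m+(L\sigma^2/2)\sum_{k\geq m}\alpha_k^2$ is a little cleaner than the paper's two-sided Cauchy argument, and your tightness argument for (iii) supplies details the paper leaves implicit behind ``follows as in the deterministic case.'' For the crux, part (ii), you take a genuinely different route: the paper fixes $\eps>0$, partitions time into batches $[m_j+1,m_{j+1}]$ with $\eps/2<\sum\alpha_m\leq\eps$, and uses the Hessian bound plus Jensen's inequality to show $E[|\nabla f(w_n)-\nabla f(w_m)|^2]=O(\eps)$ within a batch, combining this with the batchwise infimum bound. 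You instead prove a discrete Lipschitz estimate $|g_{m+1}-g_m|\leq C\alpha_m$ and run an excursion-counting contradiction against $\sum\alpha_m g_m<\infty$ --- essentially the Robbins--Monro/Bertsekas--Tsitsiklis style argument, and the stochastic analogue of the paper's ``alternative proof'' of Proposition \ref{suffprop}. Your lemma is a clean, self-contained real-analysis fact and avoids the batching bookkeeping; the price is that it needs a uniform bound on $g_m$, which the paper's batching argument does not require.

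That uniform bound is where your proof has a genuine gap as written. You justify $\sup_m g_m<\infty$ by ``almost-sure boundedness of the iterates, either assumed directly as in the Remark or implied by the coercivity hypothesis.'' Neither works: a.s.\ boundedness is not among the hypotheses of Theorem \ref{main} (the Remark's conditions \eqref{auxcond}--\eqref{auxcondscd} are extra assumptions), and coercivity does \emph{not} imply pathwise boundedness in the stochastic case, because the pathwise monotone decrease of $f(w_m)$ that drives the deterministic argument of Proposition \ref{detprop} is lost --- one only gets tightness from the expectation bounds. Moreover, even a.s.\ boundedness path-by-path would not give a bound on $g_m$ uniform over the probability space. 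The fix is elementary and stays within the stated hypotheses: for nonnegative $f\in C^2$ with $|\nabla^2 f|\leq L$ one has the standard self-bounding inequality $|\nabla f(w)|^2\leq 2Lf(w)$ (evaluate the Taylor expansion at $w-\nabla f(w)/L$ and use $f\geq 0$), whence $g_m\leq 2LF_m\leq 2L\bigl(F_1+(L\sigma^2/2)\sum_k\alpha_k^2\bigr)<\infty$ uniformly in $m$ by \eqref{sumfin}. (This, like the rest of the argument, reads \eqref{Hess} as a bound on the Hessian over the relevant region rather than only at the iterates themselves.) With that substitution your Cauchy--Schwarz step gives $|g_{m+1}-g_m|\leq L\alpha_m\sqrt{g_m+\sigma^2}\,(\sqrt{g_{m+1}}+\sqrt{g_m})\leq C\alpha_m$, and the excursion lemma then closes the proof of (ii).
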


We can say much more for functions $f$ satisfying the ``approximate convexity'' condition
\be \label{acon}
|f(w)|\sim |\nabla f(w)|^2 \sim |w|^2.
\ee
This includes uniformly convex functions with bounded Hessian and minimum value zero.

\begin{proposition}\label{main2}
	For nonnegative $f\in C^2$ and nonincreasing $\alpha_m$ satisfying \eqref{small}, \eqref{suminf}, 
	\eqref{Hess}, \eqref{var}, and approximate convexity, \eqref{acon}, 
	plus $\lim_{m\to \infty}\alpha_m=0$,
	any random variables $\{w_m\}$ satisfying \eqref{SGD} with $E[f(w_1)]$ finite converge in probability
	to $0$ as $m\to \infty$, with also 
	\be\label{concon}
	\hbox{\rm $E[|w_m|^2]$, $E[f(w_m)]$ and $E[\nabla f(w_m)|^2]\to 0$.}
	\ee
\end{proposition}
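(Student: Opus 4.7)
The plan is to run the second-moment argument of Proposition \ref{detprop} in expectation, with the one crucial upgrade that the approximate convexity \eqref{acon} converts the pointwise coercivity of $|\nabla f|^2$ into coercivity of $f$ itself, letting us close a linear recursion on $a_m := E[f(w_m)]$.

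First, I would Taylor-expand $f(w_{m+1})-f(w_m)$ exactly as in \eqref{base}, with $w_{m+1}-w_m=-\alpha_m\tilde\nabla f(w_m)$, and condition on $w_m$. Using $E[\tilde\nabla f(w_m)\mid w_m]=\nabla f(w_m)$ together with the Hessian bound \eqref{Hess} and variance bound \eqref{var} gives, after taking full expectation,
\begin{equation}\label{planrec}
E[f(w_{m+1})] \le E[f(w_m)] - \alpha_m(1-L\alpha_m/2)\,E[|\nabla f(w_m)|^2] + (L\sigma^2/2)\,\alpha_m^2.
\end{equation}
This is the analogue of \eqref{base} but with a quadratic defect coming from the variance of the estimator.

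Next I would invoke approximate convexity \eqref{acon}, which (in its lower-bound direction) furnishes a constant $c>0$ with $|\nabla f(w)|^2 \ge c\,f(w)$, and also $f(w)\ge c|w|^2$. Plugging this into \eqref{planrec} and using $\alpha_m\to 0$ to absorb the $L\alpha_m/2$ factor, one gets, for all $m$ beyond some $m_0$,
\begin{equation}\label{planrec2}
a_{m+1} \le (1 - (c/2)\alpha_m)\,a_m + (L\sigma^2/2)\,\alpha_m^2.
\end{equation}
The starting value $a_1=E[f(w_1)]$ is finite by hypothesis, so the recursion is well-posed.

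The main step, and the only nonroutine one, is then to show that \eqref{planrec2} together with $\sum\alpha_m=\infty$ and $\alpha_m\to 0$ forces $a_m\to 0$. I would prove this as a short lemma: writing $b_m:=(c/2)\alpha_m\in[0,1]$ for large $m$ and $c_m:=(L\sigma^2/2)\alpha_m^2$, we have $c_m/b_m=(L\sigma^2/c)\alpha_m\to 0$; iterating \eqref{planrec2} from any index $k$ gives
\begin{equation}\label{planiter}
a_m \le a_k \prod_{j=k}^{m-1}(1-b_j) + \sum_{j=k}^{m-1} c_j \prod_{i=j+1}^{m-1}(1-b_i),
\end{equation}
and since $\sum b_j=\infty$ the first term tends to $0$ in $m$, while the tail sum is bounded by $\sup_{j\ge k}(c_j/b_j)$ times $\sum_{j=k}^{m-1}b_j\prod_{i=j+1}^{m-1}(1-b_i)\le 1$; choosing $k$ so that $\sup_{j\ge k}c_j/b_j<\varepsilon$ yields $\limsup a_m\le\varepsilon$. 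Hence $a_m=E[f(w_m)]\to 0$. This Robbins--Siegmund-type step is where all the work lives; the rest is bookkeeping.

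Finally I would wrap up using approximate convexity in the upper-bound direction: $|w|^2\lesssim f(w)$ and $|\nabla f(w)|^2\lesssim f(w)$ give at once $E[|w_m|^2]\to 0$ and $E[|\nabla f(w_m)|^2]\to 0$, which is \eqref{concon}. Convergence of $w_m$ to $0$ in probability then follows from Markov's inequality $P(|w_m|>\varepsilon)\le E[|w_m|^2]/\varepsilon^2$. The only subtle point is the telescoping-product estimate in \eqref{planiter}; the reliance on \eqref{var} being a uniform bound (not one only valid on compact sets) is what lets us avoid any separate a priori boundedness argument of the kind alluded to in the remark after Proposition \ref{averscd}.
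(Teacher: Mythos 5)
Your proposal is correct and follows the paper's proof almost exactly: both reduce, via Lemma \ref{keylem} (the expectation version of the Taylor estimate) and the lower bound $|\nabla f|^2\gtrsim f$ from \eqref{acon}, to the linear recursion $a_{m+1}\le(1-c\alpha_m)a_m+C\alpha_m^2$, and both solve it by discrete Duhamel to arrive at the representation \eqref{planiter} (the paper's \eqref{Frep}). The homogeneous term is killed by \eqref{suminf} in both arguments. The one place where you genuinely diverge is the estimate of the forced (Duhamel) sum: the paper introduces a $C^1$ extension $\alpha(\cdot)$ of the step sizes to the real line, replaces the sum by an integral via the integral test, and integrates by parts, using $\alpha(m)\to 0$ to conclude. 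You instead use the purely discrete telescoping identity
$\sum_{j=k}^{m-1} b_j \prod_{i=j+1}^{m-1}(1-b_i) = 1-\prod_{i=k}^{m-1}(1-b_i)\le 1$
together with $c_j/b_j=(L\sigma^2/c)\,\alpha_j\to 0$, which gives $\limsup_m a_m\le\varepsilon$ for every $\varepsilon>0$. Your version is more elementary and avoids the continuous interpolation and the monotonicity/derivative bound $|\alpha'|\le K$ that the paper's integration by parts quietly relies on; it buys a cleaner and slightly more general argument (only $\alpha_m\to 0$ and $\sum\alpha_m=\infty$ are used at this step, not monotonicity). The concluding deductions of \eqref{concon} and convergence in probability via Chebyshev/Markov are the same in both treatments.
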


The conditions \eqref{suminf} and $\lim_{m\to \infty}\alpha_m=0$ are sharp, as  
seen by the explicit (convex) example of Section \ref{s:simple}, Remark \ref{sharprmk}.
Indeed, \eqref{suminf} (but not $\lim_{m\to \infty}\alpha_m=0$)
is necessary even in the deterministic case as shown in Proposition \ref{necprop} below,
while $\alpha_m$ quantifies the size of stochastic effects, hence gives a lower bound on accuracy
of approximations if $\lim_{m\to \infty}\alpha_m\neq 0$.
It is not clear whether \eqref{sumfin} is sharp in the nonconvex case or a technical assumption.
Based on our experiments with simple models, we expect it is a technical assumption.
A partial result in the absence of \eqref{sumfin} is given in Proposition \ref{rvprop} below.
In the special case of stochastic coordinate descent, we show in Proposition \ref{suffpropscd}
convergence assuming only $\sum_j \alpha_j=\infty$ and $\alpha_m$ sufficiently small, similary 
as in the deterministic case.

The simple proofs of Theorem \ref{main} and Proposition \ref{main2} at least
appear to be new, and perhaps the results as well- we make no claim to novelty in the latter regard.
We give in Sections \ref{s:egs} and \ref{s:FP} some simple examples illustrating and further illuminating
these conclusions, based in part on explicit solutions and in part on numerical Monte Carl and Fokker-Planck
approximations.

\medskip
{\bf Application to games.} In Section \ref{s:games}, we describe an (SGD) approach to 
2-player games, and $(n-1)$ vs. $1$-player games with asynchronous coalition as defined in \cite{BBDJZ},
based on $\ell^p$ smoothing of the maximum function, and carry out numerical experiments for some simple examples.

\subsection{Time-averaging and adaptive step size}
Finally, we mention two interesting and frequently used variants improving performance.
The first, sidestepping the technical issues above while further stabilizing (SGD), 
consists in ``filtering'', or time-averaging the output of the basic algorithm \eqref{SGD}.
Namely, saving the outputs at intermediate steps, define at step $m$ an averaged variable $z_m$
taking values $w_j$, $j=1,\dots, m$ with probabilities
\be\label{aprob}
P(z_m=w_j)= \frac{\alpha_j}{\sum_{j=1}^m \alpha_j}, \qquad j=1,\dots, m.
\ee
Then, we have the following standard result,\footnote{See for example, lecture notes \cite{Co}.}
not requiring \eqref{sumfin}, valid for general (nonconvex) $f$.

\begin{proposition}[Co,SGD]\label{avprop}
	For nonnegative (possibly nonconvex) $f\in C^2$ and nonincreasing $\alpha_m$ 
	satisfying \eqref{small}, \eqref{suminf}, 
	\eqref{Hess}, \eqref{var}, and $\lim_{m\to \infty}\alpha_m=0$,
	and random variables $\{w_m\}$ satisfying \eqref{SGD}, $z_m$ defined
	in \eqref{aprob} satisfies 
	\be\label{averaged}
	\hbox{\rm $ E[|\nabla f(z_m)|^2]\to 0$ as $m\to\infty$.}
	\ee
\end{proposition}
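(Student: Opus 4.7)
The plan is to reduce the claim to a weighted Ces\`aro-type estimate. By the definition of the law of $z_m$ in \eqref{aprob},
\[
E[|\nabla f(z_m)|^2] = \frac{\sum_{j=1}^m \alpha_j \, E[|\nabla f(w_j)|^2]}{\sum_{j=1}^m \alpha_j},
\]
so it suffices to show that this weighted average tends to $0$. Contrast with Theorem~\ref{main}, which requires pointwise control of $E[|\nabla f(w_m)|^2]$; the averaged formulation needs only control of the weighted sum $\sum_m \alpha_m E[|\nabla f(w_m)|^2]$, which is precisely why the square-summability hypothesis \eqref{sumfin} becomes unnecessary here.

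I would proceed by repeating the Taylor expansion of Proposition~\ref{detprop} for a single step of \eqref{SGD}, then taking conditional expectation with respect to $w_m$ and applying the Hessian bound \eqref{Hess}, the variance bound \eqref{var}, and the consistency \eqref{enabeq}. This yields the standard descent-with-noise inequality
\[
E[f(w_{m+1}) \mid w_m] \leq f(w_m) - \alpha_m(1 - L\alpha_m/2)|\nabla f(w_m)|^2 + \tfrac{L\sigma^2}{2}\alpha_m^2.
\]
Taking total expectation, invoking \eqref{small} so that $1 - L\alpha_m/2 \geq 1/2$, summing from $m=1$ to $M$ (the left-hand side telescopes), and discarding $E[f(w_{M+1})] \geq 0$ on the left, one obtains the key estimate
\[
\tfrac{1}{2}\sum_{m=1}^M \alpha_m E[|\nabla f(w_m)|^2] \leq E[f(w_1)] + \tfrac{L\sigma^2}{2}\sum_{m=1}^M \alpha_m^2.
\]

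Dividing by $\sum_{m=1}^M \alpha_m$ converts the left-hand side into $\tfrac{1}{2} E[|\nabla f(z_M)|^2]$, so the claim reduces to showing that
\[
\frac{E[f(w_1)]}{\sum_{m=1}^M \alpha_m} \to 0 \quad\text{and}\quad \frac{\sum_{m=1}^M \alpha_m^2}{\sum_{m=1}^M \alpha_m} \to 0 \quad\text{as } M \to \infty.
\]
The first is immediate from \eqref{suminf} since $E[f(w_1)]$ is a fixed finite quantity.

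The only step that genuinely uses the weakening of hypothesis (replacing \eqref{sumfin} by the assumption $\lim \alpha_m = 0$) is the second ratio, and this is where I expect the main---if quite modest---obstacle to sit. It is handled by a Ces\`aro-weighting argument: given $\epsilon > 0$, choose $N$ large enough that $\alpha_m < \epsilon$ for all $m \geq N$; then
\[
\frac{\sum_{m=1}^M \alpha_m^2}{\sum_{m=1}^M \alpha_m} \leq \frac{\sum_{m=1}^{N-1} \alpha_m^2}{\sum_{m=1}^M \alpha_m} + \frac{\epsilon \sum_{m=N}^M \alpha_m}{\sum_{m=1}^M \alpha_m} \leq \frac{\sum_{m=1}^{N-1} \alpha_m^2}{\sum_{m=1}^M \alpha_m} + \epsilon,
\]
and the first summand tends to $0$ as $M \to \infty$ by \eqref{suminf}, its numerator being fixed. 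Since $\epsilon$ was arbitrary, the ratio tends to $0$, which completes the proof of \eqref{averaged}.
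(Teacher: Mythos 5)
Your proposal is correct and follows essentially the same route as the paper: the paper's proof likewise divides the summed descent inequality (its Lemma \ref{keylem}, estimate \eqref{keysum}, which you rederive rather than cite) by $\sum_{j=1}^m\alpha_j$ and observes that the result tends to zero provided $\sum_j\alpha_j^2/\sum_j\alpha_j\to 0$. The only difference is that you spell out the Ces\`aro-weighting step showing this ratio vanishes under $\alpha_m\to 0$, which the paper asserts without detail.
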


See Remark \ref{comprmk} comparing the argument of Proposition \ref{avprop} to that of Theorem \ref{main}.
The second variant is the use of adaptive time steps \cite{A}, as is important even in the deterministic case for
faster convergence. We do not treat this, as out of our present scope. See, for example, \cite{LO}.

\subsection{Discussion and open problems}
The investigations recorded in this note were motivated by a desire to apply stochastic gradient
descent techniques originating in machine learning/statistical estimation \cite{RM} 
to the study of large nonconvex optimization problem arising in ``asynchronous multiplayer games'' \cite{BBDJZ}
or other such general applications.
As such, our approach is from a naive perspective, abstracting general aspects 
that might be applied in a wider setting.

Our main theoretical results show that square summability of step size $\alpha_m$ is sufficient for convergence to
the set of critical points without ``filtering'', or time-averaging, and in a number of cases 
just $\alpha_m \to 0$ as $m\to \infty$. It is a very interesting question whether the latter might suffice
in all cases, both mathematically and practically: more generally, whether mere sufficient smallness
of $\alpha_m$ might imply convergence to an arbitrarily small neighborhood of the critical set.
For, as demonstrated in our experiments of Section 
\ref{s:gexp}, a small constant step size appears more convenient and effective in practice.
Likewise it is interesting to know when filtering may be dispensed with.

Our main application is an adaptation to two- and multiplayer games or more generally any minimax problem
$ \min_{y\in Y\subset \R^d} \max_{0\leq j\leq N}\{\phi_j(y)\}$, $\phi_j\geq \theta>0$,
noting that this may be smoothly approximated as $\min_{y\in Y\subset \R^d} \|\phi\|_{\ell^p}$,
$p\gg 1$.
Minimizing instead $\|\phi\|_{\ell^p}^p= \sum_j |\phi_j|^p$,
we convert to a problem of form \eqref{canform} to which (SGD) and or (SCD) may be applied.
Our experiments in Section \ref{s:gexp} show good performance for small example problems and 
reasonably sized smoothing exponent $p$.
As we discuss there, the success of this method for large problems would appear to 
require further elaboration such as multi-grid iteration/rescaling of payoff functions as $p$ is increased.
Nonetheless, it seems an intriguing variation in the direction of interior point methods with iterated smoothing.

We note that for problems \eqref{canform} arising in machine learning, the index $i$ in $f(w)=\sum_i f_i(w)$
represents instances of a training set, and the coordinates $w_j$ of $w$ weights in a neural net.
For deep learning applications, the number of weights typically ranges from one tenth to one times the size 
of the training set, with one-tenth considered somewhat optimal under the ``rule of ten''.
For a classical two-player game, the dimension $N$ of $w$ is equal to the number of elements $\phi_j$.
For a three-player asynchronous game on the other hand, as described in Section \ref{s:games},
the dimension of $w$ is typically $N^2$, whereas the number of $f_i$ in $\sum_i f_i$ is $N$, with $N\gg 1$.
Thus, it is in a rather different regime, what would be ``overfitting'' (``or undertraining'') 
in the context of machine learning.
The implications of this discrepancy for performance in our context is an interesting open question.

Finally, we note that our convergence results do not distinguish between global and local minima, or 
saddlepoints, and indeed our investigations in Section \ref{s:nonconvex} show for simple examples
that (SGD) may be trapped with nonzero probability at local minima.
Incorporation of annealing or multigrid methods, though out of our current scope, may be expected to
be extremely important for treatment of large games or other applications, as is the neglected topic
of adaptive step size.
The treatment of large multiplayer games in particular seems a very interesting problem for further
investigation.


\medskip\noindent
\textbf{Acknowledgement} These notes are the product of a reading group carried out in parallel with an 
REU project on optimal asynchronous coalitions in multi-player games,\footnote{Examples of nonconvex 
optimization problems \cite{BBDJZ}.} supported by NSF grant number DMS-2051032 (REU).
We thank the National Science Foundation and Indiana University for their infrastructural support.
Thanks also to L. Miguel Rodrigues for a helpful conversation regarding physical connections/background
to do with Fokker-Planck approximation.
All code for this paper is done in python using standard packages, and can be found at \href{https://github.com/kevmbuck/SGD}{https://github.com/kevmbuck/SGD}.

\section{Variable step-size deterministic case}\label{s:det}
We start by studying convergence of the deterministic gradient descent algorithm \eqref{GD}
with variable step size, under assumptions \eqref{suminf} and \eqref{Hess}, together with \eqref{small}.

\subsection{Continuous-time analog}\label{s:cont}
It is instructive to consider the analogous continuous gradient descent flow
\be\label{contGD}
\dot w(t)= -\alpha(t)\nabla f(w(t)),
\ee
with varying rate $\alpha(t)$.
From the computation $ \dot f(w)= -\alpha(t)|\nabla f(w(t))|^2, $
we obtain, integrating in time, the standard energy estimate
$$
\int_0^T \alpha(t) |\nabla f(w(t))|^2 dt= F(0)-F(T)\leq F(0),
$$
giving an averaged decay result for $|\nabla f(w))|$ so long as 
\be\label{continf}
\int_0^{+\infty} \alpha(t) dt=\infty.
\ee

Indeed, by the change of time-coordinate $dt/d\tau= 1/\alpha(t)$, we may convert \eqref{contGD} to the 
constant-rate case
$$
d w/d\tau = -\nabla f(w(\tau)), \qquad  \tau\in [0, \int_0^\infty (1/\alpha(t)) dt].
$$
From this, we see immediately that \eqref{continf} is necessary for convergence to equlibrium, for
which $\tau$ must go to infinity.
Likewise, the resulting energy estimate $ \int_0^T  |\nabla f(w(\tau))|^2 dt\leq F(0)$
gives $|\nabla f(w)|\to 0$ as $t$ (hence also $\tau$) goes to infinity, under standard mild conditions
giving also control of $|(d/dt)\nabla f(w)|$.
These observations give a useful guide to the discrete case as well. In particular, the idea of time
rescaling may be seen to underly our ultimate proof of (discrete) convergence.

\subsection{Necessity}\label{s:nec}
We first address necessity of our conditions, for which we obtain readily the following definitive result,
applying to general $f$, not necessarily convex.

\begin{proposition}\label{necprop}
For $f\in C^2$ and $\alpha_m$ satisfying \eqref{Hess} and \eqref{small}, condition \eqref{suminf} is necessary in
order that $|\nabla f(w_m)|\to 0$ as $m\to \infty$ for all solutions of \eqref{GD} such that $\nabla f(w_1)\neq 0$.
\end{proposition}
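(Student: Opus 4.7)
The plan is to prove the contrapositive: assuming $\sum_m \alpha_m < \infty$, I will exhibit an initial point $w_1$ (indeed any $w_1$) with $\nabla f(w_1) \neq 0$ for which $|\nabla f(w_m)|$ stays bounded away from zero. The central observation is that the Hessian bound \eqref{Hess} makes $\nabla f$ Lipschitz with constant $L$ on line segments $\overline{w_m, w_{m+1}}$, and since $|w_{m+1}-w_m| = \alpha_m|\nabla f(w_m)|$ along a (GD) trajectory, this directly couples the magnitudes of consecutive gradients.

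Concretely, Lipschitz continuity gives
\[
|\nabla f(w_{m+1}) - \nabla f(w_m)| \leq L|w_{m+1}-w_m| = L\alpha_m |\nabla f(w_m)|,
\]
and the reverse triangle inequality then yields the one-step lower bound
\[
|\nabla f(w_{m+1})| \geq (1 - L\alpha_m)|\nabla f(w_m)|.
\]
Under \eqref{small} each factor $1 - L\alpha_m$ is positive (say $\geq 1/2$), so iterating produces
\[
|\nabla f(w_m)| \geq |\nabla f(w_1)| \prod_{j=1}^{m-1} (1 - L\alpha_j).
\]

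The key step is to show that this product has a positive limit: using $-\log(1-x) \leq 2x$ for $0 \leq x \leq 1/2$ (which is valid by \eqref{small} for $x = L\alpha_j$), we have
\[
\prod_{j=1}^{\infty}(1 - L\alpha_j) \geq \exp\!\Bigl(-2L\sum_{j=1}^\infty \alpha_j\Bigr) > 0
\]
precisely because $\sum_j \alpha_j < \infty$. Hence $|\nabla f(w_m)|$ is bounded below by a fixed constant $c > 0$ whenever $|\nabla f(w_1)| > 0$, contradicting $|\nabla f(w_m)|\to 0$. This gives the necessity of \eqref{suminf}. I do not foresee any real obstacle; the only mild subtlety is applying Lipschitz continuity on the segments between iterates rather than only at the $w_m$, but this is the same reading of \eqref{Hess} already used in the proof of Proposition \ref{detprop}.
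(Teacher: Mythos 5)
Your proof is correct and follows essentially the same route as the paper: the same one-step lower bound $|\nabla f(w_{m+1})| \geq (1-L\alpha_m)|\nabla f(w_m)|$ (you derive it from Lipschitz continuity of $\nabla f$, the paper from a Taylor expansion, but these are the same use of \eqref{Hess}), the same iterated product, and the same estimate $\log(1-L\alpha_j)\geq -2L\alpha_j$ to conclude. The only difference is that you phrase the conclusion as a contrapositive while the paper argues directly from $\log|\nabla f(w_M)|\to-\infty$; this is a purely logical reformulation.
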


\begin{proof}
From the first-order Taylor expansion
$
\nabla f(w_{m+1})= \nabla f(w_m) -\alpha_m \nabla^2 f(\tilde w) \nabla f(w_m)
$
together with \eqref{Hess} we have, applying the reverse triangle inequality and using that $\alpha_m,L\geq 0$,
$$
|\nabla f(w_{m+1})|\geq  |\nabla f(w_m)| -\alpha_m L |\nabla f(w_m)|= (1-L\alpha_m) |\nabla f(w_m)|.
$$

By induction, we obtain therefore
$
|\nabla f(w_M)|\geq \Pi_{m=1}^{M-1} (1- L \alpha_m) |\nabla f(w_1)|,
$
	or, taking logarithms, and using the first order Taylor expansion of $\log$, together with
	smallness assumption \eqref{small}:
\ba\label{logeq}
	\log( |\nabla f(w_M)|)&\geq \sum_{m=1}^{M-1} \log (1- L \alpha_m) + \log( |\nabla f(w_1)|)\\
	&\geq  -\sum_{m=1}^{M-1} 2 L \alpha_m + \log( |\nabla f(w_1)|).
\ea

	Now, suppose that $\nabla f(w_1)\neq 0$, so that $\log( |\nabla f(w_1)|$ is finite,
	and $|\nabla f(w_M)|\to 0$ as $M\to \infty$, so that $\log(|\nabla f(w_M)|)\to - \infty$.
	Then, equating left and right hand limits in \eqref{logeq}, we must have 
	$-\sum_{m=1}^{M-1} 2 L \alpha_m=-\infty$, or $\sum_{m=1}^{M-1}  \alpha_m=\infty$.
\end{proof}

\subsection{Sufficiency}\label{s:suff}
We now show that our conditions are sufficent for convergence to zero of $\nabla f(w_m)$, hence
convergence of $w_m$ to the critical set $\mathcal{C}$ of critical points of $f$, for $f$ not
necessarily convex.

\begin{proposition}\label{suffprop}
	For $f\in C^2$ and $\alpha_m$ satisfying \eqref{small}, \eqref{suminf}, and \eqref{Hess}, 
	we have for any solution of \eqref{GD} that (i) $f(w_m)$ is monotone decreasing, and (ii) $\nabla f(w_m)\to 0$ as $m\to\infty$.
	If, also, $|f|\to \infty$ as $|w|\to \infty$, then 
	(iii) $w_m$ converges as $m\to\infty$ to the critical set $\mathcal{C}:=\{w:\, \nabla f(w)=0\}$. 
\end{proposition}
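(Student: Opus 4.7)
The plan is to imitate the constant-step-size Taylor argument of Proposition \ref{detprop}, but extract from it only a \emph{weighted} summability of $|\nabla f(w_m)|^2$ and then upgrade this weighted summability to pointwise decay by means of a Lipschitz-excursion argument. For (i), I would begin by reading off from the same Taylor expansion with remainder that
\[
 f(w_{m+1})-f(w_m) \leq -\alpha_m\,|\nabla f(w_m)|^2\bigl(1-L\alpha_m/2\bigr),
\]
and invoke \eqref{small} to guarantee $1-L\alpha_m/2\geq 1/2$ for all sufficiently large $m$, so that eventually $f(w_{m+1})\leq f(w_m)$ and in fact
\[
 f(w_{m+1})-f(w_m)\leq -\tfrac{\alpha_m}{2}\,|\nabla f(w_m)|^2.
\]
Telescoping and using $f\geq 0$ then gives the weighted summability
\[
 \sum_{m=1}^{\infty}\alpha_m\,|\nabla f(w_m)|^2\;\leq\; 2 f(w_1)\;<\;\infty.
\]

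The main obstacle is now to pass from this weighted sum plus \eqref{suminf} to $|\nabla f(w_m)|\to 0$; one cannot simply quote the constant-step-size proof, since weights $\alpha_m$ may tend to $0$ and small tails of $\alpha_m$ do not force the corresponding $|\nabla f(w_m)|^2$ to be small. The key auxiliary ingredient is the Lipschitz-type bound
\[
 \bigl||\nabla f(w_{m+1})|-|\nabla f(w_m)|\bigr|\leq L|w_{m+1}-w_m|=L\alpha_m|\nabla f(w_m)|,
\]
which comes directly from \eqref{Hess}; in particular $|\nabla f(w_{m+1})|\geq (1-L\alpha_m)|\nabla f(w_m)|$, so consecutive values of $g_m:=|\nabla f(w_m)|$ cannot drop by much relative to the step size.

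To obtain (ii), I would run an excursion argument. Assume for contradiction that $g_m\not\to 0$, so that there exist $\eps>0$ and infinitely many indices $m_k$ with $g_{m_k}\geq \eps$; for each such $m_k$ let $n_k$ be the first later index with $g_{n_k}<\eps/2$ (with $n_k=\infty$ otherwise). Iterating the Lipschitz estimate on $[m_k,n_k)$ gives
\[
 \tfrac{\eps}{2}>g_{n_k}\;\geq\;g_{m_k}\prod_{j=m_k}^{n_k-1}(1-L\alpha_j)\;\geq\;\eps\bigl(1-L\!\!\sum_{j\in[m_k,n_k)}\alpha_j\bigr),
\]
so that $\sum_{j\in[m_k,n_k)}\alpha_j>1/(2L)$ whenever $n_k<\infty$. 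But on each interval $[m_k,n_k)$ one also has $g_j\geq \eps/2$, hence
\[
 \sum_{k}\sum_{j\in[m_k,n_k)}\alpha_j\;\leq\;\frac{4}{\eps^{2}}\sum_{m}\alpha_m g_m^2\;<\;\infty,
\]
so only finitely many $n_k$ are finite. Thus eventually $g_m\geq \eps/2$, and then $\sum_m\alpha_m g_m^2\geq(\eps^2/4)\sum_m\alpha_m=\infty$ by \eqref{suminf}, contradicting the weighted summability above.

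Part (iii) is then immediate from (i) and (ii): since $f(w_m)$ is monotone and $|f(w)|\to\infty$ as $|w|\to\infty$, the iterates $\{w_m\}$ lie in a compact set $\{|w|\leq M\}$; combined with (ii) and continuity of $\nabla f$, any limit point must lie in $\mathcal{C}$, so $\mathrm{dist}(w_m,\mathcal{C})\to 0$. The only delicate step is the excursion argument in paragraph three, where I expect the proof to spend the bulk of its effort; everything else is a routine adaptation of the constant-rate case.
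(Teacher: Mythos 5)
Your proposal is correct in substance, but it takes a genuinely different route from the paper's main proof. You and the paper derive the same weighted summability $\sum_m \alpha_m |\nabla f(w_m)|^2<\infty$ from the Taylor/telescoping step, and parts (i) and (iii) are handled identically. The divergence is in how one upgrades weighted summability plus \eqref{suminf} to $|\nabla f(w_m)|\to 0$. The paper's main proof batches time into blocks $[m_j+1,m_{j+1}]$ with $1/2<\sum\alpha_m\leq 1$, extracts $\inf_{\rm block}|\nabla f|^2\to 0$, and then controls the within-block oscillation of $\nabla f$ via \eqref{Hess} and Jensen's inequality; this batching device is precisely what the paper later reuses verbatim in the stochastic setting (Theorem \ref{main}), where a pathwise excursion argument on expectations would be awkward. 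Your proof instead runs an excursion argument on the scalar $g_m=|\nabla f(w_m)|$, using the multiplicative lower bound $g_{m+1}\geq(1-L\alpha_m)g_m$ --- the same estimate the paper uses for its \emph{necessity} result, Proposition \ref{necprop} --- to show each finite excursion from $\{g\geq\eps\}$ down to $\{g<\eps/2\}$ costs $\sum\alpha_j>1/(2L)$, while the weighted sum caps the total cost. This is closest in spirit to the paper's stated \emph{alternative} proof, which also counts excursions, though that version measures spatial departures from the sublevel set $\mathcal{C}^\eps$ via the decrease of $f$ rather than multiplicative decay of $g_m$. One bookkeeping caveat: as written, your indices $m_k$ range over \emph{all} indices with $g_{m_k}\geq\eps$, so the intervals $[m_k,n_k)$ can overlap heavily and the bound $\sum_k\sum_{j\in[m_k,n_k)}\alpha_j\leq(4/\eps^2)\sum_m\alpha_m g_m^2$ fails due to double counting; you should instead take maximal disjoint excursions (choose $m_{k+1}$ as the first index $\geq n_k$ with $g\geq\eps$), after which every step of your argument, including the final dichotomy between ``some $n_k=\infty$'' and ``infinitely many finite excursions,'' goes through cleanly.
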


\begin{proof}
First, observe that assuming \eqref{small} and \eqref{Hess}, we have by Taylor expansion
\be\label{Taylor_est}
f(w_{m+1})\leq f(w) - (1/2)\alpha_m|\nabla f|^2,
\ee
whence, by telescoping sum,
$f(w_{M+1})\leq  f(w_1)  - \sum_{m=1}^M \alpha_m |\nabla f(w_m)|^2,$
giving
\be\label{sumcon}
\sum_{m=1}^M \alpha_m |\nabla f(w_m)|^2< \infty.
\ee

Provided $\alpha_j$ are sufficiently small, and $\sum \alpha_j=\infty$, we may
choose a sequence $m_j\to \infty$ such that 
\be\label{o1}
1/2< \sum_{m_j+1}^{m_{j+1}} \alpha_m \leq 1
\ee
for all $j$.
Thus, 
$$
\sum_{m_j+1}^{m_{j+1}} \alpha_j |\nabla f(w_m)|^2 \geq (1/2) \inf_{m_j<m\leq m_{j+1}}|\nabla f(w_m)|^2,
$$
whereas by \eqref{sumcon}, the lefthand side goes to zero as $j\to \infty$.  It follows that
\be\label{infcon}
\inf_{m_j<m\leq m_{j+1}}|\nabla f(w_m)|^2 \to 0
\ee
as $j\to \infty$.

On the other hand, for $m, n\in [m_j+1,m_{j+1}]$, $m<n$,
$
|w_n-w_m| \leq \sum_{j=m+1}^{n} \alpha_j |\nabla f(w_j)|,
$
hence, by \eqref{Hess},
	\be\label{tempo}
|\nabla f(w_n)-\nabla f(w_m)|^2\leq L^2
\Big(\sum_{j=m+1}^{n} \alpha_j |\nabla f(w_j)|\Big)^2.
\ee
By Jenssen's inequality, noting that, by \eqref{o1},
$\sum_{j=m+1}^{n} \alpha_j |\nabla f(w_j)|$ is approximately a weighted average of $|\nabla f(w_j)|$,
	the righthand side of \eqref{tempo}
is less than or equal to a bounded multiple of
$$
\sum_{j=m+1}^{n} \alpha_j |\nabla f(w_j)|^2.
$$
Noting that the latter goes to zero as $m,n\to \infty$, by \eqref{sumcon}, we thus have
$$
\max_{m, n\in [m_j+1,m_{j+1}]} |\nabla f(w_n)-\nabla f(w_m)|^2 \to 0
$$
as $j\to \infty$, which, together with \eqref{infcon}, gives $|\nabla f(w_n)|\to 0$ as $n\to\infty$ as claimed.
	The remaining assertions then follow exactly as in the proof of Proposition \ref{detprop}.
\end{proof}

\br\label{detrmk}
For fixed step size $\alpha_m\equiv \const$, we obtain $|\nabla f(w_m)|^2\to 0$ immediately from \eqref{sumcon}.
However, even in this deterministic case, the argument for decreasing step size is somewhat subtle,
relying on the intuition afforded by Section \ref{s:cont} and the analogy to continuous flow.
\er

\begin{proof}[Alternative proof of Proposition \ref{suffprop}] 
An alternate proof is to note that away from the set 
$$
\mathcal{C}^\eps:=\{ w:\, |\nabla f(w)|\leq \eps\},
$$
	$|\nabla f|\geq \eps$ and so, observing by \eqref{Taylor_est} combined with \eqref{GD} that 
$$
	|f(w_{m+1})-f(w_m)|\geq (\alpha_m/2)|\nabla f(w_m)|^2= (1/2) |\nabla f(w_m)| |w_{m+1}-w_m|,
$$
together with monotone decrease of $f$, yielding by $f\geq 0$ finite oscillation in $f$,
we find that the number of times that $w_m$ leaves $\mathcal{C}^\eps$ by distance of $\eta>0$ and then returns must
be finite.  But, at the same time, the number of times that $w_m$ visits $\mathcal{C}^\eps$ must be infinite, or else 
$$
\sum_{m=1}^\infty \alpha_j |\nabla f(w_j)|^2\geq \eps^2 \sum_{j=J}^\infty \alpha_j=\infty,
$$
contradicting \eqref{sumcon}.
Thus, eventually $w_m$ stays within $\eta$ of $\mathcal{C}^\eps$. Since $\eps>0$ was arbitrary, this
proves that $|\nabla f(w_m)|\to 0$ as $m\to \infty$, or assertion (i).  Assertions (ii)-(iii) then follow
as previously.
\end{proof}

\section{The stochastic case}\label{s:sgd}
With slight modification, and the additional hypotheses \eqref{sumfin} and \eqref{var}, 
the above deterministic argument gives convergence also in the stochastic case, as we now show.

\subsection{Key estimates}\label{s:key}
We start with the following key estimates, following \cite{GG}.

\begin{lemma}\label{keylem}
	For nonnegative $f\in C^2$ and $\alpha_m$ satisfying \eqref{small}, \eqref{Hess}, and \eqref{var},
	solutions of \eqref{SGD} with finite initial expectation $E[f(w_1)]$ satisfy
	\ba\label{keyeq}
	- (2\alpha_m)E[|\nabla f(w_m)|^2] - \sigma^2 (L/2) \alpha_m^2 &\leq
	E[f(w_{m+1})]- E[f(w_{m})]\\
	&\leq
	- (\alpha_m/2)E[|\nabla f(w_m)|^2] + \sigma^2 (L/2) \alpha_m^2
	\ea
	and
	\be\label{keysum}
	\sum_{m=1}^{M-1} \alpha_m E[|\nabla f(w_m)|^2] \leq 2E[f(w_{1})] +\sigma^2 L \sum_{m=1}^{M-1} \alpha_m^2.
	\ee
\end{lemma}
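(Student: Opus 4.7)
The plan is to mirror the Taylor-expansion argument of Proposition \ref{detprop}/\ref{suffprop}, now executed inside a conditional expectation given $w_m$. Since $\nabla f(w_m)$ becomes deterministic once we condition on $w_m$, the hypotheses \eqref{enabeq} and \eqref{var} handle the two terms that the stochastic gradient introduces, and the Hessian bound \eqref{Hess} plus smallness \eqref{small} convert the result into \eqref{keyeq}. Telescoping then yields \eqref{keysum}.

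First, I would invoke Taylor's theorem with second-order remainder together with \eqref{Hess} to obtain the two-sided bound
\begin{equation*}
\bigl| f(w_{m+1}) - f(w_m) - \nabla f(w_m)\cdot(w_{m+1}-w_m) \bigr| \;\leq\; (L/2)\,|w_{m+1}-w_m|^2,
\end{equation*}
then plug in the SGD update $w_{m+1}-w_m = -\alpha_m \tilde\nabla f(w_m)$. This converts the right-hand side of the increment $f(w_{m+1})-f(w_m)$ into an affine expression in the two random quantities $\nabla f(w_m)\cdot\tilde\nabla f(w_m)$ and $|\tilde\nabla f(w_m)|^2$.

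Second, I would take conditional expectation $E[\,\cdot\,|w_m]$. Because $\nabla f(w_m)$ is $w_m$-measurable, the consistency \eqref{enabeq} gives $E[\nabla f(w_m)\cdot\tilde\nabla f(w_m)\,|\,w_m] = |\nabla f(w_m)|^2$, while \eqref{var} gives $E[|\tilde\nabla f(w_m)|^2\,|\,w_m] \leq |\nabla f(w_m)|^2+\sigma^2$. Combining and then taking the outer expectation yields
\begin{equation*}
E[f(w_{m+1})]-E[f(w_m)] \;\leq\; -\alpha_m\bigl(1-L\alpha_m/2\bigr)\,E[|\nabla f(w_m)|^2] + (L\sigma^2/2)\,\alpha_m^2,
\end{equation*}
and symmetrically the lower bound with $-\alpha_m(1+L\alpha_m/2)$ replacing $-\alpha_m(1-L\alpha_m/2)$ and a sign change on the $\sigma^2$ term. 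Invoking smallness \eqref{small} so that $1-L\alpha_m/2\geq 1/2$ and $1+L\alpha_m/2\leq 2$ produces exactly \eqref{keyeq}.

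Third, to obtain \eqref{keysum} I would sum the upper half of \eqref{keyeq} from $m=1$ to $M-1$. The left-hand side telescopes to $E[f(w_M)]-E[f(w_1)]$, and rearrangement combined with the hypothesis $f\geq 0$ (so $E[f(w_M)]\geq 0$) gives the claimed bound after multiplying by $2$. There is no real obstacle here; the only subtle point is to condition on $w_m$ \emph{before} using \eqref{enabeq} so that $\nabla f(w_m)$ can be factored out of the expectation — this is precisely where the martingale-like structure enters, and it is what makes the stochastic analysis behave as cleanly as the deterministic one in Proposition \ref{suffprop}.
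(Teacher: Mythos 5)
Your proposal is correct and follows essentially the same route as the paper's proof: Taylor expansion with the Hessian bound \eqref{Hess}, substitution of the update rule, expectations via \eqref{enabeq} and \eqref{var}, absorption of the $(L/2)\alpha_m^2 E[|\nabla f(w_m)|^2]$ term using smallness \eqref{small}, and a telescoping sum with $f\geq 0$ for \eqref{keysum}. Your explicit conditioning on $w_m$ before invoking \eqref{enabeq} is a point the paper leaves implicit, but it is the same argument.
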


\begin{proof}
	By Taylor's theorem, with remainder, we have, similarly as in \eqref{base}, 
	\ba\label{sbase}
	f(x_{m+1})- f(x_{m}) &\leq \nabla f(x_m) \cdot (x_{m+1}- x_{m}) + (L/2) |x_{m+1}- x_{m}|^2 \\
	&= 
	-\alpha_m \nabla f(x_m)\cdot \tilde \nabla f(x_m) 
	+ (L/2) |\alpha_m \tilde \nabla f(x_m)|^2.
	\ea
	Taking expectations on both sides, and applying \eqref{var} and \eqref{enabeq}, we obtain
	\ba\label{sebase}
	E[f(x_{m+1})]- E[f(x_{m})] &\leq -\alpha_m E[|\nabla f(x_m)|^2]
	+ (L/2)\alpha_m^2 E[| f(x_m)|^2] + (L/2)\alpha_m^2 \sigma^2. 
	\ea
	Taking $\alpha_m$ sufficiently small that $(L/2)\alpha_m^2\leq \alpha_m/2$, i.e.,
	$\alpha_m\leq 1/L$, we obtain finally the second inequality of \eqref{keyeq}.
	The first inequality of \eqref{keyeq} follows similarly.

	Summing the left- and righthand sides of \eqref{keyeq} from $m=1$ to $M-1$ and observing that
	the lefthand contribution forms a telescoping sum, we obtain after rearrangement
	$$
	\sum_{m=1}^{M-1} \alpha_m E[|\nabla f(w_m)|^2] \leq 2(E[f(w_{1})]-E[f(w_{M})] 
	+\sigma^2 L \sum_{m=1}^{M-1} \alpha_m^2,
	$$
	yielding \eqref{keysum} by nonnegativity of $f$.
\end{proof}

\subsection{The approximately convex case}\label{s:con}

\begin{proof}[Proof of Proposition \ref{main2}]
	Defining $F_m:= E[f(w_m)]$, we have by \eqref{acon} $F_m \sim E[|\nabla f(w_m)|^2]$, whence,
	substituting into \eqref{keyeq}, we obtain for some $c, C>0$
	$$
	F_{m+1}\leq (1- c\alpha_m)F_m  + C \alpha_m^2.
	$$
	This linear recursive inequality may be solved by discrete variation of constants/Duhamel principle, to
	give
	\be\label{Frep}
	F_{m+1}\leq  F_1 \Pi_{j=1}^{m} (1-c\alpha_j)  + \sum_{i=1}^m C\alpha_i^2 \Pi_{j=i}^{m}(1-c\alpha_j).
	\ee
	Using that $\alpha_m$ is nonincreasing and small, we may estimate the first term on the righthand side
	by
	$$
	F_1 e^{\sum_{j=1}^m \log (1-c\alpha_j)} \sim F_1 e^{-\sum_{j=1}^m c\alpha_j},
	$$
	which, by \eqref{suminf}, goes to zero as $m\to \infty$.

	To estimate the second term, introduce an extension $\alpha(\cdot)$ of $\alpha_j$
	to the positive real line, defined as any $C^1$ nondecreasing function such that $\alpha(j)=\alpha_j$,
	$|\alpha'|\leq K$, and $|\alpha|$ is sufficiently small.
	Using the integral test, and estimating products by exponentials of sums as above, we then have
	$$
	\sum_{i=1}^m C\alpha_i^2 \Pi_{j=i}^{m}(1-c\alpha_j) \sim
	C\int_1^m\alpha(i)^2 e^{-c\int{i}^{m}\alpha(j) dj}di.
	$$
	Using the fact that $ (d/di)e^{-c\int_i^{m}\alpha(j) dj} = c\alpha \e^{-c\int_i^{m}\alpha(j) dj} $,
	we find, integrating by parts, that this may be bounded by
 $$
 \begin{aligned}
	 C\int_1^m\alpha(i)^2 e^{-c\int{i}^{m}\alpha(j) dj}di &=
	 (C/c) \int_1^{m} \alpha(i)(d/di) e^{-c\int_i^{m}\alpha(j) dj} di \\
	 &= (C/c)\alpha(i) e^{-c\int_i^{m}\alpha(j) dj}|_1^m
	 -(C/c)\int_1^{m} \alpha'(j) e^{-c\int_i^{m}\alpha(j) dj}di \\
	 &\leq  (C/c) \Big(\alpha(m) - \alpha(1) e^{-c\int_1^{m}\alpha(j)dj}\Big)\\
	 &\quad
	 - (C/c) \int_{m_0}^m \alpha'(i) di - K(C/c)m_0 e^{-c\int_{m_0}^{m}\alpha(j) dj} \\
 \end{aligned}
 $$
 for any $m_0$, hence converges to zero as $m\to \infty$
 by $\alpha(m)\to 0$ and  $e^{-c\int_{m_0}^{m}\alpha(j) dj} \to 0$.
\end{proof}

\subsection{The general (nonconvex) case}\label{s:gen}
\begin{proof}[Proof of Theorem \ref{main}]
Note, by \eqref{sumfin}, that $\alpha_m\to 0$. Thus, for any $\eps>0$,
we can eventually choose $m_j\to \infty$ such that
\be\label{oeps}
\hbox{\rm $\eps/2< \sum_{m_j+1}^{m_{j+1}} \alpha_m \leq \eps$ for all $j$.}
\ee
	Applying \eqref{keysum} together with \eqref{sumfin}, we obtain
	$$
	\sum_m \alpha_m E[|\nabla f(w_m)|^2]<\infty,
	$$
whence, by the same argument as in the deterministic case, 
$\eps \inf_{m_j<m\leq m_{j+1}}E[|\nabla f(w_m)|^2] \to 0$ as $j\to \infty$, with $\eps>0$ fixed, and thus
eventually
\be\label{sinfcon}
	\hbox{\rm $\inf_{m_j<m\leq m_{j+1}}E[|\nabla f(w_m)|^2]\leq \eps $ for $j$ sufficiently large.}
\ee

On the other hand, for $m, n\in [m_j+1,m_{j+1}]$, $m<n$,
$
|w_n-w_m| \leq \sum_{j=m+1}^{n} \alpha_j |\tilde \nabla f(w_j)|,
$
hence, by \eqref{Hess},
$$
|\nabla f(w_n)-\nabla f(w_m)|^2\leq L^2
\Big(\eps\sum_{j=m+1}^{n} (\alpha_j/\eps) |\tilde \nabla f(w_j)|\Big)^2,
$$
which, by Jenssen's inequality (noting that, by 
\eqref{oeps},
	$\sum_{j=m+1}^{n} (\alpha_j/\eps) |\tilde \nabla f(w_j)|$ is approximately a weighted average of $|f(w_j)|$)
is less than or equal to a bounded multiple of
$$
	L^2 \eps^2 \sum_{j=m+1}^{n} (\alpha_j/\eps)  |\tilde \nabla f(w_j)|^2.
$$

It follows then 
that
$$
\begin{aligned}
	E[|\nabla f(w_n)- \nabla f(w_m)|^2] 
	&\leq 
	2 L^2 \eps^2 \sum_{j=m+1}^{n} (\alpha_j/\eps)  E[|\tilde \nabla f(w_j)|^2]\\
	&\leq
	2 L^2 \eps^2 \sum_{j=m+1}^{n} (\alpha_j/\eps)  \big(E[| \nabla f(w_j)|^2]+ \sigma^2\big) \\
	&=
	2 L^2 \eps \big( \sum_{j=m+1}^{n} \alpha_j  \big(E[| \nabla f(w_j)|^2]\big) + 2L^2  \eps^2 \sigma^2 = O(\eps),
\end{aligned}
$$
	and thus, by the vector inequality $|v|^2\leq 2(|v-w|^2 + |w|^2)$, that
	$$
	E[|\nabla f(w_n)|^2] \leq O(\eps) + 2 E[|\nabla f(w_m)|^2].
	$$

Combining these results, we have that $E[|\nabla f(w_m)|^2]$ is eventually $O(\eps)$, and, as $\eps>0$ was
arbitrary, that $E[|\nabla f(w_m)|^2]\to 0$, as claimed, verifying (ii).
	By Chebyshev's inequality, this gives $|\nabla f(w_m)|\to 0$ in probability, whence
	(iii) then follows as in the deterministic case.

	To verify (i), we have only to sum the three sides of \eqref{keyeq} from $m=M$ to $N-1$, yielding
	by telescoping of the middle sum
	$$
	\begin{aligned}
		-\sum_{m=M}^{N-1} (2\alpha_m)E[|\nabla f(w_m)|^2] - &\sigma^2 (L/2) \sum_{m=M}^{N-1} \alpha_m^2 \leq
	E[f(w_{N})]- E[f(w_{M})]\\
	&\leq
		- \sum_{m=M}^{N-1}(\alpha_m/2)E[|\nabla f(w_m)|^2] + \sigma^2 (L/2) \sum_{m=M}^{N-1}\alpha_m^2.
	\end{aligned}
	$$
	Thus, $ E[f(w_{N})]- E[f(w_{M})]$ is bounded above and below by the tails from $M$ to $N-1$ of
	the sums of two convergent series, hence goes to zero as $M,N\to \infty$.
	The sequence $E[f(w_m)]$ is thus Cauchy, hence converges as $m\to \infty$ to its $\liminf$ and $\limsup$,
	yielding (i).
\end{proof}

\subsection{The time-averaged case}\label{s:timeav}

\begin{proof}[Proof of Proposition \ref{avprop}]
	The time-averaged result \eqref{aprob}-\eqref{averaged} may be obtained directly from
\eqref{suminf}, \eqref{keysum}, by
$$
E[|\nabla f(z_m)|^2]= 
(1/\sum_{j=1}^m \alpha_j) \sum_{j=1}^m \alpha_j E[|\nabla f(w_j)|^2]
< (1/\sum_{j=1}^m \alpha_j)\Big(E[f(w_1)]+ \sigma^2 L \sum_{j=1}^m \alpha_j^2\Big) \to 0,
$$
so long as $ \frac{\sum_{j=1}^M \alpha_j^2 }{\sum_{j=1}^m \alpha_j}\to 0$ as $m\to \infty$,
as holds in particular if $\lim_{m\to \infty}\alpha_m=0$.
\end{proof}

\br\label{comprmk}
Comparing the proof of Proposition \ref{avprop} to that of Theorem \ref{main},
we see that the new element in the latter (i.e., in the nonaveraged case) 
is the same time-batching/Jensen estimate as in the deterministic case.
\er

\subsection{The nonconvex case revisited}\label{s:revisit}
Dropping the square summability condition \eqref{sumfin}, we can still recover for nonconvex functions
the following partial result.

\begin{proposition}\label{rvprop}
	For nonnegative $f\in C^2$ and $\alpha_m$ satisfying \eqref{small}, \eqref{suminf}, \eqref{Hess}
	\eqref{var}, and $\lim_{m\to \infty}\alpha_m=0$,
	and any random variable $\{w_m\}$ satisfying \eqref{SGD} with $E[f(w_m)]<\infty$ for all $m$
	 \be\label{limlim}
	 \liminf_{m\to \infty} E[|\nabla f(w_m)|^2]=0.
	 \ee
\end{proposition}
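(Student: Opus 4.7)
The plan is to argue by contradiction, combining the key estimate of Lemma \ref{keylem} with the hypothesis $\lim_{m\to\infty}\alpha_m=0$ to absorb the quadratic error term $\sigma^2(L/2)\alpha_m^2$ into the linear descent term $-(\alpha_m/2)E[|\nabla f(w_m)|^2]$. This is the same trick familiar from adaptive step-size arguments in deterministic descent: even without square summability, once $\alpha_m$ is small relative to a hypothesized lower bound on $E[|\nabla f(w_m)|^2]$, the noise term becomes dominated by the descent term.

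Specifically, suppose for contradiction that $\liminf_{m\to\infty} E[|\nabla f(w_m)|^2] = 2\delta$ for some $\delta>0$. Then there exists $M_0$ such that $E[|\nabla f(w_m)|^2]\geq \delta$ for all $m\geq M_0$. Since $\alpha_m\to 0$, I can choose $M_1\geq M_0$ so that $\alpha_m\leq \delta/(2\sigma^2 L)$ for all $m\geq M_1$ (the case $\sigma=0$ being covered by Proposition \ref{suffprop}). For such $m$, the error term of the second inequality in \eqref{keyeq} satisfies
$$ \sigma^2 (L/2) \alpha_m^2 \;\leq\; (\alpha_m/4)\,\delta \;\leq\; (\alpha_m/4)\,E[|\nabla f(w_m)|^2], $$
so that \eqref{keyeq} collapses to
$$ E[f(w_{m+1})] - E[f(w_m)] \;\leq\; -(\alpha_m/4)\, E[|\nabla f(w_m)|^2] \;\leq\; -(\delta/4)\,\alpha_m. $$
Telescoping from $M_1$ to $N-1$ and using $f\geq 0$ yields
$$ -E[f(w_{M_1})] \;\leq\; E[f(w_N)] - E[f(w_{M_1})] \;\leq\; -(\delta/4) \sum_{m=M_1}^{N-1}\alpha_m. $$
By \eqref{suminf}, the right-hand side tends to $-\infty$ as $N\to\infty$, contradicting finiteness of the left-hand side. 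Hence $\liminf_{m\to\infty} E[|\nabla f(w_m)|^2]=0$.

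There is no essential obstacle: the proof is really just the descent inequality of Lemma \ref{keylem} run to contradiction. The only point worth checking carefully is the compatibility of the three threshold choices (the $M_0$ from the liminf assumption, the $M_1$ from $\alpha_m\to 0$, and the quantitative smallness $\alpha_m\leq \delta/(2\sigma^2 L)$ needed to dominate the variance term), which all align cleanly given that $\delta>0$ is a fixed number. Note that in contrast to Theorem \ref{main}, one cannot upgrade this to $\lim_{m\to\infty} E[|\nabla f(w_m)|^2]=0$ without further input, since the Jensen/time-batching step used there to propagate the bound across neighboring indices relied on controlling $\sum \alpha_m^2$ via \eqref{sumfin}.
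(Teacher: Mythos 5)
Your proposal is correct and follows essentially the same route as the paper's proof: negate the conclusion to get a uniform lower bound $E[|\nabla f(w_m)|^2]\geq\delta$ for large $m$, use $\alpha_m\to 0$ to absorb the variance term $\sigma^2(L/2)\alpha_m^2$ of \eqref{keyeq} into the descent term, and telescope to contradict \eqref{suminf}. The threshold bookkeeping matches the paper's (which takes $\alpha_m\leq\eps/2\sigma^2L$), so there is nothing to add.
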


\begin{proof}
	Suppose by way of contradiction that 
	\be\label{cont}
	\hbox{\rm $ E[|\nabla f(w_m)|^2]\geq \eps>0$ for $m\geq M$, some $\eps>0$.}
	\ee
	By \eqref{small}, \eqref{keyeq}, and $\lim_{m\to \infty}\alpha_m=0$, we have,
	taking $\alpha_m\leq \eps/2\sigma^2L$:
	$$
	\begin{aligned}
	E[f(w_{m+1})]- E[f(w_{m})] &\leq
	- (\alpha_m/2)E[|\nabla f(w_m)|^2] + \sigma^2 (L/2) \alpha_m^2\\
		&\leq - (\alpha_m/4)E[|\nabla f(w_m)|^2]
	\end{aligned}
	$$
	for $m\geq M_2$, some $M_2\geq M$.
	Summing left and right sides, and dropping terms with favorable sign, we obtain therefore
	$$
	\sum_{m=M_2}^\infty (\alpha_m/4)E[|\nabla f(w_m)|^2] \leq E[f(w_{M_2})]<\infty.
	$$
	But, on the other hand, by \eqref{cont} and \eqref{suminf}, we have
	$$
	\sum_{m=M_2}^\infty (\alpha_m/4)E[|\nabla f(w_m)|^2] \geq \eps\sum_{m=M_2}^\infty \alpha_m=\infty,$$
	a contradiction. By contradiction, therefore, \eqref{cont} is false, giving the result.
\end{proof}

\br\label{halfrmk}
The above is roughly half of the alternative proof of Proposition \ref{suffprop}.
\er

The required finiteness of $E[f(w_m)]$ follows, for example, for the canonical problem \eqref{canform} under
auxiliary assumption \eqref{auxcond}, by a.s. boundedness of $f(w_m)$.
The following result gives a much more general condition for finiteness of $E[f(w_m)]$.

\begin{lemma}
	For nonnegative $f\in C^2$ and $\alpha_m$ satisfying \eqref{small}, \eqref{suminf}, \eqref{Hess}
	\eqref{var}, and 
	$$
	\lim_{m\to \infty}\alpha_m=0,
	$$
	and any random variable $\{w_m\}$ satisfying \eqref{SGD} with $E[f(w_1)]<\infty$, 
	we have $E[f(w_m)]<\infty$ for all $m$ under the growth condition	
	\be\label{fgrowth}
	\hbox{ $f(w)\leq C_1 + C_2 |\nabla f(w)|^2$ for some $C_1, C_2>0$.}
	\ee
\end{lemma}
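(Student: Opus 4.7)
The plan is to run induction on $m$, showing at each step that the conditional expectation $E[f(w_{m+1})\mid w_m]$ can be bounded above by an affine function of $f(w_m)$, so that finiteness of $E[f(w_m)]$ transfers to $E[f(w_{m+1})]$. The growth condition \eqref{fgrowth} is exactly what is needed to close this recursion: it lets us replace the $-|\nabla f(w_m)|^2$ term from the Taylor bound by a multiple of $-f(w_m)$ plus a constant, so that the familiar ``supermartingale up to an affine correction'' estimate of Lemma \ref{keylem} becomes a contractive linear recursion in $E[f(w_m)]$.

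Concretely, I would repeat the Taylor/variance computation at the start of the proof of Lemma \ref{keylem}, but keep it conditional on $w_m$. Using \eqref{Hess}, \eqref{var}, \eqref{enabeq}, and $\alpha_m\leq 1/L$ (from \eqref{small}), this gives
\begin{equation*}
E\bigl[f(w_{m+1})\mid w_m\bigr] \leq f(w_m)-\tfrac{\alpha_m}{2}|\nabla f(w_m)|^2 + \tfrac{\sigma^2 L}{2}\alpha_m^2.
\end{equation*}
Next, rewriting \eqref{fgrowth} as $|\nabla f(w)|^2 \geq (f(w)-C_1)/C_2$ (valid whether or not $f(w)\geq C_1$, since otherwise the right side is negative and the estimate is trivial), I would substitute to obtain
\begin{equation*}
E\bigl[f(w_{m+1})\mid w_m\bigr] \leq \Bigl(1-\tfrac{\alpha_m}{2C_2}\Bigr) f(w_m) + \tfrac{C_1\alpha_m}{2C_2} + \tfrac{\sigma^2 L}{2}\alpha_m^2.
\end{equation*}
Taking full expectations and applying the tower property yields the unconditional recursion
\begin{equation*}
E[f(w_{m+1})] \leq \Bigl(1-\tfrac{\alpha_m}{2C_2}\Bigr) E[f(w_m)] + \tfrac{C_1\alpha_m}{2C_2} + \tfrac{\sigma^2 L}{2}\alpha_m^2.
\end{equation*}

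Since the coefficients $1-\alpha_m/(2C_2)$, $C_1\alpha_m/(2C_2)$, $\sigma^2 L\alpha_m^2/2$ are all finite under \eqref{small} and the standing assumptions, the inequality immediately shows that if $E[f(w_m)]<\infty$ then $E[f(w_{m+1})]<\infty$. Starting from the hypothesis $E[f(w_1)]<\infty$, induction on $m$ completes the proof. There is no real obstacle here: the only delicate point is making sure the conditional Taylor bound is justified pointwise before integrating, so that one does not need to assume $E[|\nabla f(w_m)|^2]<\infty$ in advance — the growth condition will then supply it \emph{a posteriori}. Once that is cleanly set up, the induction is routine.
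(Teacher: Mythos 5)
Your argument is correct, and it reaches the conclusion by a somewhat different route than the paper. The paper's proof is a dichotomy at the level of unconditional expectations: for fixed $\eps>0$, either $E[|\nabla f(w_m)|^2]>\eps$, in which case (by the argument of Proposition \ref{rvprop}, valid once $\alpha_m\leq \eps/2\sigma^2 L$) the key estimate \eqref{keyeq} forces $E[f(w_{m+1})]<E[f(w_m)]$, or else $E[|\nabla f(w_m)|^2]\leq\eps$, in which case \eqref{fgrowth} gives $E[f(w_m)]\leq C_1+C_2\eps$ and then \eqref{keyeq} gives $E[f(w_{m+1})]\leq C_\eps$; induction yields the uniform bound $\max\{E[f(w_1)],C_\eps\}$. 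You instead substitute \eqref{fgrowth} directly into the one-step descent inequality, turning the $-|\nabla f(w_m)|^2$ term into a $-f(w_m)$ term and producing the affine recursion $E[f(w_{m+1})]\leq(1-\alpha_m/2C_2)E[f(w_m)]+C_1\alpha_m/2C_2+\sigma^2L\alpha_m^2/2$. Both proofs rest on the same Taylor/variance estimate of Lemma \ref{keylem} plus \eqref{fgrowth} plus induction, but your deployment of the growth condition is uniform rather than case-split, which buys two things: (i) it avoids the restriction ``$\alpha_m$ sufficiently small relative to $\eps$'' needed for the paper's first case (and hence any fuss about the initial finitely many indices before $\alpha_m$ has decayed), and (ii) by working with the conditional expectation $E[f(w_{m+1})\mid w_m]$ pointwise and only then integrating nonnegative quantities, it sidesteps the mild circularity of invoking \eqref{keyeq} — whose derivation already takes expectations — before finiteness of $E[f(w_m)]$ is known; this is exactly the ``delicate point'' you flag. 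Your recursion also yields the same uniform-in-$m$ bound as the paper's if one notes that its fixed point is $C_1+\sigma^2 LC_2\alpha_m$ and that $\alpha_m$ is nonincreasing, though you do not need this for the stated conclusion.
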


\begin{proof}
	By the previous proof, for any $\eps>0$, $E[f(w_{m+1})] < E[f(w_{m+1})]$ 
	for $\alpha_m$ sufficiently small and $E[|\nabla f(w_m)|^2]>\eps$.
	But, on the other hand, if
	$E[|\nabla f(w_m)|^2] \leq \eps$ then by condition \eqref{fgrowth} we have 
	$E[f(w_m)]\leq C_1 + C_2 \eps$, whereupon, by \eqref{keyeq}, we have
	$$
	E[f(w_{m+1})]\leq E[f(w_m)]+\sigma^2(L/2)\alpha_m^2\leq 
	C_\eps:= C_1 + C_2 \eps + \alpha_m^2\sigma^2 (L/2). 
	$$
	Combining these observations, we find by induction that 
	$$
	\hbox{\rm  $E[f(w_m)]\leq \max \{ E[f(w_1)], C_\eps\}<\infty$ for all $m\geq 1$.}
	$$
\end{proof}

\subsection{Stochastic coordinate descent}\label{s:scd} For (SCD), we may establish a bit more,
arguing essentially as in the deterministic case. In particular, we require only $\alpha_m\ll 1$
for convergence, and not $\alpha_m\to 0$.

\begin{proposition}\label{suffpropscd}
	For $f\in C^2$ and $\alpha_m$ satisfying \eqref{small}, \eqref{suminf}, and \eqref{Hess}, 
	we have for any solution of \eqref{scd} that (i) $E[f(w_m)]$ is monotone decreasing, and 
	(ii) $E[|\nabla f(w_m)|^2]\to 0$ as $m\to\infty$.
	If, also, $|f|\to \infty$ as $|w|\to \infty$, then 
	(iii) $w_m$ converges as $m\to\infty$ to the critical set $\mathcal{C}:=\{w:\, \nabla f(w)=0\}$. 
\end{proposition}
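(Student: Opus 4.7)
The plan is to transplant the deterministic argument of Proposition \ref{suffprop} to the stochastic setting, exploiting a feature peculiar to (SCD): its gradient estimator obeys a \emph{multiplicative} variance identity with no additive $\sigma^2$ term, so the square-summability hypothesis \eqref{sumfin} is unnecessary. Writing $\theta = e_{j^*}$ for the uniformly chosen coordinate direction, one computes directly
$$
E[\tilde\nabla f(w)\cdot \nabla f(w)\mid w] = |\nabla f(w)|^2, \qquad E[|\tilde\nabla f(w)|^2\mid w] = d\,|\nabla f(w)|^2.
$$

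The first step is the energy estimate. Apply Taylor's theorem together with \eqref{Hess} to \eqref{SGD} with $\tilde\nabla f$ as in \eqref{scd}, take conditional expectation on $w_m$ using the moment identities above, and then take overall expectation to obtain
$$
E[f(w_{m+1})] - E[f(w_m)] \le -\alpha_m\bigl(1 - Ld\alpha_m/2\bigr) E[|\nabla f(w_m)|^2].
$$
Since \eqref{small} only requires $\alpha_m$ be sufficiently small, one may take $\alpha_m\le 1/(Ld)$, which makes the prefactor at least $1/2$ and gives the monotone decrease (i). Telescoping and using $f\ge 0$ then yields the summability bound $\sum_{m=1}^\infty \alpha_m E[|\nabla f(w_m)|^2] \le 2E[f(w_1)]<\infty$, entirely without invoking $\sum\alpha_m^2<\infty$; this is the structural payoff of the second-moment identity.

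For (ii), I run the windowing/Jensen scheme of Proposition \ref{suffprop} at the level of expectations. By \eqref{suminf} and \eqref{small}, choose $m_j\to\infty$ with $1/2<\sum_{m=m_j+1}^{m_{j+1}}\alpha_m\le 1$; the summability bound forces $\inf_{m_j<m\le m_{j+1}}E[|\nabla f(w_m)|^2]\to 0$. To upgrade from $\inf$ to $\lim$, bound $|w_n-w_m|\le\sum_{k=m+1}^n\alpha_k|\tilde\nabla f(w_k)|$, invoke \eqref{Hess} and Cauchy--Schwarz with weights $\alpha_k$, then take expectations and apply the second-moment identity to obtain
$$
E[|\nabla f(w_n)-\nabla f(w_m)|^2] \le L^2 d \sum_{k=m+1}^n \alpha_k E[|\nabla f(w_k)|^2],
$$
a tail of the summable series, hence vanishing as $j\to\infty$. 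The vector inequality $|v|^2\le 2(|v-w|^2+|w|^2)$ applied with $w$ an intra-window infimizer then converts the $\inf$ result into $E[|\nabla f(w_m)|^2]\to 0$.

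For (iii), Chebyshev's inequality translates (ii) into $|\nabla f(w_m)|\to 0$ in probability; monotonicity bounds $E[f(w_m)]\le E[f(w_1)]<\infty$, and coercivity of $f$ combined with Markov's inequality yields the tightness estimate $\sup_m P(|w_m|\ge R)\to 0$ as $R\to\infty$. On a large fixed ball, continuity of $\nabla f$ forces any $w$ with $\dist(w,\mathcal{C})>\delta$ to satisfy $|\nabla f(w)|>\eta(\delta)>0$, and a final Markov estimate produces convergence of $w_m$ in probability to $\mathcal{C}$. The only point of friction in the whole argument is the Jensen step of (ii) without \eqref{sumfin}: this is precisely where the multiplicative identity $E[|\tilde\nabla f|^2\mid w]=d|\nabla f|^2$ earns its keep, keeping the right side proportional to $E[|\nabla f(w_k)|^2]$ instead of introducing a free $\sigma^2$ that would demand separate control.
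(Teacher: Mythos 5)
Your proposal is correct and follows essentially the same route as the paper: the multiplicative second-moment identity $E[|\tilde\nabla f|^2\mid w]=d\,|\nabla f(w)|^2$ (so no additive $\sigma^2$ and hence no need for \eqref{sumfin}), the telescoped energy estimate giving $\sum_m\alpha_m E[|\nabla f(w_m)|^2]<\infty$, the windowing with $1/2<\sum\alpha_m\le 1$ plus the weighted Jensen/Cauchy--Schwarz bound $E[|\nabla f(w_n)-\nabla f(w_m)|^2]\le dL^2\sum_k\alpha_k E[|\nabla f(w_k)|^2]$, and the vector inequality to upgrade the infimum. Your explicit requirement $\alpha_m\le 1/(Ld)$ matches the paper's Remark on the step size needing to be $d$ times smaller than in the standard case.
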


\begin{proof}
	By direct calculation, 
	$$
	E[\nabla f(w_m)\cdot \tilde \nabla f(w_m)]=(1/d) E[|\tilde \nabla f(w_m)|^2]=  E[|\nabla f(w_m)|^2],
	$$
	whence \eqref{keyeq} reduces for $m$ large enough (hence $\alpha_m$ sufficiently small) to
	\be\label{keyeqscd}
	E[f(w_{m+1})]- E[f(w_{m})] \leq - \alpha_m E[|\nabla f(w_m)|^2].
	\ee
	This gives monotone decrease in $E[f(w_{m})]$, verifying (ii),
	and also summability of $\alpha_m E[|\nabla f(w_m)|^2]$.

By $\alpha_m\to 0$, we can eventually choose $m_j\to \infty$ such that
$1/2< \sum_{m_j+1}^{m_{j+1}} \alpha_m \leq 1$ for all $j$, whence 
\be\label{infeq1}
\hbox{\rm $\inf_{m_j<m\leq m_{j+1}}E[|\nabla f(w_m)|^2] \to 0$ as $j\to \infty$}
\ee
and also
\be\label{infeq2}
\hbox{\rm $\sum_{m_j<m\leq m_{j+1}} \alpha_m E[|\nabla f(w_m)|^2] \to 0$ as $j\to \infty$}
\ee

On the other hand, for $m, n\in [m_j+1,m_{j+1}]$, $m<n$,
$
|w_n-w_m| \leq \sum_{j=m+1}^{n} \alpha_j |\tilde \nabla f(w_j)|,
$
hence, by \eqref{Hess},
$$
|\nabla f(w_n)-\nabla f(w_m)|^2\leq L^2 \Big(\sum_{j=m+1}^{n} \alpha_j |\tilde \nabla f(w_j)|\Big)^2,
$$
which, by Jenssen's inequality, is less than or equal to a bounded multiple of
$$
	L^2 \sum_{j=m+1}^{n} \alpha_j  |\tilde \nabla f(w_j)|^2.
$$
It follows that
$$
|\nabla f(w_n)|^2 \leq 2|\nabla f(w_m)|^2 +  L^2 \Big(\sum_{j=m+1}^{n} \alpha_j |\tilde \nabla f(w_j)|\Big)^2,
$$
whence, taking expectations, we have
$$
\begin{aligned}
E[|\nabla f(w_n)|^2] &\leq 
2E[|\nabla f(w_m)|^2] +  L^2 \sum_{j=m+1}^{n} \alpha_j  E[|\tilde \nabla f(w_j)|^2]\\
&\leq 2E[|\nabla f(w_m)|^2] +  d L^2  \sum_{j=m+1}^{n} \alpha_j  E[|\nabla f(w_j)|^2].
\end{aligned}
$$

Taking $m,n\in [m_j,m_{j+1}]$ with the infimum of $E[|\nabla f|^2]$
in $[m_j,m_{j+1}]$ achieved at $m$, and combining \eqref{infeq1} and \eqref{infeq2}, 
we thus obtain $E[|\nabla f(w_n)|^2]\to 0$ as $j\to \infty$, verifying (ii).
The rest goes as in the proof of Theorem \ref{main}.
\end{proof}

\br\label{stochrmk}
Note, in the proof of Proposition \ref{suffpropscd}, that the
size of $\alpha_m$ must be chosen $d$ times smaller than in the treatment of the standard case,
in order that $d\alpha_m$ be dominated by the Hessian bound $L$,
where $d$ is the number of coordinate directions.  
This may not be the rate-determining factor, but anyway somewhat 
nullifies the $d$ times savings in computation afforded by (SCD).
\er

\section{Some simple examples}\label{s:egs}
We illustrate the theory with some low-dimensional examples, based on the concrete form \eqref{canform}:
$$
f(w)=(1/N)\sum_{i=1}^N f_i(w),  
\qquad \tilde f(w):=(1/b) \sum_{i\in S} f_i(w),
$$
$f,f_k: \R^d\to \R$, where 
$S$ is chosen with equal likelihood among $S\subset \{1, \dots, N\}$ of size $|S|=b$.

\subsection{An explicitly solvable case}\label{s:simple}
We start with the simplest case $d=1$, $N=2$, $b=1$, and a convex example that we can essentially 
solve completely.
\be\label{xeq}
f(x)= (1/2)(f_1+f_2)(x)=x^2+1, \qquad f_1(x)=(x-1)^2,\quad f_2(x)=(x+1)^2. 
\ee
Then,
$$
\nabla f(x)=2x,
$$
while 
$ \tilde \nabla f(x)$ is  $2(x-1)$ with probability $1/2$ and $2(x+1)$ with probability $1/2$, or
\be \label{tildenabla} 
 \tilde \nabla f(x)= 2(x+\theta),
 \ee
 where $\theta=\pm 1$ with probability $1/2$.
 Thus, stochastic gradient descent corresponds to the stochastic linear recursion relation
 $x_{m+1}=x_m - 2\alpha_m(x_m + \theta)$, or
 \be\label{egsgd}
 x_{m+1}=(1-2\alpha_m) x_m - 2\alpha_m \theta.
 \ee

 This in turn may be reduced by the variation of constants transformation
 \be\label{vc}
 y_m \Pi_{i=1}^n (1-2\alpha_i)=x_m
 \ee
 to a summation
 $y_m:= y_m- 2 \alpha_m/\Pi_{i=1}^n(1-2\alpha_i)$, or
 \be\label{redsum}
 y_{m+1}-y_1= \sum_{i=1}^{m } X_i, \qquad X_m:= - 2 \alpha_m \theta/\Pi_{i=1}^n(1-2\alpha_i),
 \ee
 that is, a random walk with varying step size.
 This may be expected under suitable conditions to converge to a normal random variable determined by 
 expectation and variance alone.  
 Indeed, necessary and sufficient conditions for asymptotic normality are given in the present case by
 the well-known Lindeberg-Feller theorem.

 \begin{proposition}[Lindeberg-Feller theorem]\label{LF}
Let $X_n$, $n\geq 1$, be independent random variables with finite second moments. 
	 Let $\sigma_n^2={\rm Var} [X_n]$ and $s_n^2=\sum_{i=1}^n \sigma_n^2$.
For $\eps > 0$, set
	 \be\label{sneps}
	 s_{n,\eps}^2:=E[|X_n - E[X_n]|^2;\, |X_n - E[X_n]|^2 \geq \eps s_n]
	 \ee
	 Assume that $\lim_{n\to \infty} s_n =\infty$ and $\lim_{n\to \infty}\sigma_n/s_n=0$.
	 Then, $(1/s_n)\sum_{i=1}^n (X_i-E[x_i])$ 
converges in distribution to the standard normal if and only if for all $\eps>0$,
	 \be\label{LFcond}
	 \lim_{n\to \infty} s_{n,\eps}/s_n=0.
	 \ee
 \end{proposition}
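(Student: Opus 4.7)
The plan is to prove both directions via characteristic functions and L\'evy's continuity theorem, following the classical Lindeberg argument with Feller's refinement. Set $Y_{n,i}:=(X_i-E[X_i])/s_n$, write $\phi_{n,i}(t):=E[e^{itY_{n,i}}]$, and note the problem reduces to showing that $\prod_{i=1}^n\phi_{n,i}(t)\to e^{-t^2/2}$ pointwise in $t$ is equivalent to the Lindeberg condition \eqref{LFcond}. A preliminary observation, used in both directions, is the asymptotic negligibility $\max_{1\le i\le n}|1-\phi_{n,i}(t)|\to 0$ for each fixed $t$: combining $|1-\phi_{n,i}(t)|\le t^2\sigma_i^2/(2s_n^2)$ with the hypothesis $\sigma_n/s_n\to 0$ (and, for necessity, using that the Lindeberg condition controls the remaining tail) gives this uniform smallness. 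Negligibility allows one to pass freely between $\prod_i(1-z_{n,i})$ and $\exp(-\sum_i z_{n,i})$ via the elementary inequality $|\prod_i(1-z_i)-\prod_i e^{-z_i}|\le\sum_i|z_i|^2$ valid for small $|z_i|$.

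For sufficiency, I would use the Taylor bound $|e^{iu}-1-iu+u^2/2|\le\min(|u|^3/6,u^2)$, apply it with $u=tY_{n,i}$, and take expectations. Splitting each expectation at the set $A_{n,i}(\eps):=\{|X_i-E[X_i]|\ge\eps s_n\}$, the cubic estimate on $A_{n,i}^c$ contributes at most $|t|^3\eps\,\sigma_i^2/(6s_n^2)$, while the quadratic estimate on $A_{n,i}$ is controlled directly by the Lindeberg tail. Summing over $i$, then letting $\eps\to 0$ after $t$ is fixed, yields $\sum_i|\phi_{n,i}(t)-(1-t^2\sigma_i^2/(2s_n^2))|\to 0$, and the product-exponential comparison above gives $\prod_i\phi_{n,i}(t)\to e^{-t^2/2}$; L\'evy's continuity theorem completes the sufficiency direction.

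For necessity, assume the CLT conclusion and $\sigma_n/s_n\to 0$. L\'evy's theorem yields $\prod_i\phi_{n,i}(t)\to e^{-t^2/2}$ uniformly on compact sets; negligibility justifies taking complex logarithms termwise, and after extracting real parts the convergence may be rewritten as $\sum_i E[1-\cos(tY_{n,i})]\to t^2/2$ for every $t$. Splitting this sum at $\{|Y_{n,i}|\ge\eps\}$ and using $1-\cos u\le u^2/2$ on the small set bounds the small-set contribution by $t^2/2$, so the large-set contribution is asymptotically non-negative and bounded above. A quantitative choice of $t$ large compared to $1/\eps$, combined with the cruder bound $1-\cos u\ge c u^2$ for $|u|$ not too large, then forces $\sum_i E[|Y_{n,i}|^2;|Y_{n,i}|\ge\eps]\to 0$, which is the Lindeberg condition in the dimensionless form equivalent to \eqref{LFcond}.

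The main obstacle is the necessity direction. Sufficiency is a mechanical bookkeeping exercise once the Taylor bound and negligibility are in hand, but necessity requires extracting a tail estimate from convergence of characteristic functions, and this needs a delicate interplay between the free parameter $t$ (eventually sent to infinity) and the fixed $\eps$. In particular, the termwise logarithm step must be justified by uniform smallness of $1-\phi_{n,i}(t)$ in $i$, and this is precisely where Feller's hypothesis $\sigma_n/s_n\to 0$ becomes indispensable: without it the product-to-sum reduction breaks down and one can no longer isolate individual tails from the collective limit.
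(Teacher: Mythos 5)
The paper offers no proof of Proposition \ref{LF} to compare against: it is quoted as the ``well-known Lindeberg--Feller theorem'' and used as a black box in the proof of Corollary \ref{normalcor}. Judged on its own, your argument is the standard characteristic-function proof (Lindeberg's Taylor-expansion estimate for sufficiency, Feller's converse via real parts of $\sum_i\log\phi_{n,i}$ for necessity) and is essentially correct as a sketch. Three small points. First, the uniform negligibility $\max_{i\le n}\sigma_i/s_n\to 0$ does follow from the stated hypotheses, but via a two-step argument (split $i\le N$ from $i>N$ using $s_n\to\infty$ and monotonicity of $s_n$), not directly from $\sigma_n/s_n\to 0$; your parenthetical invoking the Lindeberg condition ``for necessity'' is backwards --- you cannot assume it there, and you do not need to. Second, in the necessity step the large-set contribution $\sum_iE[1-\cos(tY_{n,i});\,|Y_{n,i}|\ge\eps]$ is bounded by $2\sum_iP(|Y_{n,i}|\ge\eps)\le 2/\eps^2$ via Chebyshev applied to the full second moments; the bound $1-\cos u\ge cu^2$ you mention is not what closes the argument --- what closes it is $\tfrac{t^2}{2}L_n(\eps)\le \tfrac{2}{\eps^2}+o(1)$ followed by $t\to\infty$. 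Third, you silently work with the standard Lindeberg functional $\tfrac{1}{s_n^2}\sum_{i=1}^nE[|X_i-E[X_i]|^2;\,|X_i-E[X_i]|\ge\eps s_n]$, whereas the paper's display \eqref{sneps} involves only the $n$th summand and squares the threshold variable; that appears to be a typo in the paper, and your reading is the one under which the theorem is true, but the discrepancy deserves an explicit remark rather than the phrase ``equivalent to \eqref{LFcond}.''
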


 \bc\label{normalcor}
 For $\alpha_m=\alpha(x)$ satisfying \eqref{suminf} for $\alpha(\cdot)$ continuous and monotone decreasing as
 a function over the reals, $x_m$ is asymptotically normal.
 Furthermore, the rate of convergence is bounded by $\sigma_m/s_n$.
 \ec

 \begin{proof}
 Evidently, ${\rm Var}[\theta]=1$, whence 
 $$
 \sigma_m^2:={\rm Var}[ X_m]= \Big( - 2 \alpha_m /\Pi_{i=1}^n(1-2\alpha_i)\Big)^2
 \sim 4\alpha_m^2 e^{-4\sum_{i=1}^m \alpha_i }\to \infty
 $$
 as $m\to \infty$, by assumption \eqref{suminf}, and thus
 $ s_n^2:=\sum_{i=1}^n \sigma_i^2 \to \infty $ as $m\to \infty$, as well.

	 More precisely, by the integral test, $\sigma_m^2\sim 4\alpha(m)^2 e^{2\int_1^m \alpha(z)dz}$,
	 while 
	 $ s_n^2 \sim \int_0^n 4\alpha(m)^2 e^{4\int_1^m \alpha(z)dz}dm, $
	 hence, noting that 
	 $(d/dm)e^{4\int_1^m \alpha(z)dz}= 4\alpha(m) e^{4\int_1^m \alpha(z)dz}$, and integrating by parts,
	 we obtain
	 $$
	 s_n^2 \sim \alpha(m)^2 e^{4\int_1^m \alpha(z)dz}|_0^n -
	 \int_0^n \alpha'(m) e^{4\int_1^m \alpha(z)dz}dm.
	 $$
	 Using $\alpha'<0$ gives then
	 $$
	 s_n^2 \gtrsim \alpha(m)^2 e^{4\int_1^m \alpha(z)dz}|_0^n \sim \alpha_m^{-1} \sigma_m^2,
	 $$
	 and thus $\lim_{n\to \infty}\sigma_n/s_n\sim \lim_{n\to \infty} \sqrt{\alpha_n} = 0$,
	 verifying the hypotheses of Proposition \ref{LF}.

	 It follows that $y_m=\sum_{n=1}^m X_n$ is asymptotically normal if and only if the Lindeberg-Feller condition
	 \eqref{LFcond} is satisfied. But, noting that in our case $|X_n-E[X_n]|^2= {\bf Var}[X_n]=\sigma_n^2$ 
	 with probability one, we see that \eqref{LFcond} is equivalent to the hypothesis $\sigma_n/s_n\to 0$
	 already verified. This proves that $y_m$ is asymptotically normal, hence $x_m$, being a constant multiple
	 of $y_m$, is asymptotically normal as well.
	 Finally, the rate of convergence follows as in \cite{LZ,LM} by Esseen's theorem \cite[Section XVI.5]{F}
	 since the third moment of any random variable is bounded by the product of its supremum times its second
moment.
 \end{proof}

 Having shown that $x_m$ is asymptotically normal, we need only determine its mean and variance to
 describe its asymptotic behavior.  But, these may be found exactly by deterministic recurrence relations,
 as we now show.

 \begin{proposition}\label{recprop}
	 Under the assumptions of Corollary \ref{normalcor}, we have for expectation $E_m:=E[x_m]$,
	 second moment $F_m:=E[|x_m|^2]$, and variance $V_m:={\rm Var}[x_m]$
	 the recursions
	 \ba\label{recursions}
	 E_{m+1}&=(1-2\alpha_m) E_m,\\
	 F_{m+1}&=(1-2\alpha_m)^2 F_m +  4\alpha_m^2,\\
	 V_{m+1}&=(1-2\alpha_m)^2 V_m +  4\alpha_m^2,
	 \ea
	 giving solution formulae
	 \ba\label{recsolns}
	 E_{m}&= \Pi_{i=1}^{m-1}(1-2\alpha_i) E_1,\\
	 F_{m}&=\Pi_{i=1}^{m-1}(1-2\alpha_i)^2 F_1 + \sum_{j=1}^{m-1}
	 4\alpha_j^2  \Pi_{i=j}^{m-1}  (1-2\alpha_i)^2,\\
	 V_{m}&=\Pi_{i=1}^{m-1}(1-2\alpha_i)^2 V_1 + \sum_{j=1}^{m-1}
	 4\alpha_j^2  \Pi_{i=j}^{m-1}  (1-2\alpha_i)^2.
	 \ea
 \end{proposition}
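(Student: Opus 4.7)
The plan is to treat the three recursions in \eqref{recursions} one at a time, directly from the stochastic linear recursion \eqref{egsgd}, and then iterate each to obtain the closed-form solutions \eqref{recsolns}. Throughout we exploit the independence of $\theta_m$ from $x_m$ (since $x_m$ depends only on $\theta_1,\dots,\theta_{m-1}$) together with $E[\theta_m]=0$ and $E[\theta_m^2]=1$.

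For the expectation, I would simply apply $E[\cdot]$ to both sides of $x_{m+1}=(1-2\alpha_m)x_m-2\alpha_m\theta_m$ and use $E[\theta_m]=0$ to conclude $E_{m+1}=(1-2\alpha_m)E_m$. For the second moment, I would square the recursion to obtain
\[
x_{m+1}^2=(1-2\alpha_m)^2 x_m^2 - 4\alpha_m(1-2\alpha_m)\theta_m x_m + 4\alpha_m^2 \theta_m^2,
\]
then take expectation, using independence of $\theta_m$ and $x_m$ to factor $E[\theta_m x_m]=E[\theta_m]\,E[x_m]=0$, and $E[\theta_m^2]=1$, to arrive at $F_{m+1}=(1-2\alpha_m)^2F_m+4\alpha_m^2$. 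For the variance, I would use the identity $V_m=F_m-E_m^2$ together with the first two recursions: a direct substitution gives $V_{m+1}=F_{m+1}-E_{m+1}^2=(1-2\alpha_m)^2(F_m-E_m^2)+4\alpha_m^2=(1-2\alpha_m)^2 V_m+4\alpha_m^2$.

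To obtain \eqref{recsolns}, I would iterate each recursion. The recursion for $E_m$ is homogeneous, yielding $E_m=\Pi_{i=1}^{m-1}(1-2\alpha_i)E_1$ by induction. The recursions for $F_m$ and $V_m$ are inhomogeneous linear recursions of identical form, and both may be solved by discrete variation of constants/Duhamel's principle, exactly as in the solution of \eqref{Frep} in the proof of Proposition \ref{main2}: the homogeneous part contributes $\Pi_{i=1}^{m-1}(1-2\alpha_i)^2$ times the initial value, and the particular solution coming from the inhomogeneous term $4\alpha_j^2$ at step $j$ is propagated forward by the factor $\Pi_{i=j}^{m-1}(1-2\alpha_i)^2$, summed over $j=1,\dots,m-1$.

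There is no genuine obstacle here: the argument is a routine application of linearity of expectation, independence, and iteration of a first-order linear recursion. The only point requiring care is to observe that $x_m$ is measurable with respect to $\theta_1,\dots,\theta_{m-1}$ and hence independent of $\theta_m$, which justifies the factorization $E[\theta_m x_m]=0$ used in deriving the second-moment recursion; this is immediate from the recursive definition \eqref{egsgd} and the independence of the $\{\theta_m\}$.
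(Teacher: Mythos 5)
Your proposal is correct and follows essentially the same route as the paper's proof: take expectations of \eqref{egsgd} for $E_m$, square and take expectations for $F_m$, deduce the $V_m$ recursion from $V_m=F_m-E_m^2$, and solve by discrete variation of constants. Your explicit justification of $E[\theta_m x_m]=0$ via measurability of $x_m$ with respect to $\theta_1,\dots,\theta_{m-1}$ is a welcome bit of added care, but the argument is the same.
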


 \begin{proof}
	 The first assertion follows immediately from \eqref{egsgd} and $E[\theta]=0$.
	 Squaring both sides of \eqref{egsgd} we obtain
	 $$
	 \begin{aligned}
	 x_{m+1}^2-x_m^2&= \Big(x_m(1-2\alpha_m)  -2\alpha_m \theta\Big)^2 - x_m^2\\
		 &= (1-2\alpha_m)^2 x_m^2 - 4(1-2\alpha_m)x_m \alpha_m \theta
		 + 4\alpha_m^2 \theta^2 -x_m^2,
	 \end{aligned}
	$$
	 whence, taking expectations using $E[\theta]=0$, $E[\theta^2]=1$, and rearranging, we obtain
	 the asserted recurrence for $F_m$, and thus, by  $V_m=F_m-E_m^2$, the asserted recurrence for $V_m$.
	 The formulae \eqref{recsolns} then follow by (discrete) variation of constants, similarly
	 as in the proof of Corollary \ref{normalcor}.
 \end{proof}

 \br\label{egrmk}
 From \eqref{recursions}-\eqref{recsolns}, we see that for this example, convergence of the expected value exactly
 follows that of the deterministic case, while accessing half of the data points for each step.
 Meanwhile, the first term of the variance formula gives exactly the contribution of the deterministic case,
 while the second accounts for the contribution of counterbalancing stochastic effects.
 \er

 \br\label{sharprmk}
	The variance formula \eqref{recsolns}(iii), together with the fact that $|\nabla f(x)|\sim |x|^2$,
	shows that \eqref{suminf} and $\lim_{m\to \infty} \alpha_m=0$ are both necessary for
	convergence to zero of $E[|\nabla f(x_m)|]^2$. For, the first term of
	\eqref{recsolns}(iii) does not go to zero unless \eqref{suminf} holds, while the third term
	includes the contribution $4\alpha_{m-1}^2(1-2\alpha_{m-1})\sim \alpha_{m-1}^2$, 
	hence does not go to zero unless $\lim_{m\to \infty} \alpha_m=0$. 
 \er

 \smallskip
 {\bf Case ($\alpha_m=c/m$).}
 Depending on the choice of step size $\alpha_m$, either the deterministic or the stochastic error may dominate.
 For example, in the case $\alpha_m=c/m$, the stochastic part is order 
 $$
 \begin{aligned}
	 \int_1^m \alpha_i^2 e^{-4c\int_{i}^m j^{-1}dj} di&= \int_1^m c^2 i^{-2} (m/i)^{-4c} di\\
	 & = c^2 m^{-4c} \int_1^m  i^{4c-2}  di\\
	 &\sim \begin{cases}
		 m^{-4c}  & c<1/4,\\
		 m^{-1} & c \geq 1/4,
	 \end{cases}
 \end{aligned}
 $$
 while the deterministic part is order $e^{-4c\log m}= m^{-4c}$.
Thus, for $c<1/4$, the stochastic contribution decays at the same rate $m^{-4c}$ as the deterministic part.
For $c>1/4$, the stochastic part decays as slower $m^{-1}$ rate, giving convergence slower than the
$m^{-4c}$ deterministic rate.
Note that we require in general that $\alpha_m$ be small, both in our estimates by Taylor expansion
and in order to avoid overshoot and other undesirable numerical behavior. Indeed, for fixed-step
gradient descent, the step size is taken less than $1/L$, where $L$ is the maximum of the Hessian,
in this case $1/2$, giving $c>1/2$ as a practical upper bound.

 \smallskip
 {\bf Case ($\alpha_m=cm^{-p}$, $1/2<p<1$).}
 In the case $\alpha_m=cm^{-p}$, $1/2<p<1$ on the other hand,
 we have a deterministic decay rate of 
 $$
 e^{-4\sum_{i=1}^{m-1} \alpha_i} \sim e^{-4c\int_1^m i^{-p}di}\sim e^{-4c m^{1-p}}.
 $$
 Meanwhile, the stochastic contribution is asymptotic to
 $$
 \int_1^{m} \alpha_i^2 e^{-4c(m^{1-p}- i^{1-p})} di =
 c^2 e^{-4c m^{1-p}} \int_1^{m} i^{-2p} e^{4ci^{1-p} } di .
 $$
 Noting that 
 $(d/di) e^{4ci^{1-p}}= 4c(1-p)i^{-p}$, we may integrate by parts to obtain
 $$
 c^2 e^{-4c m^{1-p}} \Big(  i^{-p} e^{4ci^{1-p}}|_1^m + p \int_1^{m} i^{-p-1} e^{4ci^{1-p}} di \Big).
 $$
 Observing that the second term on the righthand side is vanishingly small compared to the first,
 by $i^{-2p}\gg i^{-p-1}$, we see that the stochastic portion is asymptotic to
 $$
 c^2 e^{-4c m^{1-p}}   m^{-p} e^{4c m^{1-p} }= c^2 m^{-p}, 
 $$
 algebraic, hence {\it always} much larger than the exponentially decaying
 deterministic part, independent of the value of $c$.

 \smallskip
 {\bf Case ($\alpha_m\equiv \alpha \ll 1$).} Finally, we consider the interesting (nonconvergent) 
 case of constant but small $\alpha_m$, for which \eqref{recursions} become scalar linear
 autonomous discrete dynamical systems, with exponents $1-2\alpha, (1-2\alpha)^2 <1$.
 Evidently, the shift map for variance $V_n$ has a unique attracting fixed point 
 \be\label{fixedpt}
 V_*=\frac{4\alpha^2}{(1-2\alpha)^2},
 \ee
 toward which the variance approaches exponentially with rate $(1-2\alpha)^{2n}$,
 and a limiting standard of deviation $2\alpha/ (1-2\alpha)$.
 That is, for $\alpha \ll 1$, solutions of \eqref{egsgd}, though they do not converge to the minimum $x=0$
 of $f(x)$, {\it do} converge to a neighborhood of order $\alpha$ of $x=0$.
 In practice this may be quite satisfactory and for its simplicity this choice is often used.

 \smallskip
 \textbf{Numerical comparison.} In Figure \ref{ConvexEmpericalVSPredictedMonteCarlo}, we compare the empirical distribution obtained by
 Monte Carlo simulation starting with a fixed $x_1$
 with the normal distribution of expectation and variance determined by \eqref{recursions},
 obtaining excellent correspondence.

\begin{figure}
    \centering
    \textbf{Numerics for Convex Example}\par\medskip
    \includegraphics[scale=.6]{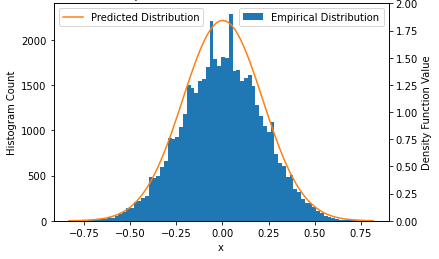}
    \caption{\small
		A histogram of the results of a Monte-Carlo simulation of SGD for the convex function \eqref{xeq} using $50,000$ trials plotted simultaneously with the density function predicted by \eqref{recursions}.
    }
    \label{ConvexEmpericalVSPredictedMonteCarlo}
\end{figure}

\subsection{Nonconvex case}\label{s:nonconvex}
Taking again $d=b=1$, $N=2$, we may take $f(x)$ to be any nonconvex function, for example
\ba\label{ncf}
f(x)&= x^4+3x^3-4x+2,\\
f_1(x)&=f(x)-g(x),\\
f_2(x)&=f(x)+g(x). 
\ea
For the simplest case $g(x)= \sigma x$ this yields iteration
\ba\label{noncon}
x_{m+1}&=x_m - \alpha_m f'(x_m) + \alpha_m \sigma \theta\\
&= x_m - \alpha_m ( f'(x_m) + \sigma \theta),
\ea
where $\theta=\pm 1$ with probability $1/2$.
It is no longer linear, so does not admit an explicit variation of constants solution;
nor can it be expected to yield an asymptotically normal distribution.
And, indeed, Monte Carlo simulation starting with a fixed $x_1$, as depicted in Figure \ref{fig:nonconvexSGD} below,
yields an empirical distribution far from normal, as we see by comparison with a normal distribution 
of equal expectation and variance.

\begin{figure}
    \centering
    \textbf{Empirical vs Normal Distribution for Nonconvex Function}\par\medskip
    \begin{subfigure}[b]{0.475\textwidth}
        \centering
        \includegraphics[width=\textwidth]{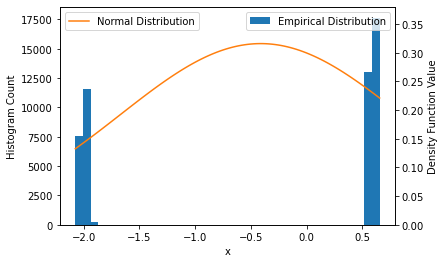}
        \caption[]%
        {{\small $\sigma=1$, $c=0.1$}}    
        \label{fig:c1s1}
    \end{subfigure}
    \hfill
    \begin{subfigure}[b]{0.475\textwidth}  
        \centering 
        \includegraphics[width=\textwidth]{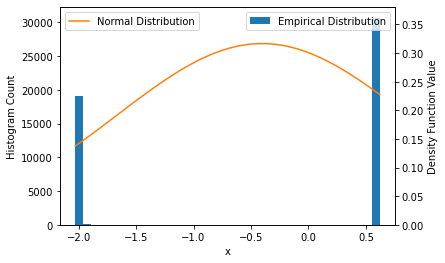}
        \caption[]%
        {{\small $\sigma=1$, $c=0.01$}}    
        \label{fig:c01s1}
    \end{subfigure}
    \vskip\baselineskip
    \begin{subfigure}[b]{0.475\textwidth}   
        \centering 
        \includegraphics[width=\textwidth]{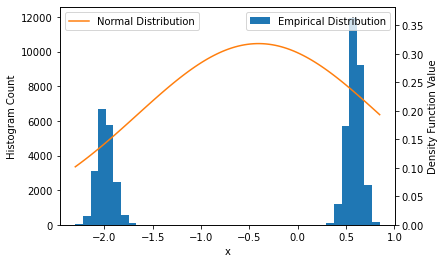}
        \caption[]%
        {{\small  $\sigma=10$, $c=0.01$}}    
        \label{fig:c01s10}
    \end{subfigure}
    \hfill
    \begin{subfigure}[b]{0.475\textwidth}   
        \centering 
        \includegraphics[width=\textwidth]{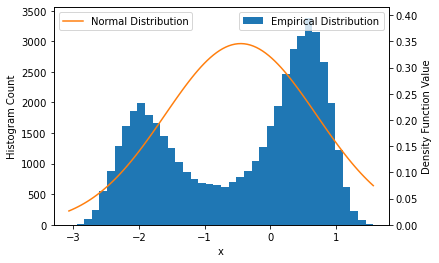}
        \caption[]%
        {{\small $\sigma=50$, $c=0.01$}}    
        \label{fig:c01s50}
    \end{subfigure}
    \caption[]
    {\small Several histograms comparing Monte-Carlo simulations of SGD for the nonconvex function \eqref{ncf}, varying key parameters.  Each uses $50,000$ trials of $500$ SGD iterations plotted simultaneously with a normal distribution of equal mean and variance.  The value of $\sigma$ determines the function $g(x)$, which varies across the figures as labeled.  For all figures $\alpha(m)=\frac{c}{\log(1+m)}$ is used for the stepsize, with variable values of $c$.} 
    \label{fig:nonconvexSGD}
\end{figure}

For reference, we display in Figure \ref{NonconvexGraph} a graph of the function $f(x)$, indicating clearly the
presence of two local minima. Intuitively, some portion of the probability distribution will be trapped near one
local minimum and the rest near the other, hence bivariate rather than normal.

\begin{figure}
    \centering
    \textbf{Nonconvex Function Example}\par\medskip
    \includegraphics[scale=.6]{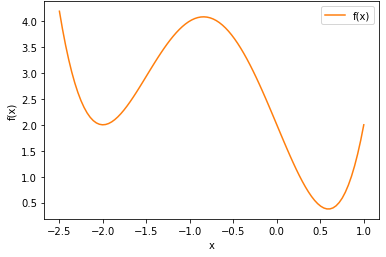}
    \caption{\small
	A plot of the nonconvex function $f(x)$ defined by \eqref{ncf}.
    }
    \label{NonconvexGraph}
\end{figure}

\smallskip
\textbf{The square summability condition.} We now investigate numerically the square summability condition 
\eqref{sumfin} and its relation to convergence in the nonconvex case,
considering the problem \eqref{ncf} and the associated (SGD) scheme \eqref{noncon} for various
choices of $\sigma$ and $\alpha_m$.

\medskip
{\bf Small $\sigma$.} Before beginning, we first make some easy observations about
the small-$\sigma$ case, in which the $\nabla f$ part of \eqref{noncon} dominates the
stochastic term.  Since critical points are nondegenerate,
trajectories are thus clearly trapped- deterministically, that is, for any path-realization,
not probabilistically- in uncertainty balls of radius $O(\sigma)$ around local minima 
if they ever enter, and blocked out of $O(\sigma)$ radius ball around the local maximum if they ever leave,
since $\nabla f$ dominates $O(\sigma)$ corrector in this case. Inside the local max ball, meanwhile, $\nabla f$
is pushing out, so the process is at least as likely to leave as in a standard unbiased random walk with step sizes $\alpha_m$,
so eventually leaves almost surely. Similarly, in the invariant set outside the repelling ball around the local
max, the process is at least as likely to reach a minumum ball as is the unbiased walk, and so it does so almost
surely.  The end result is thus, provably, convergence to support within the two uncertainty balls around the
local minima: a bimodal distribution.  
Eventually, therefore, we reduce for the parts of the solution trapped in uncertainty balls to the 
approximately convex case treated in Proposition \ref{main2}, leading to the complete conclusion \eqref{concon}
therein.  
It is an interesting further question whether the trapped local minimum distributions behave approximately 
as in the quadratic case, say, asymptotically in $\sigma$ as $\sigma\to 0$.

\br\label{bdrmkk}
The above analysis applies to general systems for which we can deterministically bound the stochastic part from
above by a small scaling constant. But, it is mainly theoretical, as this seems unlikely to occur for the canonical
example of least squares/loss functions. 
\er

\medskip
{\bf Large $\sigma$.} In this case, things are far from clear, as individual steps are dominated by the stochastic
random walk part of \eqref{noncon}.  
Indeed, it sheds a nice light on the strength of the previous results on sufficiency, which seem hardly
obvious even for the square summable case.
The results displayed in Figure \ref{fig:nonconvexSGD}(c)-(d) for the large values $\sigma=10, 50$ 
suggest convergence for the very slowly-decaying choice $\alpha_m=c/\log(1+m)$.
Indeed, all the experiments of this paper suggest convergence also for constant $\alpha_m\equiv c \ll 1$,
in this case ``uncertainty balls'' centered at critical points and not critical points themselves,
with radius going to zero as $c\to 0$.

\subsection{Higher-dimensional generalizations}\label{s:hdgen}
For $N=2$, $b=1$, one can take $f:\R^d\to \R$ quadratic and $f_1=f+g$, $f_2=f-g$ with $g(x)=c\cdot x$ linear to
obtain again a linear, hence solvable by variation of constants, stochastic iteration
\be\label{gsolve}
x_{m+1}=x_m - \alpha_m \nabla f + \theta c,
\ee
where $x$, $c$ are vectorial and $\theta$ is $\pm 1$ with probability $1/2$.
Note that the requirement $f\geq 0$ imposes convexity as in the single-variable case $d=1$.
The form is rather special, since stochastic effects are unidirectional in direction $c$ alone.
Thus, starting with a Dirac mass,
one may conclude asymptotic (2d) normality, but with variation in the $c$ direction only.

This low-dimensional artifact may be remedied by taking $N\geq d+1$, so that generically variation will be full rank.
For general $N$, $d$, $f$ quadratic and all $g_i:=f-f_i=c_ix+d_i$ linear, $\sum_{i=1}^Nc_i=\sum_{i=1}^Nd_i=0$,
we obtain a similar form
$ x_{m+1}=x_m - \alpha_m \nabla f + \sum_{i=1}^N \theta_i c_i, $
where vector $\theta$ takes values $e_j$ (standard basis elements) with equal likelihood
$P(\theta=e_j)= 1/N$.
This in turn may be reduced by variation of constants to a matrix-valued variable coefficient
random walk in directions $c_1,\dots, c_N$, where, recall, $C_N=1- \sum_{i=1}^{N-1}c_i$.
Thus, for $N\geq d+1$, generically, $\Span\{c_1, \dots, C_{N-1}\}=\R^d$, 
and so stochastic effects correspond to a nondegenerate diffusion.
Other than this latter effect, the sizes of $N$ and $b$ seem qualitatively not so important, as for $\alpha_m\ll 1$
the Law of Large Numbers should give aggregate short-time stochastic behavior that is approximately normal 
in any case.

 \section{Continuous-time analog and Fokker-Planck approximation}\label{s:FP}
 More generally, the multi-d recursion
 \be\label{md}
 x_{m+1}=x_m - \alpha_m \nabla f + \alpha_m \sum_{i=1}^N \theta_i \nabla f_i(x), 
 \ee
$N$ arbitrary, $x\in \R^d$, $b=1$, and its generalization to $b\geq 1$ 
suggest in the small step size limit $\alpha_m\to 0$
\be\label{stoch}
\dot x(t) = - \alpha(t)u(x(t)) 
+ \alpha(t) \sigma(x) dW_t,  \qquad \sigma=\Sigma^{1/2},
\ee
where $u(x):=\nabla f(x)$, $\Sigma(x)$ is the (symmetric positive semidefinite) covariance matrix of 
$$
\sum_{i=1}^N \theta_i \nabla f_i(x),
$$
and $W_t$ is $d$-dimensional Brownian motion.
We shall not attempt to prove such a result, but only consider it as a heuristic analog, similarly
as we did in Section \ref{s:cont} for the deterministic case.

\subsection{Fokker-Planck approximation}\label{s:FPapprox}
The evolution of the probability density $\rho(x)$ of a solution $X(t)$ of stochastic process \eqref{stoch}
is governed \cite{F,P,K} by the {\it Fokker--Planck equation}
\be\label{FPeq}
\rho_t -  \sum_i \partial_{x_i}( \alpha(t) u_i(x)\rho)= (1/2) \sum_{i,j} (\alpha(t) \Sigma_{ij}(x) \rho), 
\qquad \rho\in \R, \; x, u\in \R^d, \; \Sigma\in \R^{d\times d}.
\ee
In the deterministic case $\Sigma\equiv 0$, this reduces simply to conservation of probability under
convection by vector field $-\alpha(t) u(x)$.

Viewing \eqref{stoch} as a qualitative approximation of \eqref{md}, we 
thus obtain \eqref{FPeq} as a qualitative approximation of the evolution of the probability density associated
with \eqref{md}.
This gives us another approach besides Monte Carlo for numerical investigation of behavior of \eqref{SGD},
namely, numerical approximation of the solution to convection-diffusion equation \eqref{FPeq}, as we now describe.

\subsubsection{Relations to Physics, and solution in a simple case}\label{s:physics}
For the simple example \eqref{xeq}, for which \eqref{stoch} corresponds to a diffusive harmonic
oscillator equation, \eqref{FPeq} reduces to
\be\label{newx}
\rho_t + \alpha(t) (2x \rho)_x= 2 \alpha(t)^2 (\rho)_{xx},
\ee
which may be solved exactly by essentially the same variation of constants coordinate change as in 
Section \ref{s:egs}, namely, $x= e^{-\int_0^t 2\alpha(s)ds} y$, 
$\rho(x)=u(y)(dy/dx)= u(y) e^{\int_0^t 2\alpha(s)ds}$, giving 
$$
u_t = \alpha(t)^2 e^{\int_0^t 4\alpha(s)ds} u_{yy}.
$$
This in turn may be reduced to the heat equation by the change of variable $t\to \tau$,
where 
$$
d\tau/dt= \alpha(t)^2 e^{\int_0^t 4\alpha(s)ds}.
$$

Note that conversion to the heat equation by time-dependent coordinate changes, which preserve the property of
normality, under appropriate assumptions on the initial density $\rho|_{t=0}$ 
yields the result found by Lindeberg-Feller theorem in the discrete case, of convergence 
toward a Gaussian distribution, by heat equation properties.
Moreover, integration of \eqref{newx} against $x$, followed by integration by parts,
yields for expectation $E(t):=\int_{-\infty}^{\infty}x\rho(x)dx$
the ODE $E'(t)= -2\alpha(t)E(t)$ as in the deterministic case $dX= -2\alpha(t) X$,
while integration against $x^2$ after integration by parts
yields for second moment $F(t):= \int_{-\infty}^{\infty}x^2\rho(x)dx$ the ODE 
\be\label{Fode}
F'(t)= -4\alpha(t) F(t)+ 4 \alpha(t)^2,
\ee
completing the description of asymptotic behavior (cf. \eqref{recursions}.

In the special case $\alpha(t)\equiv 1$, \eqref{xeq}, \eqref{stoch} reduces to the Ornstein--Uhlenbeck process
with applications in Brownian motion \cite{UO} and mathematical finance \cite{B,LL}.
This arises also in the study of convergence of solutions of the heat equation to scale-invariant flow.
In this case, \eqref{Fode} yields convergence of $F$ to a limiting equilibrium value $F_*=\alpha$,
similarly as in the discrete case.
Note that this, together with the above observations regarding convergence to normality recover the 
classical phenomenon of convergence of solutions $\rho$ to an explicit equilibrium Maxwellian distribution.

More generally, \eqref{stoch} with $\alpha\equiv 1$ describes Brownian dynamics on an arbitrary energy 
landscape.
See \cite{dMRV} for related discussions of equilibrium measures on general setting.

\subsection{Numerical approximation}\label{s:num}
To simulate the described Fokker-Planck equations, we use a Crank-Nicholson scheme with adaptive upwinding.  Upwinding is a common practice for transport type equations, where gradients are calculated with a bias for the direction the transport is coming from.  This allows for greatly increased stability.  Here we do not know the direction of motion a priori, so we determine the direction with a function $\beta$ of the gradient.  For our one dimensional example, we simply use $\beta(\nabla F) = \sgn(\nabla F)$, though this can easily be generalized for higher dimensions.  The precise method is described in \cite{CK}.

\begin{figure}
        \centering
        \textbf{Fokker-Planck Numerical Simulation}\par\medskip
        \begin{subfigure}[b]{0.475\textwidth}
            \centering
            \includegraphics[width=\textwidth]{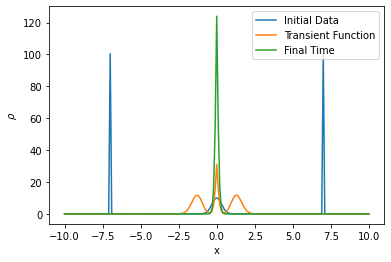}
            \caption[]%
            {{\small Simulation of the convex function \eqref{xeq}}}    
            \label{fig:FP_convex}
        \end{subfigure}
        \hfill
        \begin{subfigure}[b]{0.475\textwidth}  
            \centering 
            \includegraphics[width=\textwidth]{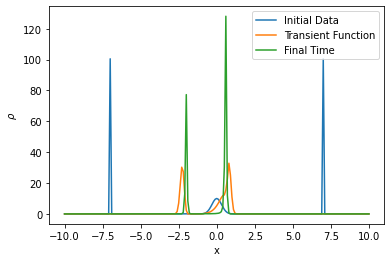}
            \caption[]%
            {{\small Simulation of the nonconvex function \eqref{ncf}}}    
            \label{fig:FP_nonconvex}
        \end{subfigure}
        \caption[]
        {\small Three time slices of a simulation of the Fokker-Planck equations \eqref{newx} corresponding to the previous examples of convex and nonconvex functions.  We see the qualitative agreement of the method with previous Monte-Carlo and analytic results.  For both the initial condition is given by a small Gaussian centered at 0 and an approximation of a point mass far from the center.} 
        \label{fig:FP}
    \end{figure}

\section{Application to 2- and many-player games}\label{s:games}
In this final section we describe a novel application to 2-player games and multi-player games involving
two (asynchronous) coalitions: more generally, to arbitrary problems of form
\be\label{maxprob}
\min \phi(y), \qquad \phi(y) := \max_{1\leq j\leq N}\{ \phi_j(y)\},
\ee
without loss of generality $\phi_j\geq \eta>0$ for all $j$.

The idea is to first approximate the maximum on the righthand side of \eqref{maxprob}
following \cite{BBDJZ} by the smoothed version
$\min \phi_p(y):= \Big(\sum_{1\leq j\leq N} \phi_j(y)^p \Big)^{1/p}$
given by the $\ell^p$ norm of $\{\phi_j\}$.
Defining $\Phi_p:=(1/N)\phi_p^p$, we then convert to the equivalent problem
\be\label{canphi}
\min \Phi_p(y):= (1/N) \sum_{1\leq j\leq N} \phi_j(y)^p
\ee
of canonical form \eqref{canform}, to which (SGD) may be readily applied.
For large problems, $N\gg 1$, the hope is that this will lead to substantial savings in computation time.

\subsection{2-player games}\label{s:2player}
Now, consider a strictly positive $M\times N$ 2-player game with payoff function $\Psi(x,y)=x^TAy$, 
where $x\in M$ and $y\in N$ are probability vectors representing random strategies for players one and two and 
$A$ is an $M\times N$ matrix with entries $A_{ij}\geq \eta>0$.
Following von Neumann's fundamental theorem of games \cite{vN}, the optimal (random) strategy for player two is
\be\label{cang}
\min_y \max_j A_j y,
\ee
where $A_j$ denotes the $j$th row of $A$.
Setting 
\be\label{ydef}
y=(\tilde y, y_N):=(y_1, \dots, y_{N-1}, 1-\sum_{j=1}^{N-1}y_j),
\ee
we find that \eqref{cang} may be expressed as a problem
\be\label{tcang}
\min_{\tilde y} \max_j \phi_j(\tilde y),\qquad \phi_j(\tilde y):= A_j y
\ee
of the form \eqref{maxprob}, with $\tilde y$ varying over the $(N-1)$-simplex $0\leq \tilde y_j\leq 1$,
$\sum \tilde y_j\leq 1$.

An important property of the original 2-player game problem \eqref{cang} is that it is {\it convex};
the following elementary observation shows that, for {\it even} smooothing exponents $p$, 
this important feature is inherited also in the smoothed, approximate problem version \eqref{canphi}
of \eqref{tcang}.

\begin{lemma}\label{smoothlem}
	For $p$ a positive even integer, the smoothed problem \eqref{canphi} associated with \eqref{tcang}
	is \emph{convex} on $\R^{N-1}$, as are, indeed, each of the individual terms $\phi_j(\tilde y)^p$.
	For $p$ an odd positive integer, \eqref{canphi} is convex where $B_j \tilde y\geq 0$ for all $j$,
	or equivalently $A_j\tilde y \geq 0$:
	in particular, on the feasible set $0\leq \tilde y_j\leq 1$, $\sum_j \tilde y_j\leq 1$.
\end{lemma}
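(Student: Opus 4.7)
The plan is to exploit two elementary facts: each $\phi_j(\tilde y)=A_j y$ is an \emph{affine} function of $\tilde y$ (after using \eqref{ydef} to eliminate $y_N$), and the scalar map $t\mapsto t^p$ is convex on $\R$ for positive even $p$ and convex on $[0,\infty)$ for positive odd $p$. Convexity is preserved under composition with an affine map, and under summation with nonnegative coefficients; applying these preservation principles to each term $\phi_j(\tilde y)^p$ and then averaging will immediately give convexity of $\Phi_p=(1/N)\sum_j \phi_j^p$. So my first step is simply to write $\phi_j(\tilde y)=A_{jN}+\sum_{k=1}^{N-1}(A_{jk}-A_{jN})\tilde y_k$, which identifies an affine map $\tilde y\mapsto \phi_j(\tilde y)$; setting $B_{jk}:=A_{jk}-A_{jN}$ gives $\phi_j(\tilde y)=A_{jN}+B_j\tilde y$, which fixes the notation used in the statement.

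Next I would verify the scalar convexity claim by computing $\frac{d^2}{dt^2} t^p = p(p-1)t^{p-2}$: for even $p\geq 2$ the exponent $p-2$ is even so the second derivative is nonnegative on all of $\R$, while for odd $p\geq 1$ it is nonnegative precisely on $\{t\geq 0\}$. Composing with the affine map from the previous step, together with the standard chain-rule argument (or equivalently, the definition of convexity applied directly to $\phi_j(\lambda \tilde y + (1-\lambda)\tilde y')^p$), then gives convexity of each $\phi_j^p$ on the full space $\R^{N-1}$ when $p$ is even, and on the half-space $\{\tilde y:\phi_j(\tilde y)\geq 0\}$ when $p$ is odd.

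For the odd case, the final step is to locate this half-space in the form stated. Since $A_{jN}\geq \eta>0$, the half-space $\{B_j\tilde y\geq 0\}$ is contained in $\{\phi_j(\tilde y)\geq 0\}$, giving the first equivalent description. For the feasible set containment, note that on the simplex $\{\tilde y_k\geq 0,\ \sum_k \tilde y_k\leq 1\}$ the vector $y$ built via \eqref{ydef} is a probability vector, so $\phi_j(\tilde y)=A_j y=\sum_k A_{jk} y_k\geq \eta>0$ by the strict positivity assumption on the entries of $A$; hence the feasible set is a subset of the region where each $\phi_j^p$ is convex. Summation over $j$ then transfers convexity to $\Phi_p$.

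There is essentially no substantive obstacle here — the content is just the composition-preserves-convexity principle combined with the parity of $p$. The only mildly subtle point worth handling with care is the odd-$p$ case, where one must not conflate the half-space $\{B_j\tilde y\geq 0\}$ appearing in the statement with the (larger, shifted) half-space $\{\phi_j\geq 0\}$ needed for the composition argument; the containment goes in the right direction because $A_{jN}\geq \eta>0$, and the feasible simplex sits inside both.
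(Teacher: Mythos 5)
Your proposal is correct and follows essentially the same route as the paper: both arguments reduce to composing the scalar map $t\mapsto t^p$, whose second derivative $p(p-1)t^{p-2}$ is nonnegative on all of $\R$ for even $p$ and on $[0,\infty)$ for odd $p$, with the affine map $\tilde y\mapsto \phi_j(\tilde y)$, the paper doing this via an explicit Hessian computation and you via the composition-preserves-convexity principle. Your treatment is in fact slightly more careful than the paper's, which writes $\phi_j(\tilde y)=B_j\tilde y$ as if it were linear and does not separately verify the feasible-set containment; your handling of the constant term $A_{jN}\geq\eta>0$ and of the simplex is a welcome tightening but not a different proof.
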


\begin{proof}
	Using \eqref{ydef}, we may express $\phi_j(\tilde y)=A_j y$ as $B_j \tilde y$, with $B_j$ constant. 
	Thus, $\phi_j(\tilde y)^p=(B_j \tilde y)^p$, giving
	$
	\nabla_{\tilde y}\phi_j= p(B_j \tilde y)^{p-1} B_j^T
	$
	and $ \nabla^2_{\tilde y}\phi_j= p(p-1)(B_j \tilde y)^{p-2} B_j^T B_j$.
	The latter is evidently positive semidefinite when either $p$ is even or $B_j \tilde y$ is nonnegative.
\end{proof}

Our basic theory then yields convergence of (SGD) for problem \eqref{tcang} for $p$ even and 
any monotone decreasing sequence of step sizes $\alpha_m> 0$ satisfying 
$\sum \alpha_j=\infty$, $\lim_{m\to \infty}\alpha_m=0$.
Here, we have ignored the constraint that $\tilde y$ lie in the feasible set $0\leq \tilde  y_j\leq 1$.
That simplification is valid if the original problem \eqref{cang} has an interior minimum in the feasible
set, since the minima for \eqref{tcang} lie near those of \eqref{cang} for $p$ sufficiently large.
However, in general, one must add a penalty function or other such modification to ensure
that iterates respect the feasibility conditions.

\begin{example}\label{geg}
	Taking $A=\bp 1&3\\2&1\ep$ and $y=(y, 1-y)$, we get 
	$\phi_1(y)= (y+3(1-y))= 3-2y$ and $\phi_2(y)= 2y +(1-y)= y+1$.
	For $p=2$, this gives the minimization problem $ \min_y \Phi_2(y)$ with 
	$$
	\Phi_2(y):= (1/2)(\phi_1(y)^2 + \phi_2(y)^2)= (1/2)(5y^2-10 y +10),
	$$
	and $\nabla \Phi_2= 5y-5$, giving a minimum $\nabla \Phi_2=0$ at $y=1$.
	The associated (SGD) scheme is
	\be\label{gSVD}
	y_{m+1}- y_m= -\alpha_m (5y_m-5) + \theta \alpha_m (3y_m -7),
	\ee
	where $\theta=\pm 1$ with equal probability $1/2$, convergent under our general theory for (SGD).
	By contrast, the solution of the original game problem \eqref{cang} satisfies $(A_1-A_2)(y, 1-y)^T=0$,
	or $2-3y=0$, giving $y=2/3$.
	More generally, one may check that the approximate minima given by different choices of $p$ approach the exact
	value $2/3$ with $O(1/p)$ relative error, i.e., around $\log_{10}p$ digits precision, 
	in keeping with the worst-case error 
	\be\label{wc}
	\|x\|_{\ell^\infty}\leq \|x\|_{\ell^p}\leq \|x\|_{\ell^\infty} e^{\log (n)/p}=
	\|x\|_{\ell^\infty} (1+ O(\log (n)/p))
	\ee
	for $x\in \R^n$, achieved for $|x_j|\equiv \|x\|_{\ell^\infty}$.
	For example, for $p=10$, the minimum of the smoothed function is achieved at $y\approx 0.71$,
	giving relative error of approximately $.05/0.6\approx 0.84$.
\end{example}

\begin{example}\label{RPSeg}
	Taking $A=\bp 2 & 1 & 3\\3& 2&1\\ 1&3&2\ep$ and $y=(y_1,y_2, 1-y_1-y_2)$, $\tilde y=(y_1,y_2)$,
	we get 
	$$
	\begin{aligned}
		\phi_1(\tilde y)&= 2y_1 +y_2 + 3(1-y_1-y_2))= 3-y_1-2y_2, \\ 
		\phi_2(\tilde y)&= 3y_1 + 2 y_2 + (1-y_1-y_2))= 1 + 2y_1+ y_2, \\ 
		\phi_3(\tilde y)&= y_1 + 3 y_2 + 2(1-y_1-y_2))= 2-y_1+ y_2. 
	\end{aligned}
	$$
	This may be recognized as the classical Rock-Paper-Scissors game with payoff boosted by $+2$
	to ensure positivity, with exact optimal strategy $y_1=y_2=1/3$ returning a value of $+2$.
	The associated (SGD) scheme is
	\be\label{gSVD2}
	\tilde y_{m+1}- \tilde y_m= - \sum_j \theta_j \alpha_m p \phi_j(\tilde y)^{p-1} \nabla_{\tilde y}\phi_j,
	\ee
	where $\theta=(1,0,0)$, $(0,1,0)$, or $(0,0,1)$ with equal probability $1/3$, 
	convergent under our general theory for (SGD) to the minimizer of 
	the smoothed, approximate problem \eqref{canphi}, with value $(3 \Phi_p)^{1/p}$ lying according to
	\eqref{wc} within error $\sim 1/p$ of the exact value $+2$.
\end{example}

\medskip

{\bf Issues.} The error bound \eqref{wc} is problematic, as the large $p$ necessary for
accuracy introduces large variations in $\phi_j^p$. This could perhaps be remedied by a multigrid approach,
increasing $p$ as successive iterations (presumably) shrink the computational domain.
Without some such modification, it is not clear whether this approach represents a potential tool
for practical application.

\subsection{Asynchronous coalitions in multi-player games}\label{s:nplayer}
We note that the same method can apply to the $(n-1)$ vs. $1$ ``asynchronized coalition game''\footnote{
	So called because players $2$-$n$ are allowed to coordinate their choices of mixed, or random, strategies,
	but not to synchronize these choices on any single round of play. See \cite{BBDJZ} for further
	discussion.}
\be\label{asynch}
\min_{y_1,\dots, y_{n-1}} \max_j \psi_j(y_1,\dots, y_{n-1})
\ee
studied in \cite{BBDJZ}, where $\psi(y_1,\dots, y_{n-1},x)$ is a multilinear payoff
function and 
$$
\psi_j=\psi_j(y_1,\dots, y_{n-1}, e_j),
$$
of which the simplest ($n=3$) version is
\be\label{aseg}
\min_{y_1,y_2} \max_j\{ y_1^t B_j y_2 \}, \qquad B_j\in \R^{N\times N}.
\ee
Unlike the 2-player version, this is in general a {\it nonconvex optimization problem}
with no relation to linear programming or other standard structures other than the form 
\eqref{maxprob} above.
That is, \eqref{maxprob} isolates 
the most primitive 
property associated with origins from a multiplayer game.

\begin{example}\label{randeg}
	Taking $N=2$, $B_1=\bp 2 & 1 \\3&2\ep$, $B_2=\bp 2 & 5 \\1&2\ep$ and $y_1=(w,1-w)$,
	$y_2=(z,1-z)$, the problem \eqref{aseg} becomes
\be\label{aseg2}
	\min_{w,z} \max_j\{ (w,1-w) B_j (z,1-z)^R \}= \min_{w,z} \max\{ 2+z-w, 2-2wz+3w-z\},
\ee
	or 
\be\label{phis}
\phi_1(w,z)= 2+z-w, \qquad \phi_2(w,z)= 2-2wz+3w-z,
\ee
$0\leq w, \, z \leq 1$, from which we obtain the (SGD) scheme
\ba\label{sd2}
	\bp w_{m+1}\\ z_{m+1}\ep&= \bp w_{m}\\ z_{m}\ep -\alpha_m \theta_1 p(\phi_1^{p-1} \nabla \phi_1) (w_m,z_m)  
	-\alpha_m \theta_2 p(\phi_2^{p-1}\nabla \phi_2)(w_m,z_m) \\
	&=
	\bp w_{m}\\ z_{m}\ep -\alpha_m \theta_1 p\phi_1(w_m,z_m)^{p-1} \bp -1\\1 \ep
	-\alpha_m \theta_2 p\phi_2(w_m,z_m)^{p-1} \bp 3-2z_m\\ -2w_m-1 \ep ,
\ea
where $\theta=(\theta_1,\theta_2)$ is equal to $(1,0)$ or $(0,1)$ with equal probability $1/2$.
	The exact problem \eqref{aseg2} may be seen to be minimized on the boundary.
	For, examining the curve $z=3w/(w+1)$ where $ 2+z-w = 2-2wz+3w-z$, 
and minimizing $2+z-w= 5- 3/(1+w)- w$, we find a unique
	interior critical point at $w=\sqrt{3}-1\approx .73$, which is a maximum.
	The exact minimizer is thus found on the boundary of the domain, where it is readily seen to
	occur at $(w,z)=(0,0)\, , (1,1)$ with value $2$. 
\end{example}

To handle boundary minima as in the above example, we suggest addition of a penalty function, for example
in the present case
\be \label{penalty}
\psi(w,z):= K (w^-)^d + K (z^-)^d + K ((w-1)^+)^d + K ((z-1)^+)^d
\ee
with $K>0$ sufficiently large. 

\br\label{temp} 
Example \eqref{randeg} has some interesting features. The first is that even though $\phi_j(y,z)>0$
for probability vectors, this does not necessarily hold for general $y,z$, and so 
$\sum_j \phi_j^{p}$ is nonnegative for $p$ even, but not necessarily positive.
Indeed, in the present case $\phi_1=\phi_2=0$ is evidently achieved at $w=z+2$ ($\phi_1=0$) and
$0= 2-2(z+2)z+3(z+2)-z= 8 -2z^2 -2z$ ($\phi_2=0$), or 
$ (w,z)=(1/2)(3+\sqrt{17}, -1 + \sqrt{17}) \approx (-.56, -2.56)$,
$(1/2)(3-\sqrt{17}, -1 - \sqrt{17})\approx (3.56, 1.56).  $
These are the unique global minima of $\phi_1^p+\phi_2^p$ for any $p$, lying outside the feasible region.
The second is that the minimax problem \eqref{aseg2} has minima $(w,z)=(0,0), \, (1,1)$ occurring on 
the boundary of the feasible set, which are neither
critical points nor local minima of the extended problem on the plane.
This explains the numerical results of convergence to points outside the feasible region.
\er

\begin{example}\label{randeg2}
	Taking $N=2$, $B_1=\bp 2 & 1 \\3&2\ep$, $B_2=\bp 2 & 2.5 \\1&2\ep$ and $y_1=(w,1-w)$,
	$y_2=(z,1-z)$, the problem \eqref{aseg} becomes
\be\label{aseg3}
	\min_{w,z} \max_j\{ (w,1-w) B_j (z,1-z)^R \}= \min_{w,z} \max\{ 2+z-w, 2+(1/2)wz+(1/2) w-z\},
\ee
	or 
\be\label{phis2}
	\phi_1(w,z)= 2+z-w, \qquad \phi_2(w,z)= 2+(1/2)wz+(1/2)w-z,
\ee
$0\leq w, \, z \leq 1$, from which we obtain the (SGD) scheme
\ba\label{sd3}
	\bp w_{m+1}\\ z_{m+1}\ep&= \bp w_{m}\\ z_{m}\ep -\alpha_m \theta_1 p(\phi_1^{p-1} \nabla \phi_1) (w_m,z_m)  
	-\alpha_m \theta_2 p(\phi_2^{p-1}\nabla \phi_2)(w_m,z_m) ,
\ea
where $\theta=(\theta_1,\theta_2)$ is equal to $(1,0)$ or $(0,1)$ with equal probability $1/2$.
	The exact problem \eqref{aseg3} is readily seen to have minimum on the curve $z=3w/(4-w)$
	where $ 2+z-w = 2+(1/2)wz+(1/2) w-z$. For, on this curve, minimizing $2+z-w= -1+ 12/(4-w)- w$, 
	we find a unique interior critical point at $w=4- \sqrt{12}\approx .536$, with positive
	second derivative, hence a minimum on the dividing curve.
	Meanwhile, on the boundaries of the domain, the minimum value $+2$ is seen to be achieved on the dividing
	curve at $(w,z)=(0,0)$. Thus, the exact minimizer occurs in the interior, on the dividing curve,
	at $(w,z)=(w, 3w/(4-w))\approx (.536,.47)$, with value approximately $1.94$.
Note that the objective function still admits zeros at $(w,z)=(0,2)$ and $(w,z)= (-1,1)$, curiously, but
	starting in the feasible region we do not seem to reach these in 
	numerical experiments.
\end{example}

\begin{example}\label{RPS3}
An interesting $3\times 3\times 3$ case is the ``odd-man-in'' three-player Rock-Paper-Scissors
considered in \cite{BBDJZ}, which has payoff function
\be\label{RPSrptOMI}
\Psi(x,y,z)=2 y\cdot z - x\cdot(y+z),
\ee
where $x,y,z\in \R^3$ are probability vectors.
Considered as an asynchronous coalition game \eqref{aseg} of players $y$, $z$ vs. player $x$,
this has global minimizers at $y=(1,0,0)$, $z=(0,1/2,1/2)$, $y=(0,1,0)$, $z=(1/2,0,1/2)$, 
and $y=(0,0,1)$, $z=(1/2,1/2,0)$,
and local minimizers at $y=(0,1/3,2/3)$, $z=(2/3,1/3,0)$; $y=(2/3,1/3,0)$, $z=(0,1/3,2/3)$; 
	$y=(1/3,0, 2/3)$, $z=(1/3,2/3,0)$; 
$y=(1/3,2/3, 0)$, $z=(1/3,0, 2/3)$; $y=(0,2/3, 1/3)$, $z=(2/3, 0, 1/3)$; and $y=(2/3,0,1/3)$, $z=(0, 2/3, 1/3)$,
with, in addition, a nonsmooth saddle at the Nash equilibrium $y=z=(1/3,1/3,1/3)$.
Adding $2$ to achieve positivity yields the standard form $\min_{y,z}\max \phi_j(y,z)$, where
\be\label{RPSphis}
\phi_j(y,z)= 2+ y\cdot z - (y_j+z_j), \quad j=1,\dots,3,
\ee
with $y=(y_1, y_2, 1-y_1-y_2)$, $z=(z_1, z_2, 1-z_1-z_2)$, 
and $(y_1,y_2)$ lying in the simplices $0\leq y_j, z_j\leq 1$, $y_1+y_2, \, z_1+z_2 \leq 1$,
that is, a minimax problem in $\R^4$.

An interesting reduced problem in $\R^3$ is obtained by restricting $y_1=0$, i.e., ignoring one strategy
option for player $y$, yielding a minimax problem on $0\leq y_2\leq 1$ crossed with the simplex
$0\leq  z_1, z_2\leq 1$, $z_1+z_2 \leq 1$. This inherits the global minimizers $(y_2,z_1,z_2)=(1,1/2,0)$ 
and $(y_2,z_1,z_2)=(0, 1/2,1/2)$, and local minimizers $(y_2, z_1,z_2)=(1/3, 2/3,1/3)$
and $(y_2,z_1,z_2)=(2/3, 2/3, 0)$, 
along with possible new interesting features coming from the restriction to a smaller domain. 
\end{example}

\subsection{Numerical experiments}\label{s:gexp}
We now describe numerical experiments for the examples in Section 6.1 and 6.2.  

\medskip
\textbf{Observations about the importance of stepsize.}
We observe in all of the following examples the importance of correctly selecting $\alpha$ for the efficacy of the method.  A higher $\alpha$, especially early in the optimization process, is desirable in order to increase the speed of convergence.  However later in the process a smaller $\alpha$ is desirable in order to increase the accuracy of the prediction.  This leads to a desire for an $\alpha$ that starts as large as possible and decays at an appropriate rate 
to $0$.

Here we have the additional problem that taking large values of $p$ quickly increases the function values and gradient causing overflow and rounding errors.  However, we see analytically that our accuracy is order $1/p$, so we are conflicted between our desire to take large $\alpha$ and take large $p$.  This makes choosing an $\alpha$ sufficiently (initially) small essential to even begin simulating the problems without errors.    

To solve this issue, we choose (inefficiently but simply) to use very small stepsizes and a large number of iterations.  Due to this choice, we must select $\alpha$ which decay very slowly in order to mitigate the slow convergence caused by taking this small $\alpha$. For instance, taking $\alpha=c/m$ is undesirable in this context since $\alpha$ will be extremely small by the $1000$th iteration, and much more so by the $100,000$th.  Our $\alpha$ of choice are $\alpha(m)=c$ (no decay), $\alpha(m)=c/\sqrt{m}$ (moderate decay) and $\alpha(m)=c/\log(m+1)$ (slow decay).  We then choose $c$ sufficiently small to simulate $p=10$ without immediate errors.

Also to mitigate this issue we rescale the matrices provided in the examples.  By dividing each matrix by its maximum entry we reduce the effect of $p$ in causing overflow errors without altering the solution to the problem.  However, this does not address the root of the issue.

Using an adaptive stepsize algorithm such as a line search could also greatly alleviate this issue, however we do not explore this option here.  In particular, towards the beginning of the process we want a small stepsize to mitigate the extremely large gradients.  Then we want $\alpha$ as large as possible in the middle of the process to quickly approach the region around the minimum where the gradient is no longer explosively large due to the influence of $p$
(but still small enough to avoid errors).  Finally we want $\alpha$ to decay to 0 as it approaches the minimum at the end of the process.

\medskip
\textbf{Example 6.2.}
Here we test the described SGD algorithm \ref{gSVD} for games on a convex one-dimensional problem.  We simulate the gradient descent 1000 times, using 1000 iterations each, to see the convergence of the method.  The results are shown in the form of a histogram in Figure \ref{fig:62}.  We observe nice convergence of the method for the value $p=10$.

\begin{figure}
    \centering
    \textbf{Numerical Results for Example 6.2}\par\medskip
    \includegraphics[scale=.5]{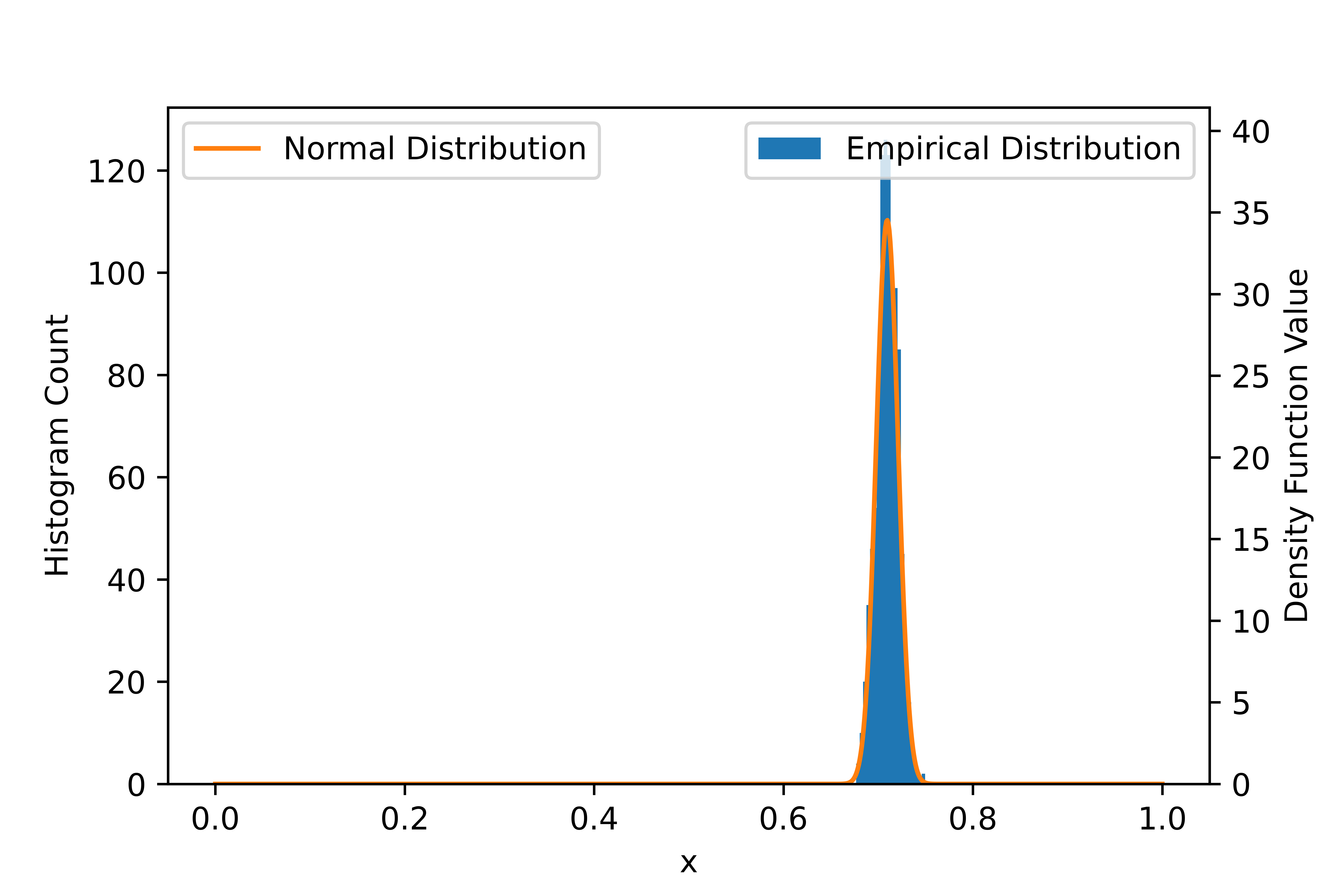}
    \caption{\small
		A histogram plotting 1,000 trials of 1,000 iterations of the described SGD algorithm, along with the density function of a normal distribution of the same expectation and variance.  Here we use $p=10$, $\alpha(m)=.1$.
    }
    \label{fig:62}
\end{figure}

\medskip
\textbf{Example 6.3.}
Here we experiment on a convex two-dimensional problem.  We use this problem to compare the efficacy of the SGD algorithm, in particular the algorithm defined by \eqref{gSVD2}, with full Gradient Descent (GD) and Coordinate Descent (CD).  We also compare to a fourth method which combines SGD with CD.  

Our SGD method relies on expressing the gradient as a summation, then choosing one term of the sum on each iteration to act as an approximation of the gradient.  The CD method uses the full gradient summation, but randomly selects only one coordinate of that gradient while setting the other coordinates to zero.  Thus the combined method chooses one term of the sum to represent the gradient, then sets all directions of the gradient but one to zero.

The results of 1000 trials of each method are shown in the two dimensional histograms in Figures \ref{fig:63_numerics_I} and \ref{fig:63_numerics_II}.  For these experiments we use $p=10$ and stepsize $\alpha(m) = 1/\sqrt{m}$.

We see that when compared at a fixed number of iterations the clear winner is the full gradient descent, followed by CD, then SGD, then the combined method.  This is not surprising since in the context of this problem the SGD method ignores two thirds of the gradient while the CD method ignores half.  The combined method then ignores five sixths of the potential information contained in the gradient.  Ignoring this information serves to greatly increase computational efficiency, however in our tests we use a fixed number of iterations.  This tests the rate of convergence while effectively ignoring the gain in efficiency.  We expect that these efficiency increases are instead more relevant for a high dimensional setting where computation costs are a limiting factor.

\begin{figure}
    \centering
    \textbf{Numerical Results for Example 6.3 Part I}\par\medskip
    {\small Full Gradient Descent} \\
    \begin{subfigure}[b]{0.4\textwidth}
        \centering
        \includegraphics[width=\textwidth]{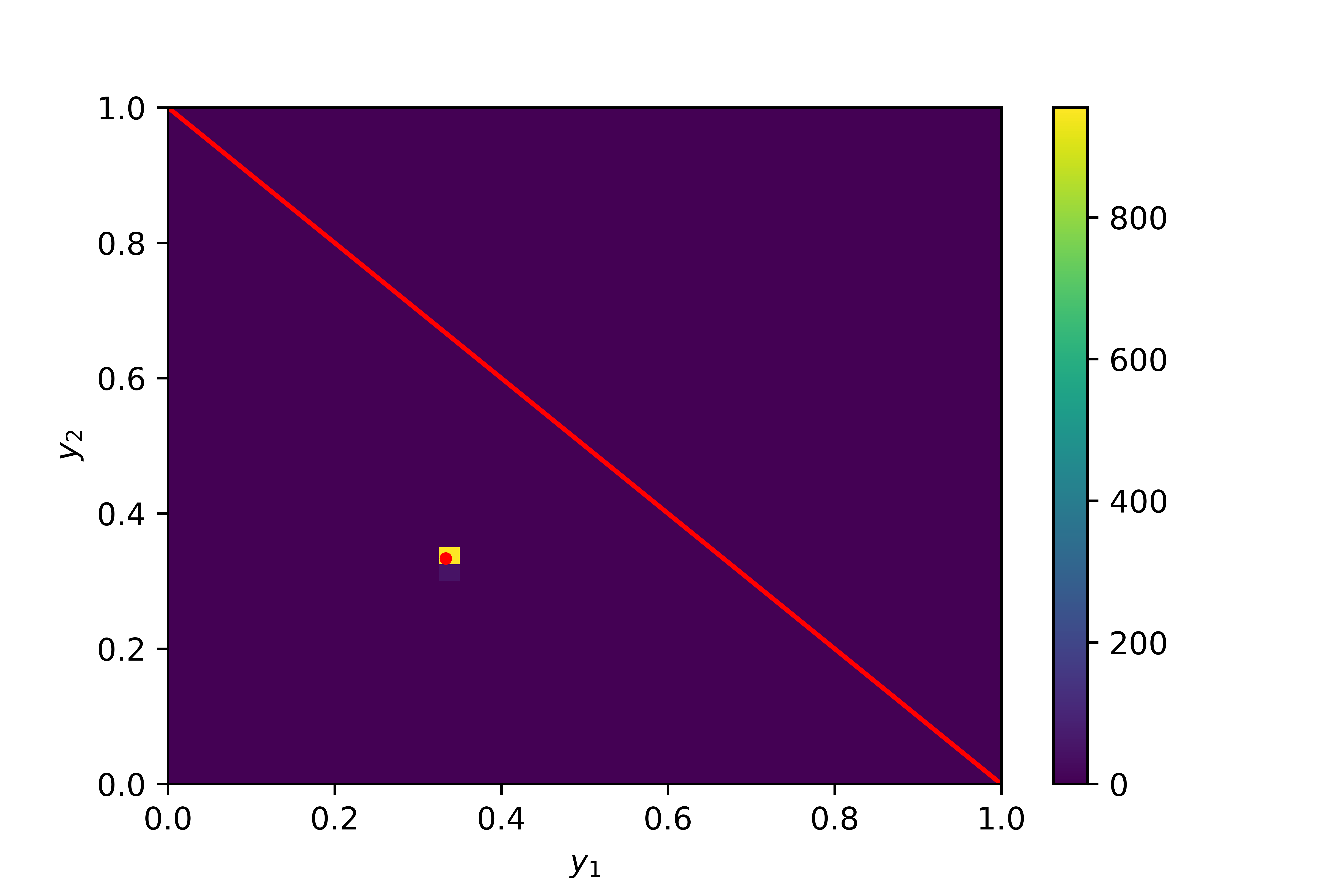}
        \caption[]%
        {{\small 1,000 iterations}}    
        \label{fig:63_GD_1}
    \end{subfigure}
    \hfill
    \begin{subfigure}[b]{0.4\textwidth}  
        \centering 
        \includegraphics[width=\textwidth]{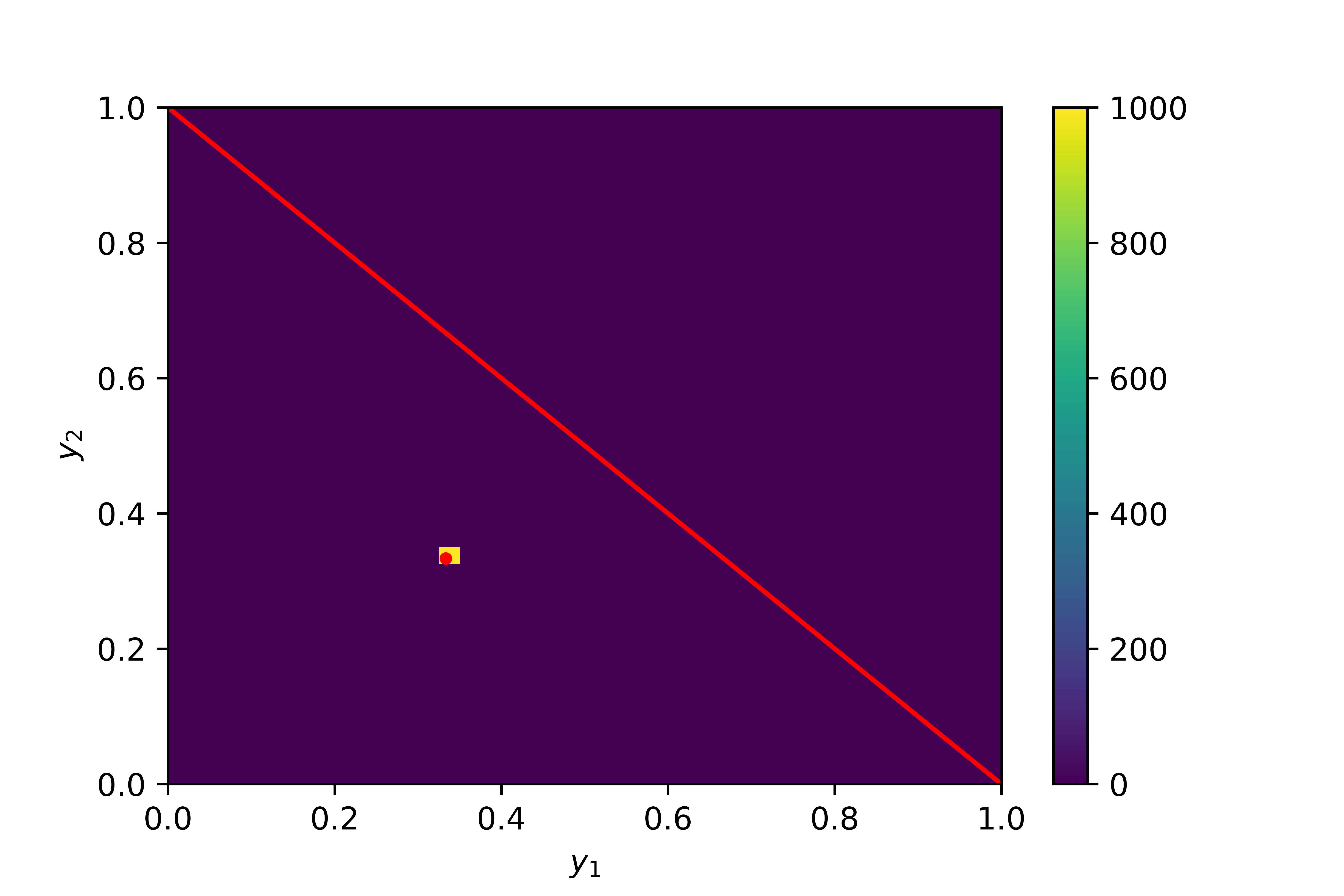}
        \caption[]%
        {{\small 5,000 iterations}}    
        \label{fig:63_GD_2}
    \end{subfigure} \\
    {\small Stochastic Gradient Descent} \\
    \begin{subfigure}[b]{0.4\textwidth}
        \centering
        \includegraphics[width=\textwidth]{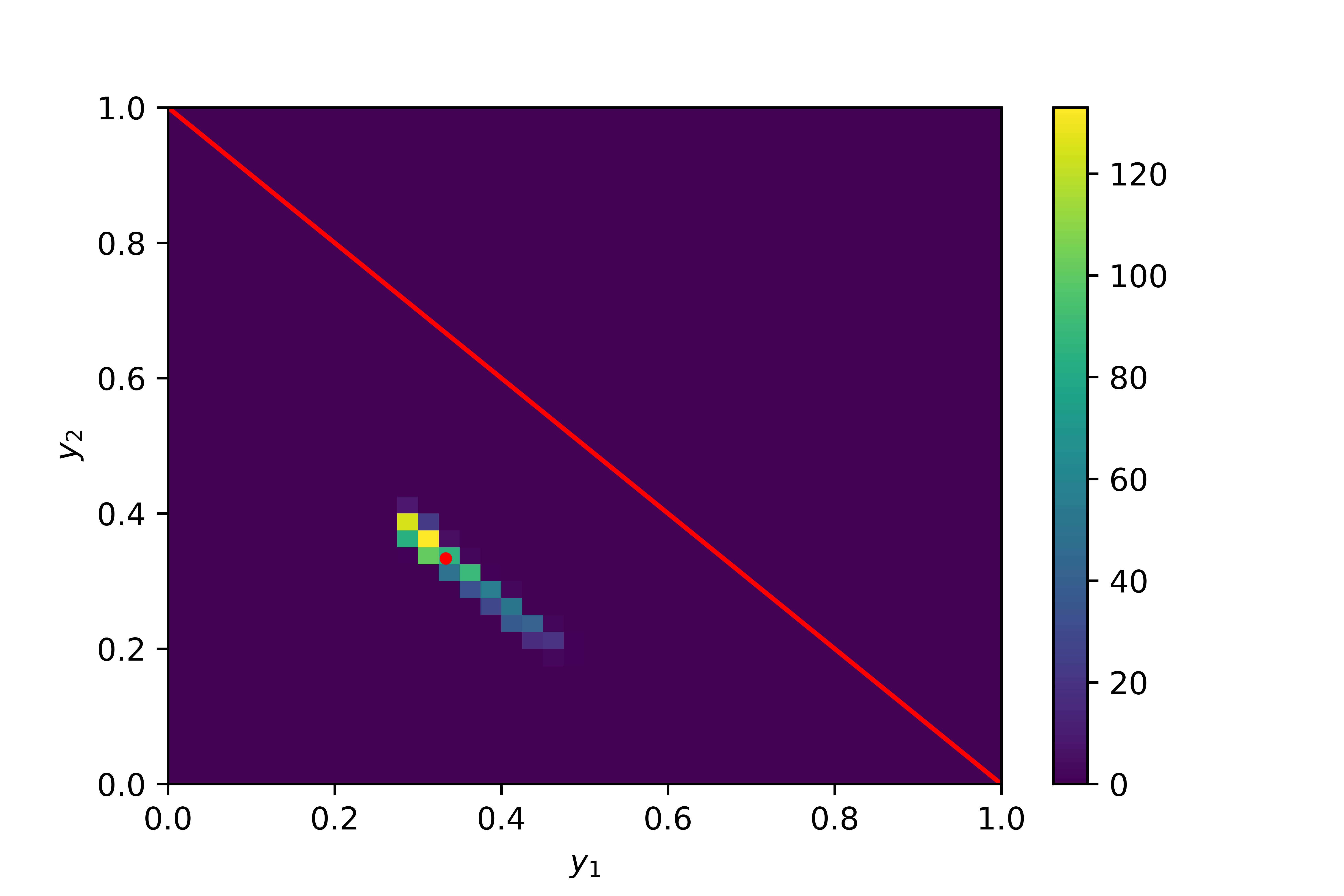}
        \caption[]%
        {{\small 1,000 iterations}}    
        \label{fig:63_SGD_1}
    \end{subfigure}
    \hfill
    \begin{subfigure}[b]{0.4\textwidth}  
        \centering 
        \includegraphics[width=\textwidth]{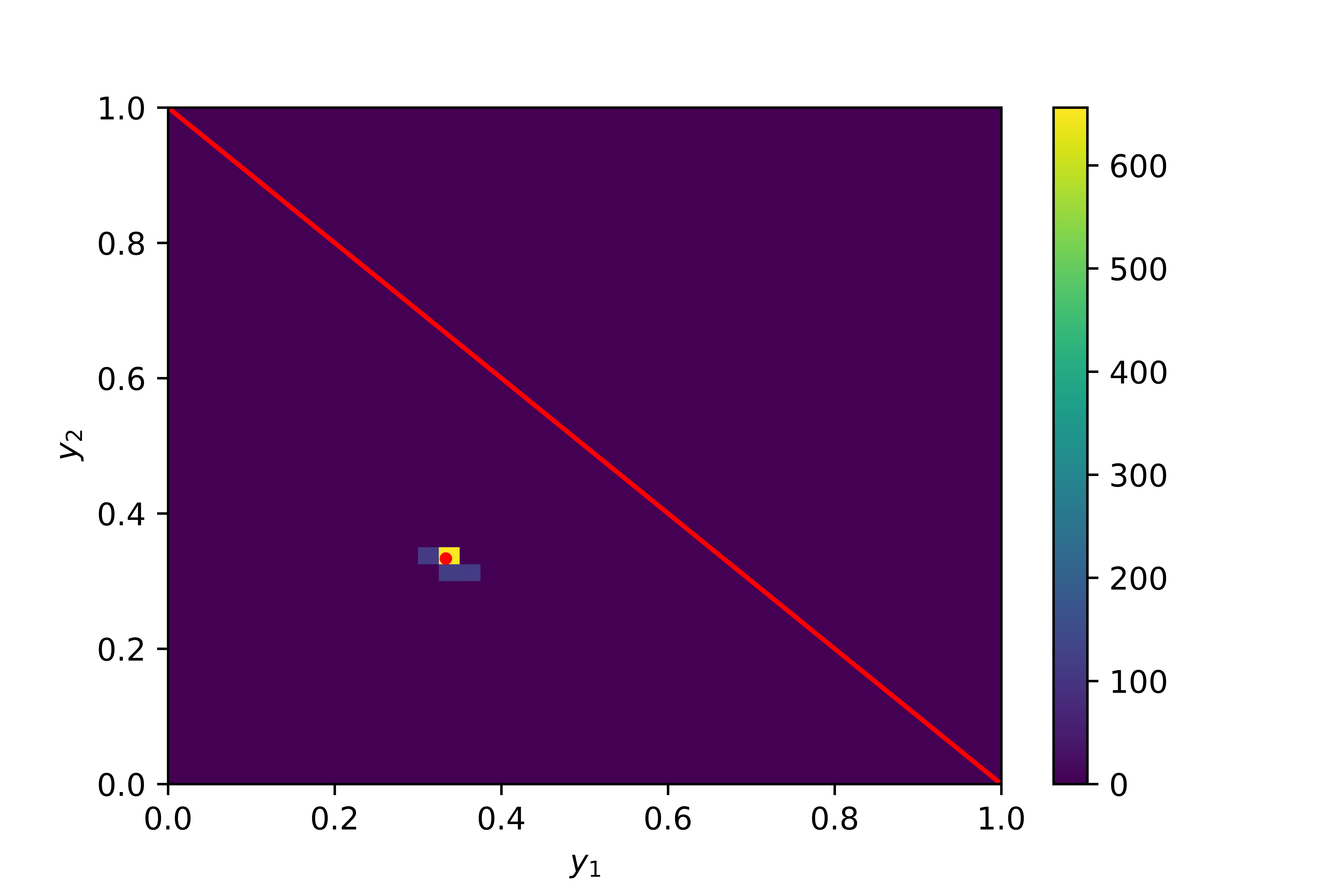}
        \caption[]%
        {{\small 5,000 iterations}}    
        \label{fig:63_SGD_2}
    \end{subfigure}
    \caption[]
    {\small Two dimensional histograms containing the results of 1000 trials of full gradient descent and the described stochastic gradient descent scheme \eqref{gSVD2}.  Here we use $p=10$ and stepsize $\alpha(m) = 1/\sqrt{m}$.  The plotted red point is the analytically found minimum for the exact max function, while the red line is the boundary of the simplex.  Observe the nice convergence to the minimum even at fairly low numbers of iterations.  Compare also with the experiments in figure \ref{fig:63_numerics_II}.}
    \label{fig:63_numerics_I}
\end{figure}

\begin{figure}
    \centering
    \textbf{Numerical Results for Example 6.3 Part II}\par\medskip
    {\small Coordinate Descent} \\
    \begin{subfigure}[b]{0.4\textwidth}
        \centering
        \includegraphics[width=\textwidth]{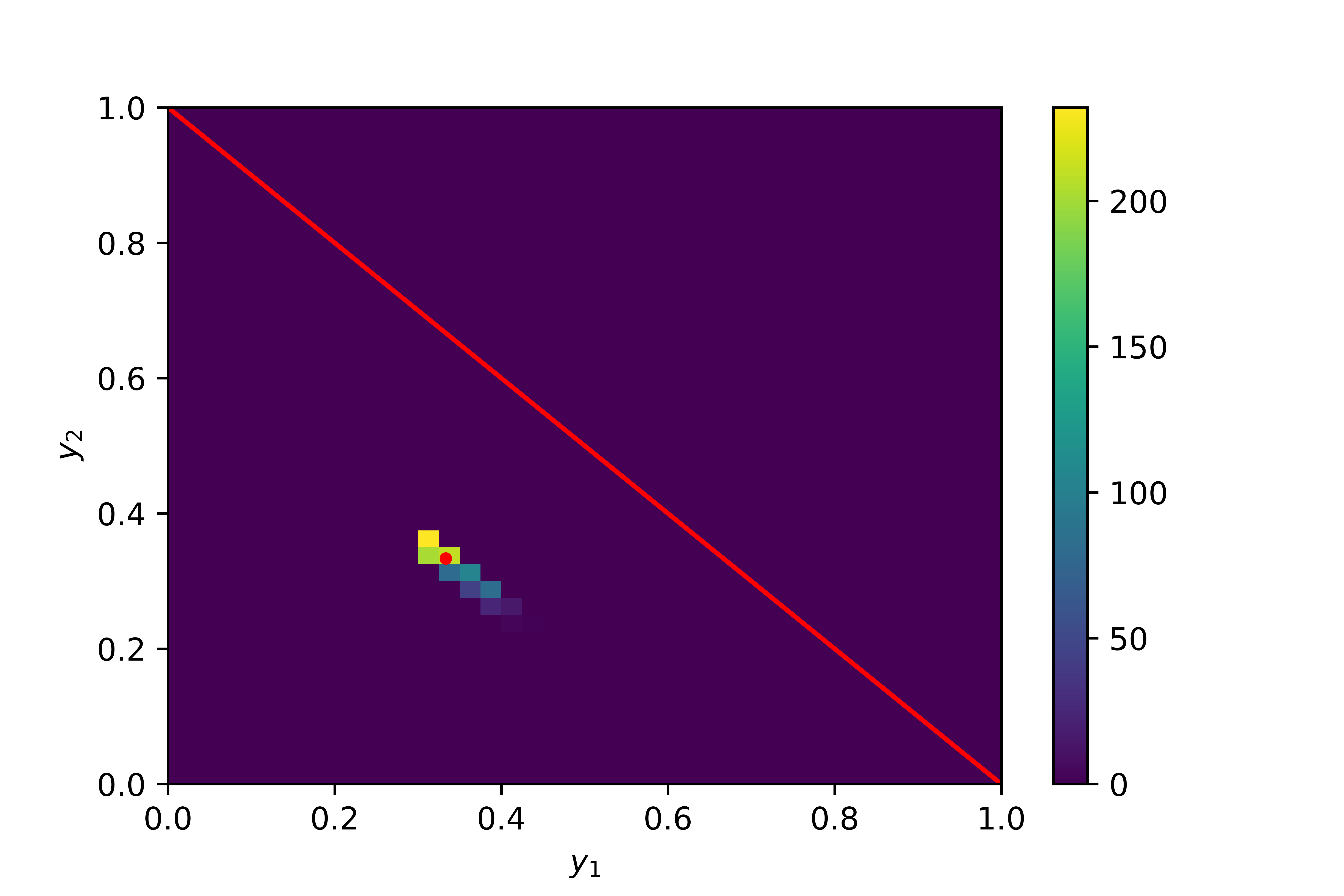}
        \caption[]%
        {{\small 1,000 iterations}}    
        \label{fig:63_CD_1}
    \end{subfigure}
    \hfill
    \begin{subfigure}[b]{0.4\textwidth}  
        \centering 
        \includegraphics[width=\textwidth]{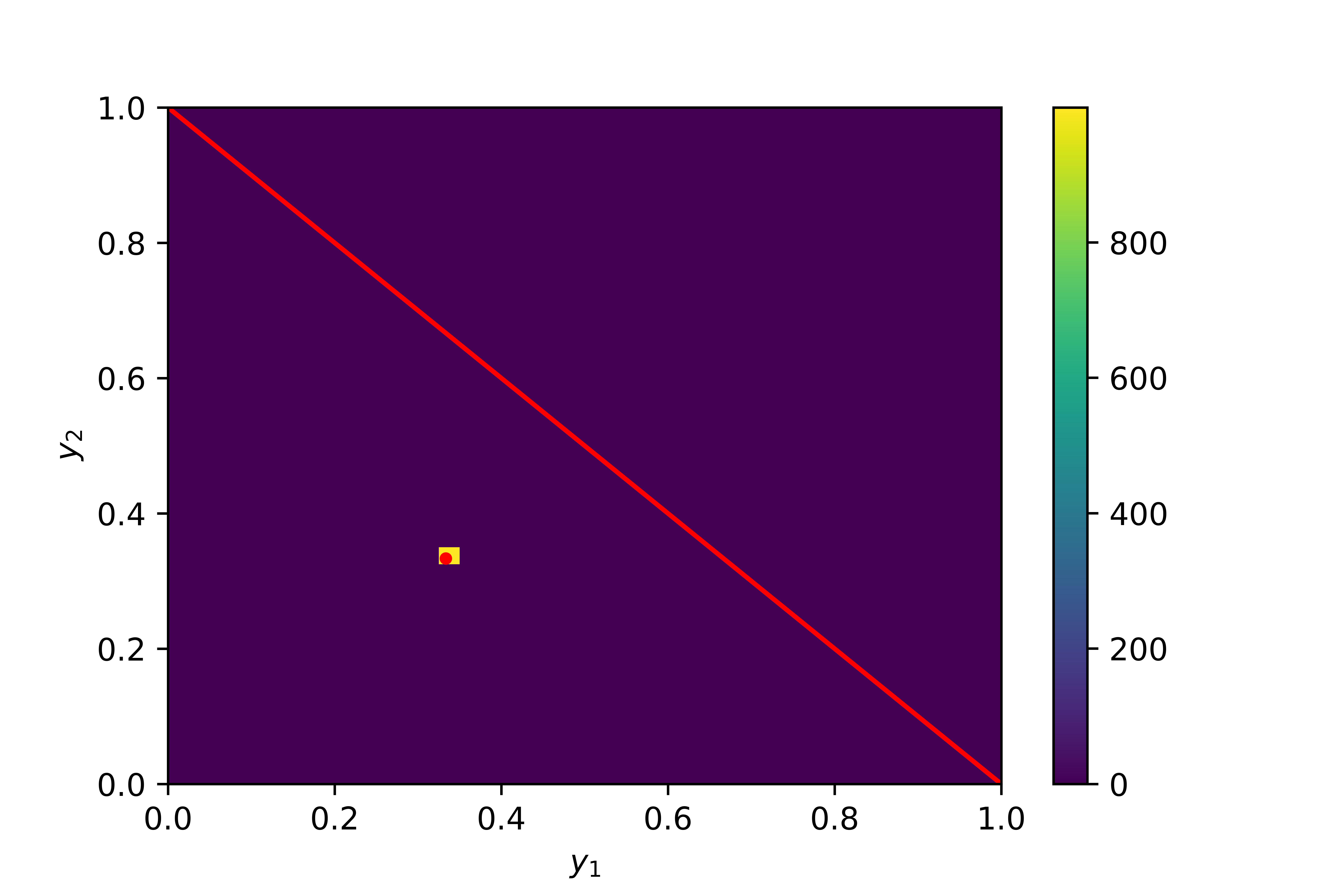}
        \caption[]%
        {{\small 5,000 iterations}}    
        \label{fig:63_CD_2}
    \end{subfigure} \\
    {\small Combined Method} \\
    \begin{subfigure}[b]{0.4\textwidth}
        \centering
        \includegraphics[width=\textwidth]{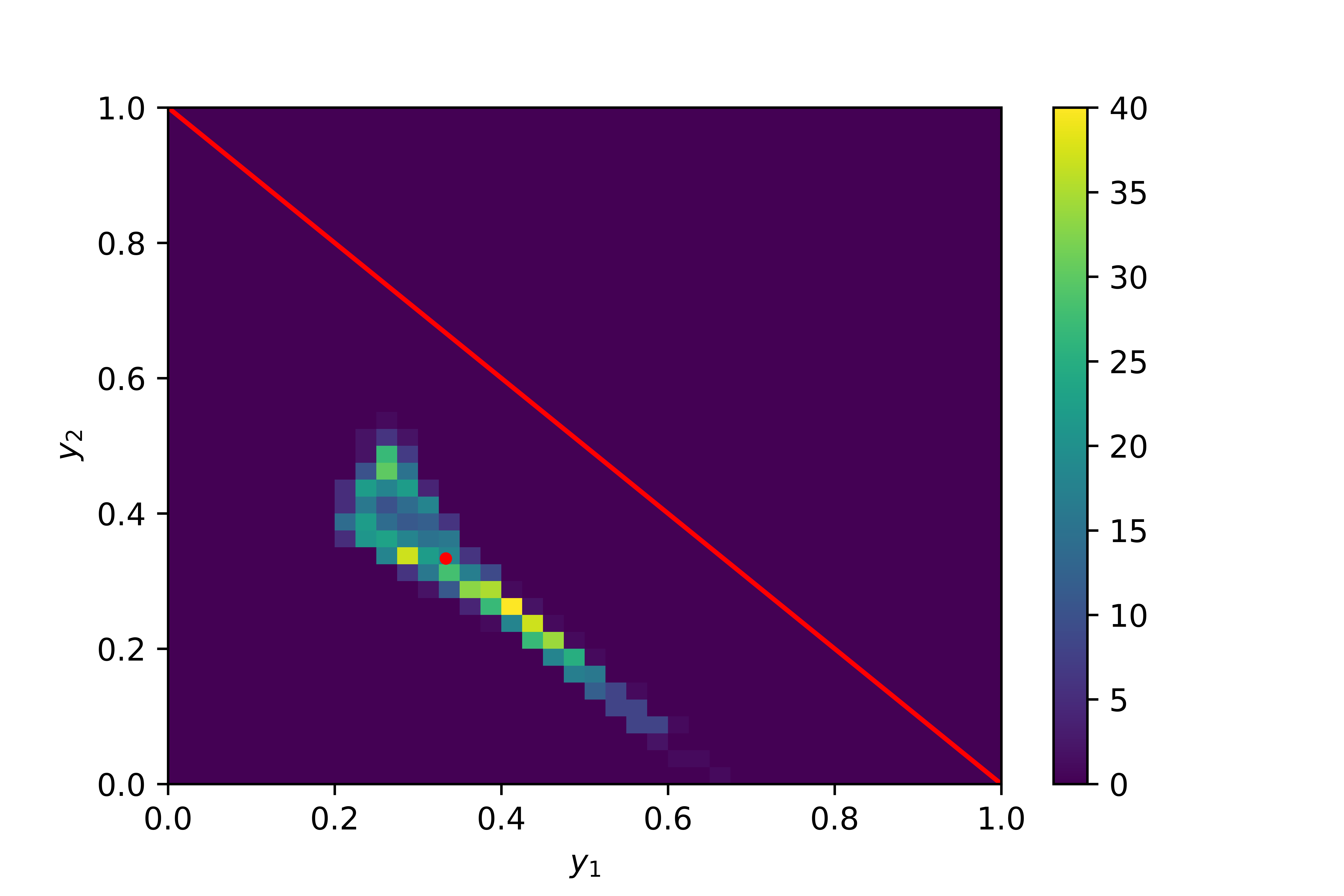}
        \caption[]%
        {{\small 1,000 iterations}}    
        \label{fig:63_both_1}
    \end{subfigure}
    \hfill
    \begin{subfigure}[b]{0.4\textwidth}  
        \centering 
        \includegraphics[width=\textwidth]{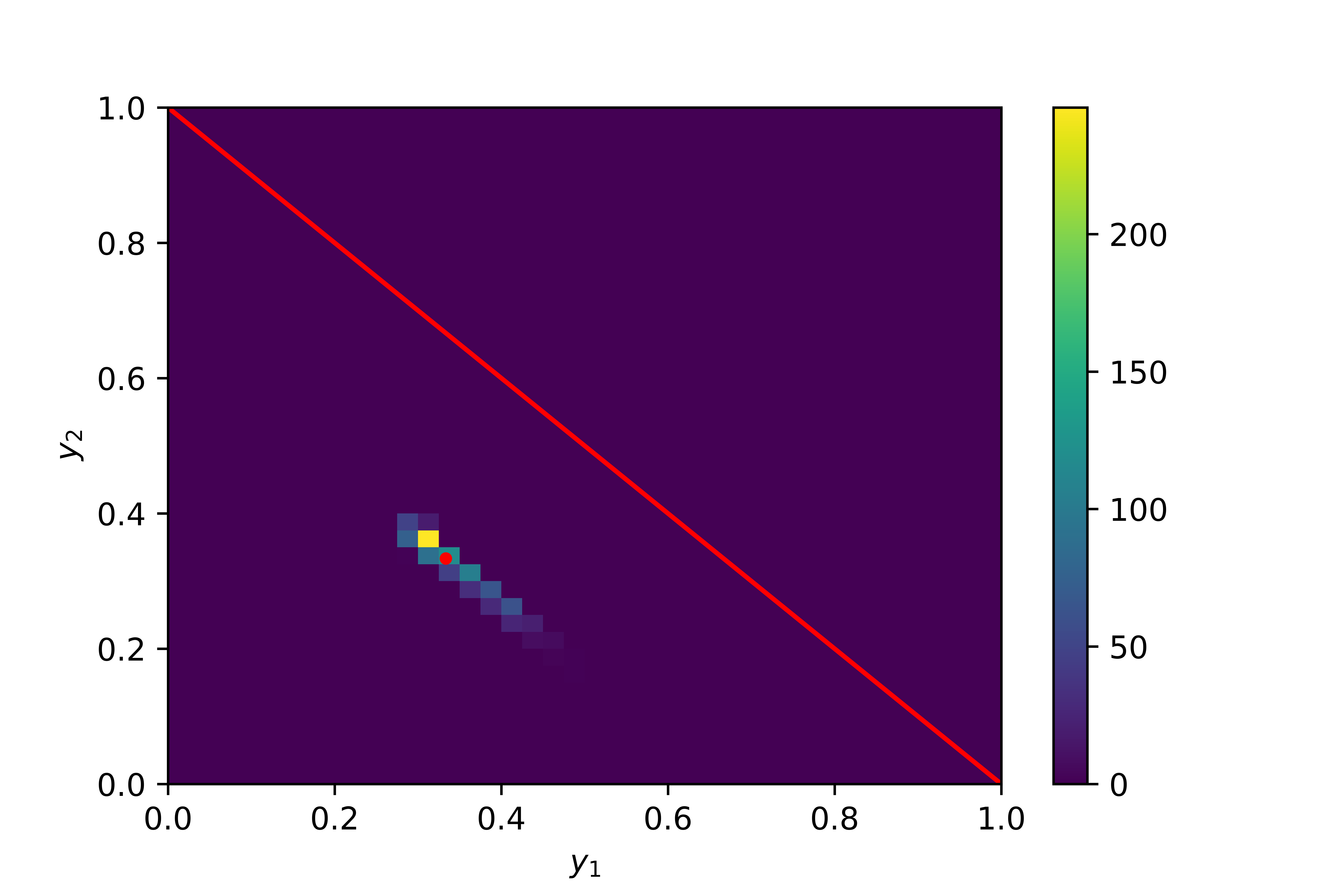}
        \caption[]%
        {{\small 5,000 iterations}}    
        \label{fig:63_both_2}
    \end{subfigure}
    \caption[]
    {\small Two dimensional histograms containing the results of 1000 trials of coordinate descent and a combined coordinate and stochastic gradient descent scheme.  Compare also with the results for Gradient Descent and Stochastic Gradient Descent in figure \ref{fig:63_numerics_I}.  We again use $p=10$ and stepsize $\alpha(m) = 1/\sqrt{m}$.  The plotted red point is the analytically found minimum for the exact max function, while the red line is the boundary of the simplex.  Observe the nice convergence to the minimum as the number of iterations is increased.}
    \label{fig:63_numerics_II}
\end{figure}

\medskip
\textbf{Example 6.4}
Now we test the SGD method on a nonconvex problem in two dimensions.  Additionally, this problem has an interior saddle point and minima outside the domain of the probability space.  We plot the contours for the approximated maximum in figure \ref{fig:64_contour}. 
 Notably the plotted contours are for $p=2$.  As $p$ increases, the gradients become exponentially larger as does the function value. It can be additionally seen in figure \ref{fig:64_contour} that surrounding the local minima are relatively large regions of small gradient.  This potentially can greatly slow the convergence of the method as it approaches the local minima.

\begin{figure}
    \centering
    \textbf{Contour Plot for Approximate Maximum in Example 6.4}
    \includegraphics[scale=.5]{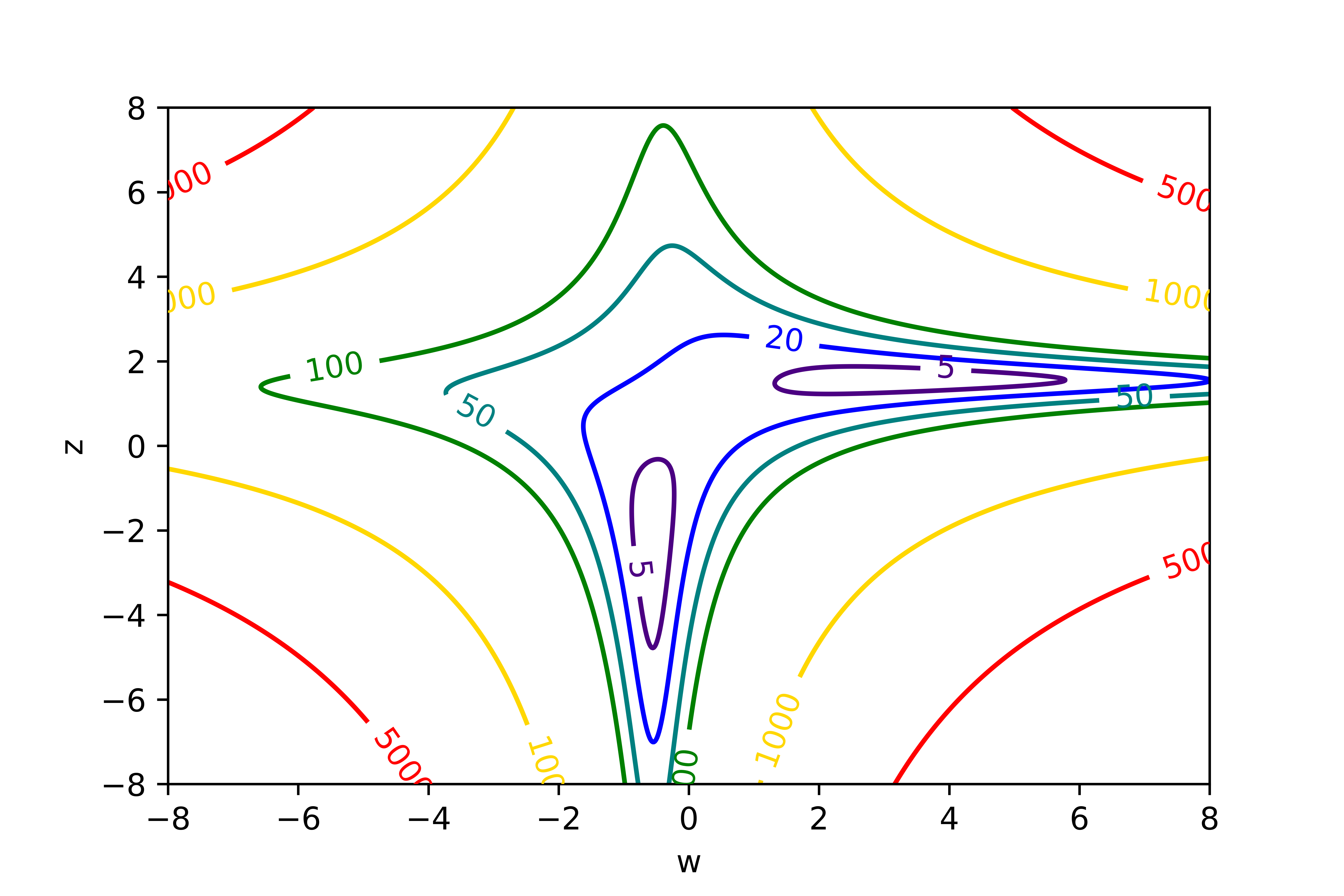}
    \caption{\small
		The contour plot for the approximate maximum function $\phi_1^p + \phi_2^p$ for $\phi_1$, $\phi_2$ given in \eqref{phis}. Here we plot for $p=2$, however the qualitative graph remains the same for larger $p$, but with much higher gradients and function values.
    }
    \label{fig:64_contour}
\end{figure}

Before making any adjustments to the method, we apply the algorithm exactly as described in \eqref{sd2} for $p=10$.  This leads to the results in Figure \ref{fig:64_numerics_large}.  We see the results quickly collapse to a slow manifold (and in particular the saddle point) as can be seen in the contour plot, before slowly moving toward the global minima.  We notice that the convergence slows essentially to a halt before reaching the global minima.  This is due primarily to the large choice of $p$.  This choice of $p$ necessitates very small $\alpha$ because the maximum approximation as well as the corresponding gradient grow exponentially with $p$.  However, there is a large region of relatively low gradient (as can be seen in the figure \ref{fig:64_contour}).  This is combination with the necessarily small choice of $\alpha$ leads to a dramatic slowdown akin to an early termination.  We plot additionally the same simulation for $p=2$ in figure \ref{fig:64_numerics_small} and see that this error does not occur, and convergence to the global minima is achieved.  This highlights the fact that although increasing $p$ increases the fidelity of the approximation to the maximum, it can introduce other errors if the function is irregular in the sense that its gradients vary largely over the domain.

\begin{figure}
    \centering
    \textbf{Numerical Results for Example 6.4 with large $p$}\par\medskip
    \begin{subfigure}[b]{0.475\textwidth}
        \centering
        \includegraphics[width=\textwidth]{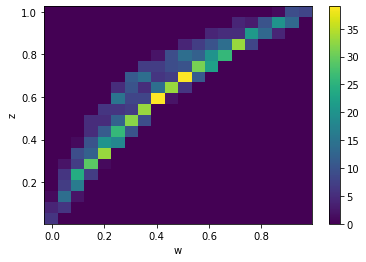}
        \caption[]%
        {{\small 1,000 iterations}}    
        \label{fig:64_large_1}
    \end{subfigure}
    \hfill
    \begin{subfigure}[b]{0.475\textwidth}  
        \centering 
        \includegraphics[width=\textwidth]{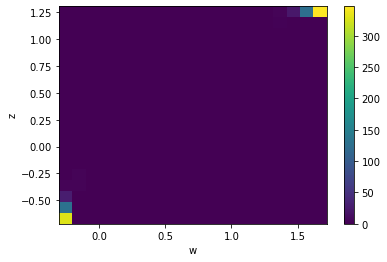}
        \caption[]%
        {{\small 100,000 iterations}}    
        \label{fig:64_large_2}
    \end{subfigure}
    \caption[]
    {\small Two dimensional histograms containing the results of 1000 trials of the described stochastic gradient descent scheme \eqref{sd2}.  Here we use $p=10$ and stepsize $\alpha(m) = 10^{-8}$.  Observe the convergence to two local minima outside the domain.}
    \label{fig:64_numerics_large}
\end{figure}

\begin{figure}
    \centering
    \textbf{Numerical Results for Example 6.4 with small $p$}\par\medskip
    \begin{subfigure}[b]{0.475\textwidth}
        \centering
        \includegraphics[width=\textwidth]{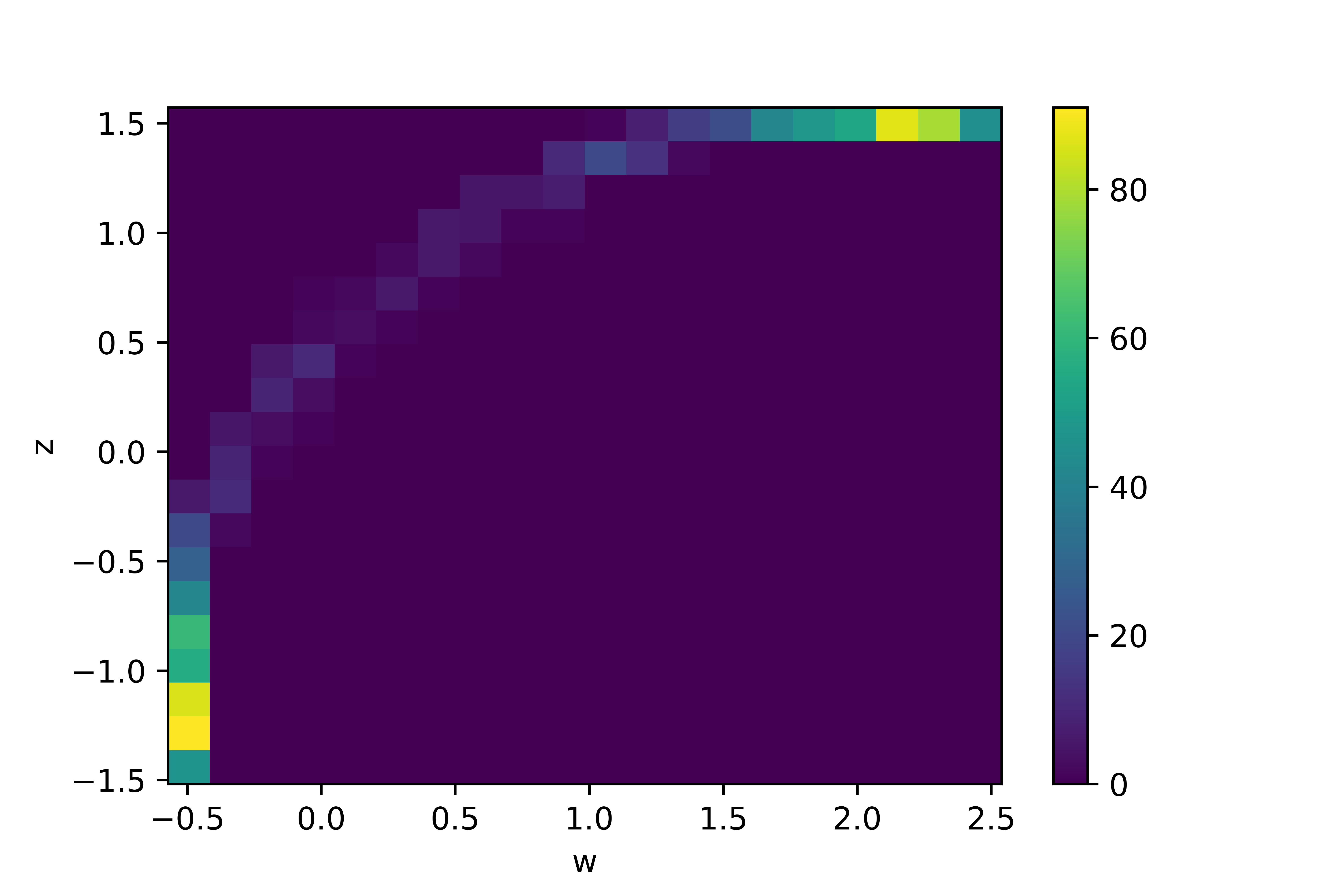}
        \caption[]%
        {{\small 1,000 iterations}}    
        \label{fig:64_3}
    \end{subfigure}
    \hfill
    \begin{subfigure}[b]{0.475\textwidth}  
        \centering 
        \includegraphics[width=\textwidth]{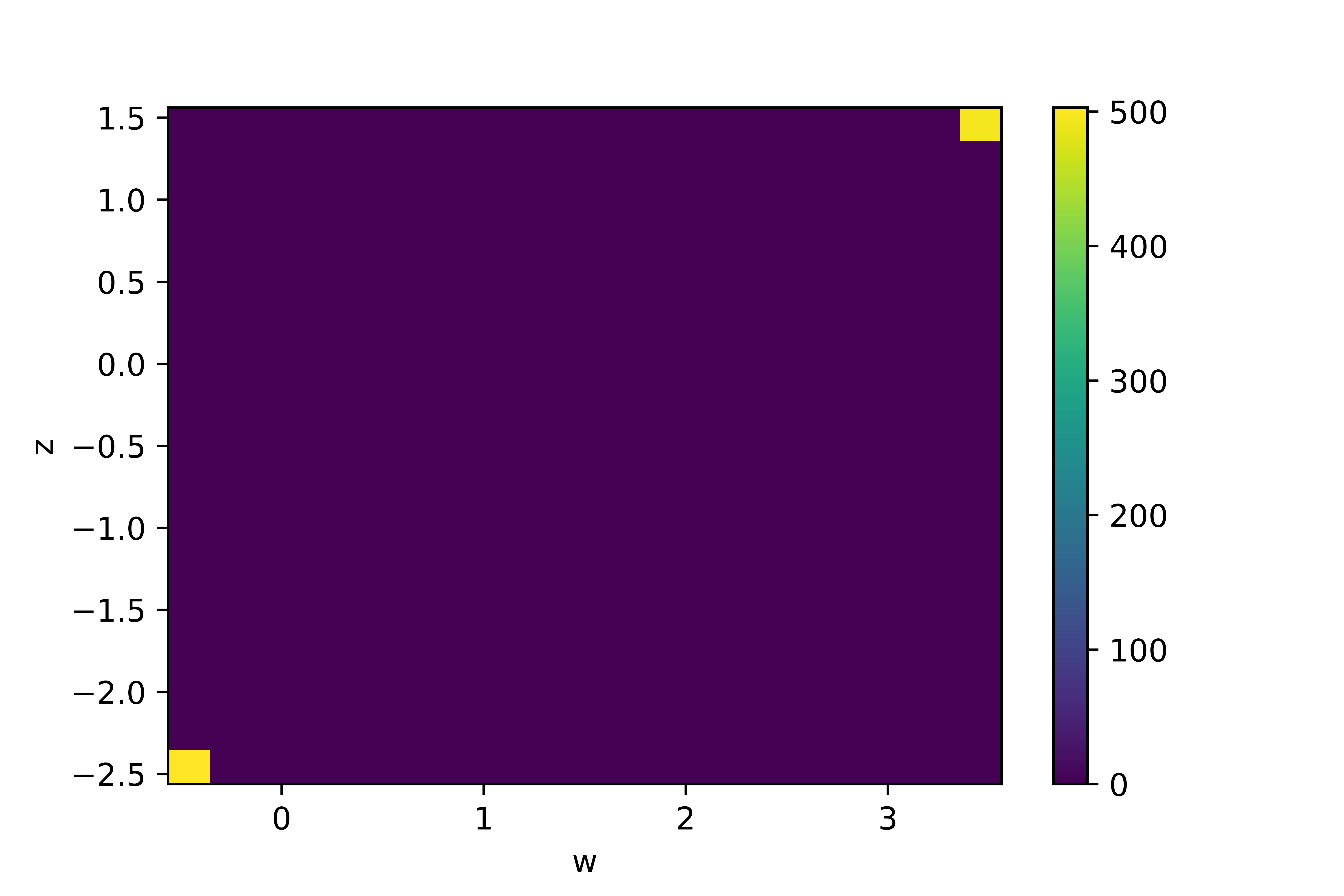}
        \caption[]%
        {{\small 100,000 iterations}}    
        \label{fig:64_4}
    \end{subfigure}
    \caption[]
    {\small Two dimensional histograms containing the results of 1000 trials of the described stochastic gradient descent scheme \eqref{sd2}.  Here we use $p=2$ and stepsize $\alpha(m) = .001$.  Observe the convergence to two local minima outside the domain.}
    \label{fig:64_numerics_small}
\end{figure}

Now we attempt to tackle the issue of the process leaving the desired domain.  Since our problem deals with probabilities we want to avoid the region outside the square $[0,1]\times[0,1]$. To do this we add a simple penalty as described by \eqref{penalty}.  The gradient of this penalty is added to each step of the iteration.  Since it is not multiplied by $\alpha$, it functionally has a very large weight built in, and is effective even with $k=d=1$ (which amounts to relatively weak penalty).  The results of this simulation are shown in Figure \ref{fig:64_penalty}.  We see that with this adjustment the results stay inside the desired domain and find the minima at the corners of the domain.

\begin{figure}
    \centering
    \textbf{Example 6.4 with Penalty}\par\medskip
    \begin{subfigure}[b]{.475\textwidth}
        \centering
        \includegraphics[width=\textwidth]{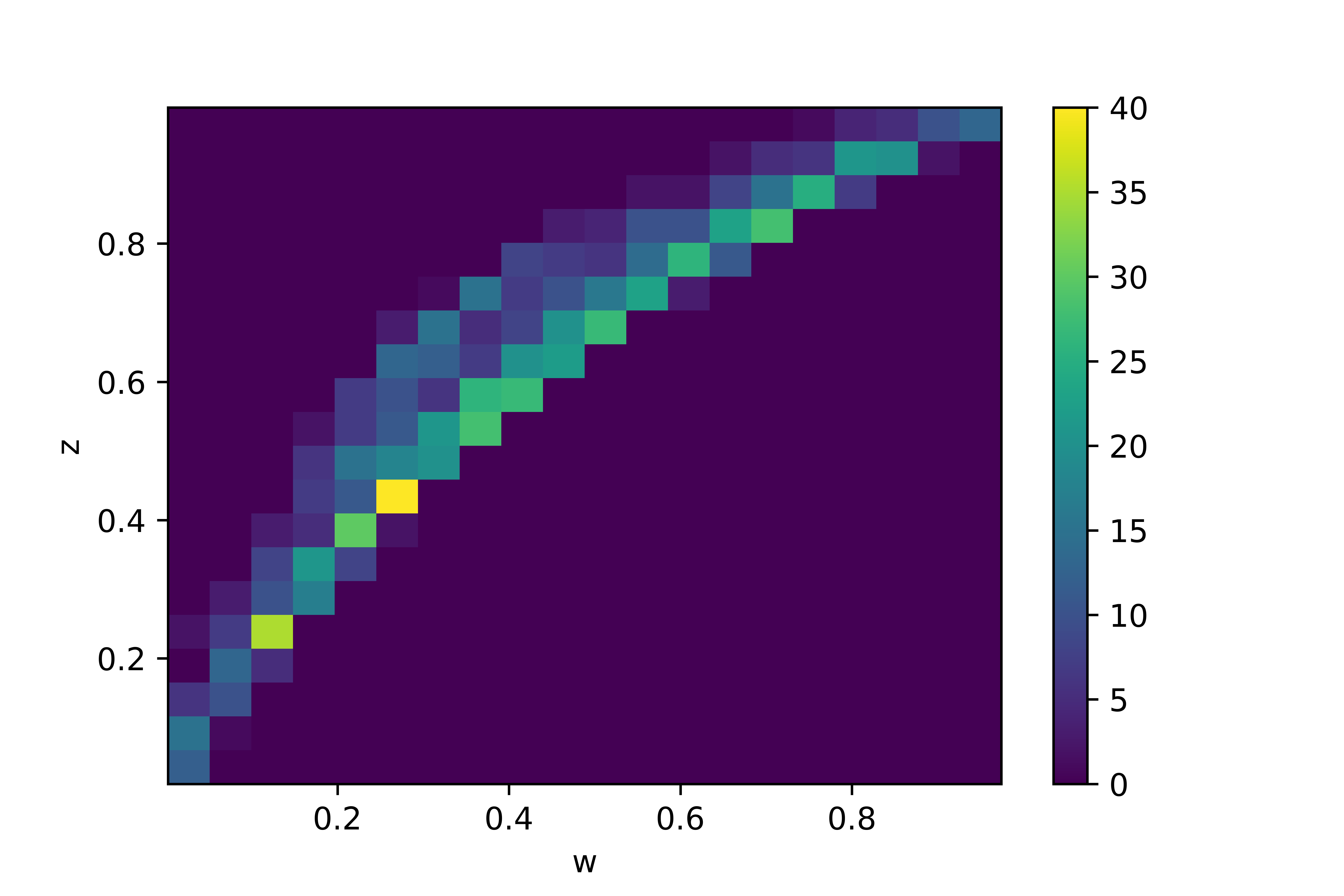}
        \caption[] {{\small 1,000 iterations}}
        \label{fig:64_penalty1}
    \end{subfigure}
    \hfill
    \begin{subfigure}[b]{.475\textwidth}
        \centering
        \includegraphics[width=\textwidth]{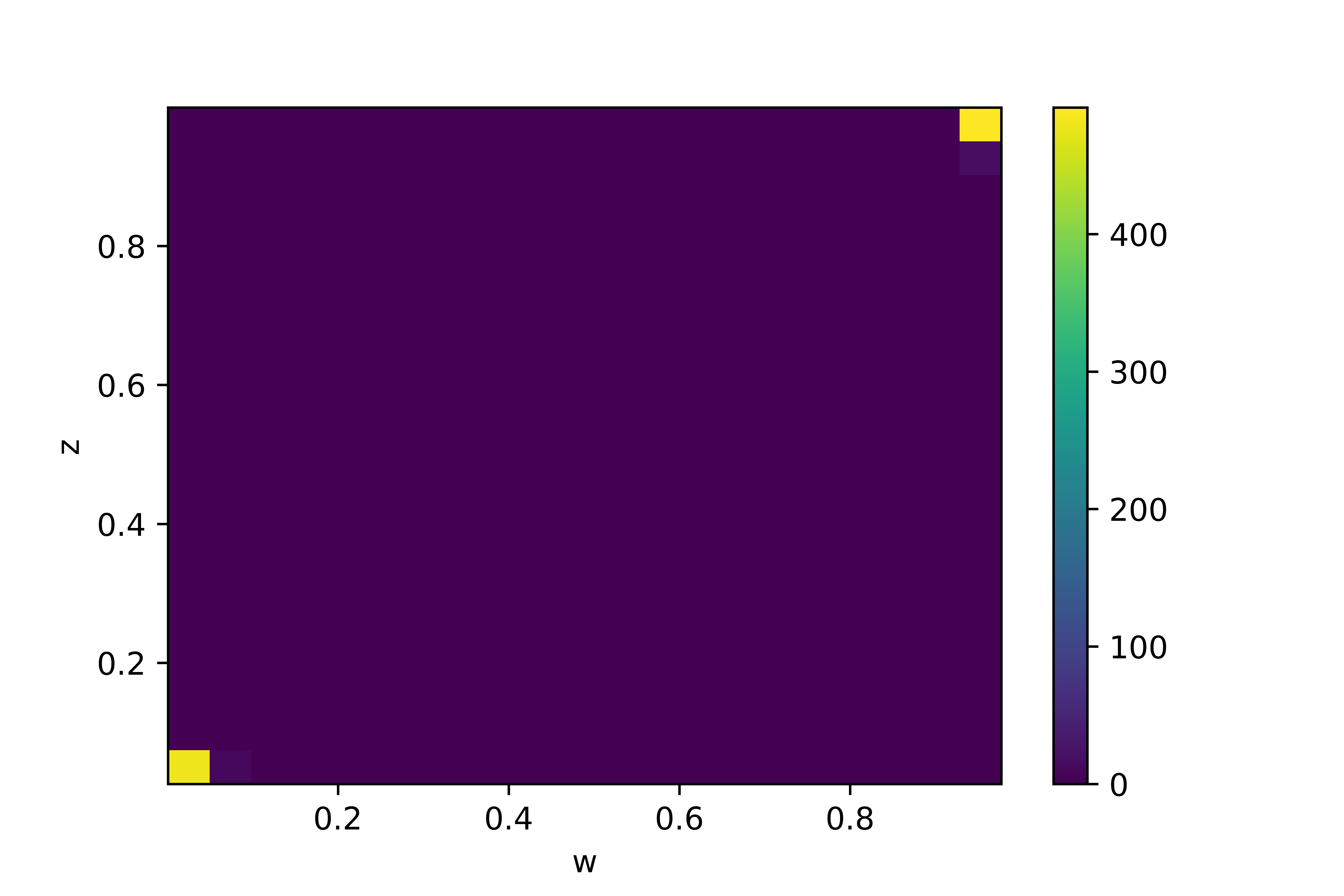}
        \caption[] {{\small 100,000 iterations}}
        \label{fig:64_penalty2}
    \end{subfigure}
    \caption{\small
		Two dimensional histograms containing the results of 1,000 trials of the described stochastic gradient descent scheme \eqref{sd2} with $p=10$, $\alpha = 10^{-8}$ and adjusted with penalty function \eqref{penalty} with $K=d=1$.  We see that with the penalty the results stay in the probability space.
    }
    \label{fig:64_penalty}
\end{figure}

\medskip
\textbf{Example 6.6.}
For the next experiment, we use our described SGD method for a two dimensional convex problem with a unique interior local minimum.  The contour plot of this function can be seen in Figure \ref{fig:66_contour}, where we can see easily the nice properties of this function.  We see that the the results of 1000 trials of SGD can be found in Figure \ref{fig:66_numerics}.  We see the simulation quickly collapse to a normal distribution centered around a point.  
Afterward, we observe that the slow direction (low gradient direction) in the contour plot collapses to the minimum quite quickly, while the fast direction (larger gradient direction) converges more slowly. 
So, interestingly, the band of non-converging points that we observe
is along the line where the gradient is large instead of small.
The shape in Figure \ref{fig:66_2} is present after 1000 iterations, and remains present beyond 100,000.  
We believe that this is due to our choice of constant stepsize in this problem.  We see an oscillation which occurs due to the extremely large gradient causing overshoot.  We leave this simulation as an acknowledgment of the difficulty in choosing $\alpha$ discussed before.

\begin{figure}
    \centering
    \textbf{Contour Plot for Approximate Maximum in Example 6.6}
    \includegraphics[scale=.5]{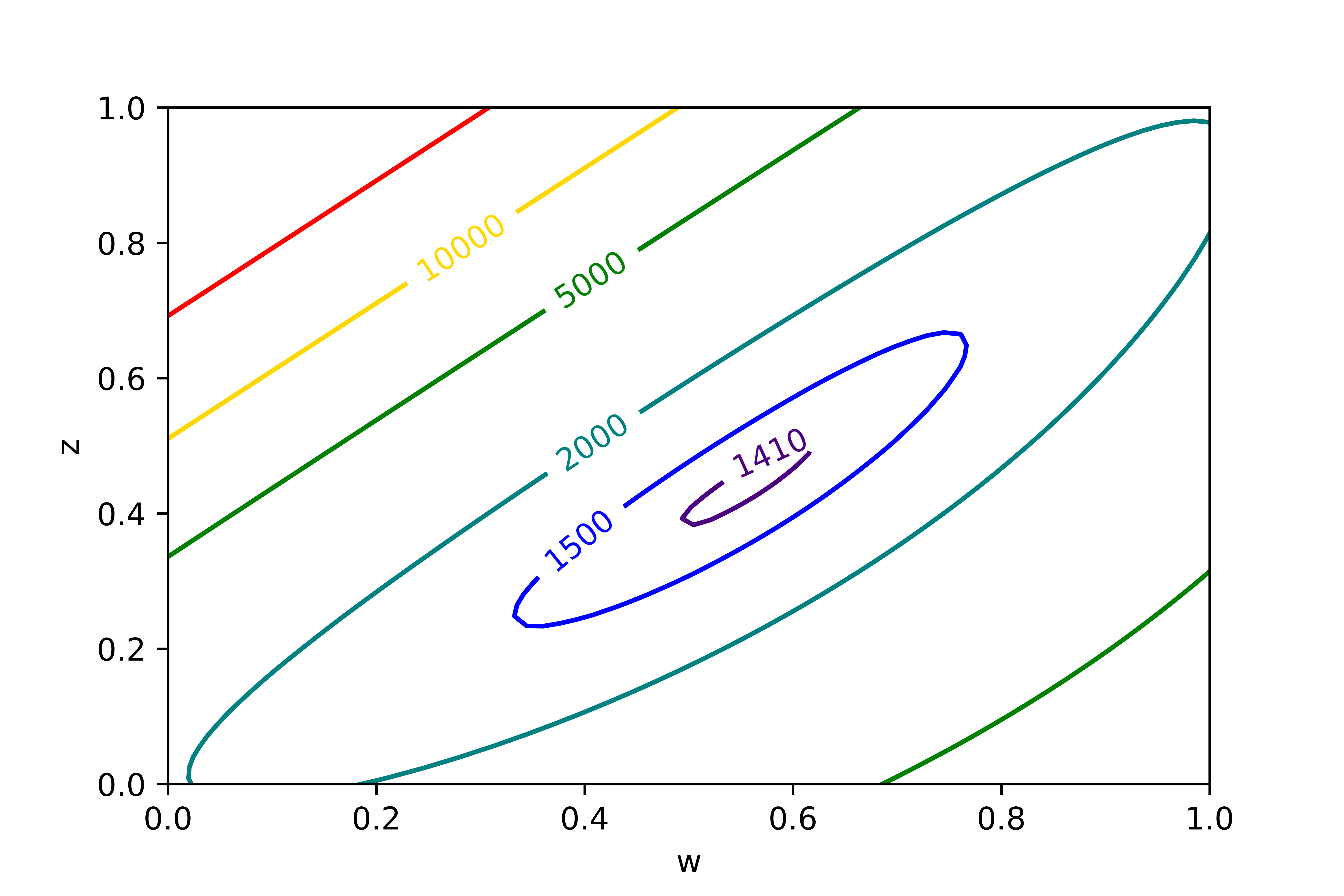}
    \caption{\small
		The contour plot for the approximate max $\phi_1^p + \phi_2^p$ for $\phi_1$, $\phi_2$ given in \eqref{phis2}. Here we plot for $p=10$, which is what we used in the other numerical experiments.
    }
    \label{fig:66_contour}
\end{figure}

\begin{figure}
    \centering
    \textbf{Numerical Results for Example 6.6}\par\medskip
    \begin{subfigure}[b]{0.475\textwidth}
        \centering
        \includegraphics[width=\textwidth]{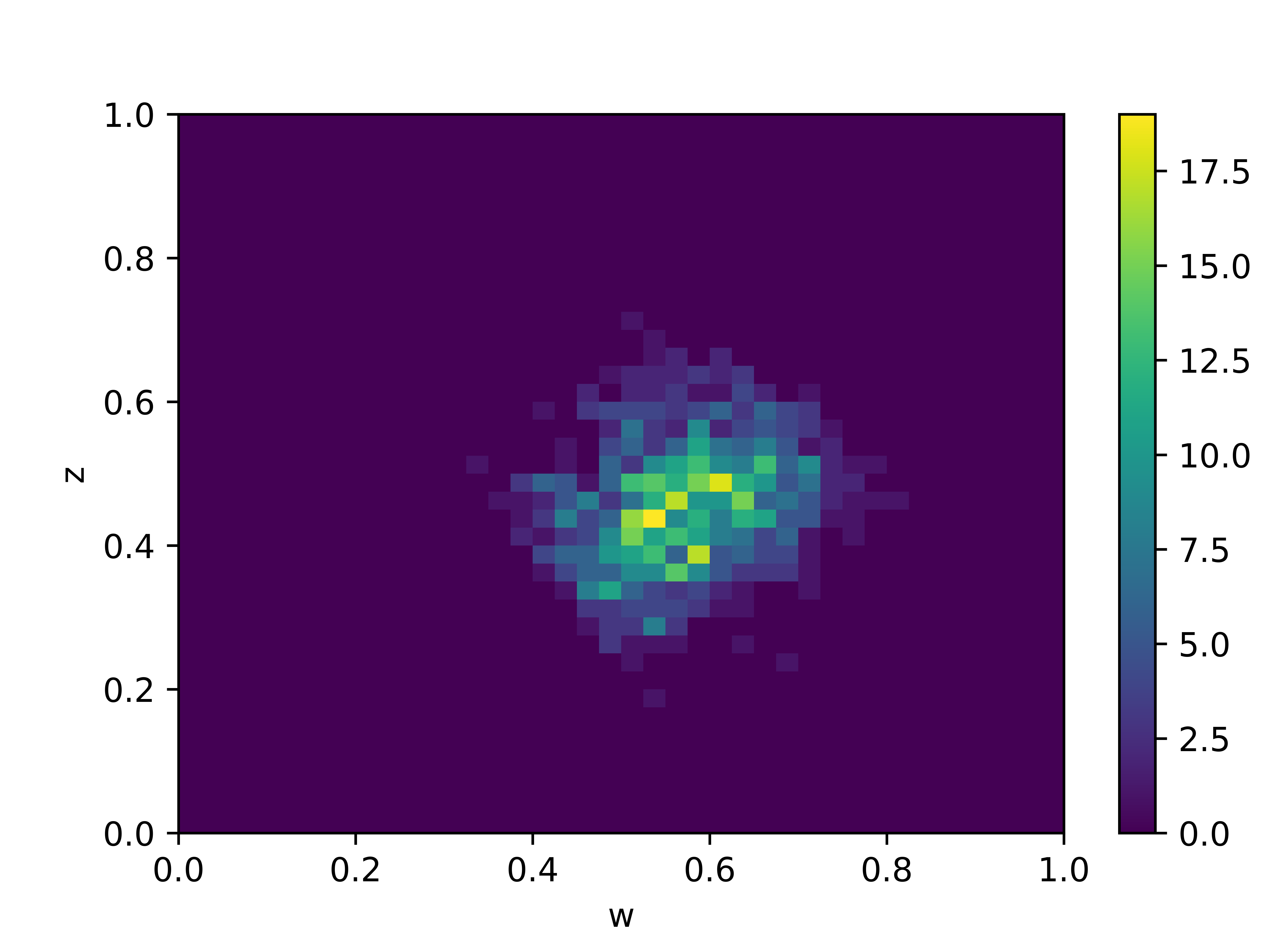}
        \caption[]%
        {{\small 100 iterations}}    
        \label{fig:66_1}
    \end{subfigure}
    \hfill
    \begin{subfigure}[b]{0.475\textwidth}  
        \centering 
        \includegraphics[width=\textwidth]{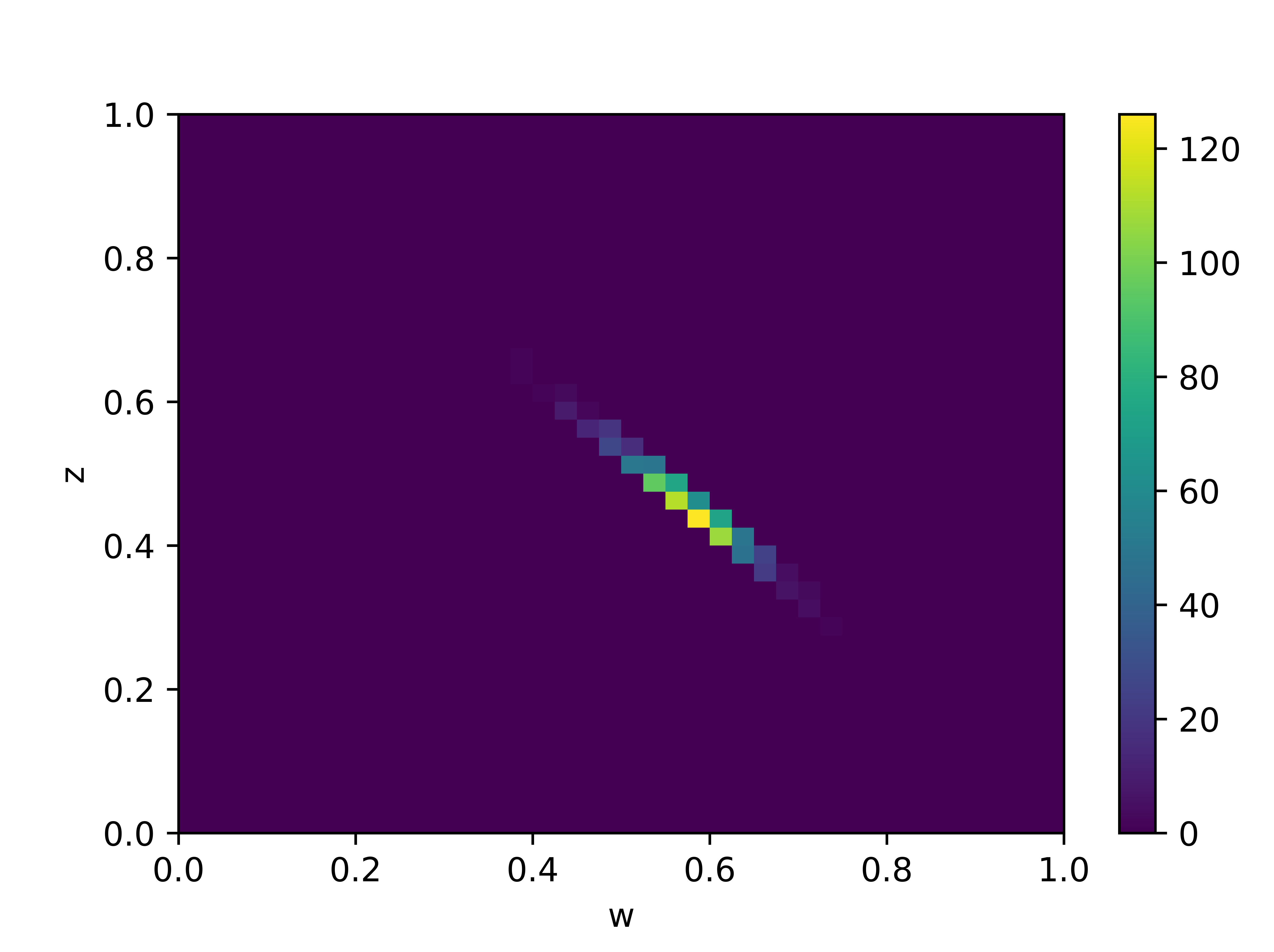}
        \caption[]%
        {{\small 100,000 iterations}}    
        \label{fig:66_2}
    \end{subfigure}
    \hfill
    \caption[]
    {\small Two dimensional histograms containing the results of 1000 trials of the described stochastic gradient descent scheme \eqref{sd3}.  Here we use $p=10$ and stepsize $\alpha(m) = 10^{-5}$. }
    \label{fig:66_numerics}
\end{figure}

By modifying $\alpha(m)$, we can resolve this issue easily, but it requires the knowledge from the previous simulation.  By using $\alpha(m)$ piecewise, where decay starts only at the $1000$th iterations, we can see clear convergence to the local min.  In particular we use
\begin{equation}\label{ex66:modifiedalpha}
    \alpha (m) = \begin{cases}
        c & m\leq1000 \\
        \frac{c}{m-1000} & m>1000.
    \end{cases}
\end{equation}
The results of this simulation are found in Figure \ref{fig:66_modified_alpha}.  Now we see equivalent results at 1000 iterations, while seeing a nice convergence by 5000 iterations: a massive improvement over the previous method.

\begin{figure}
    \centering
    \textbf{Modified $\alpha$ for Example 6.6}\par\medskip
    \begin{subfigure}[b]{0.475\textwidth}
        \centering
        \includegraphics[width=\textwidth]{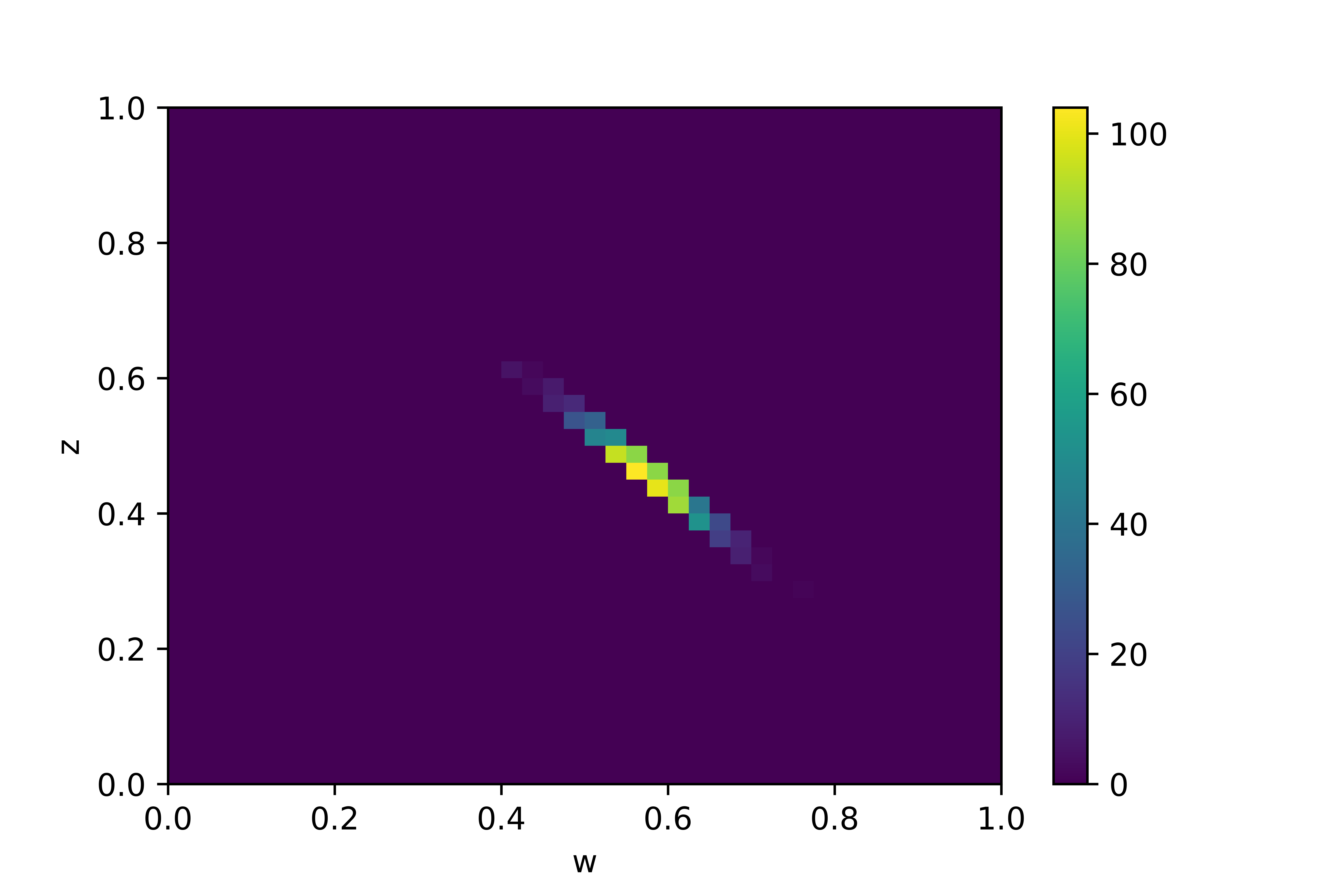}
        \caption[]%
        {{\small 1000 iterations}}    
        \label{fig:66_3}
    \end{subfigure}
    \hfill
    \begin{subfigure}[b]{0.475\textwidth}  
        \centering 
        \includegraphics[width=\textwidth]{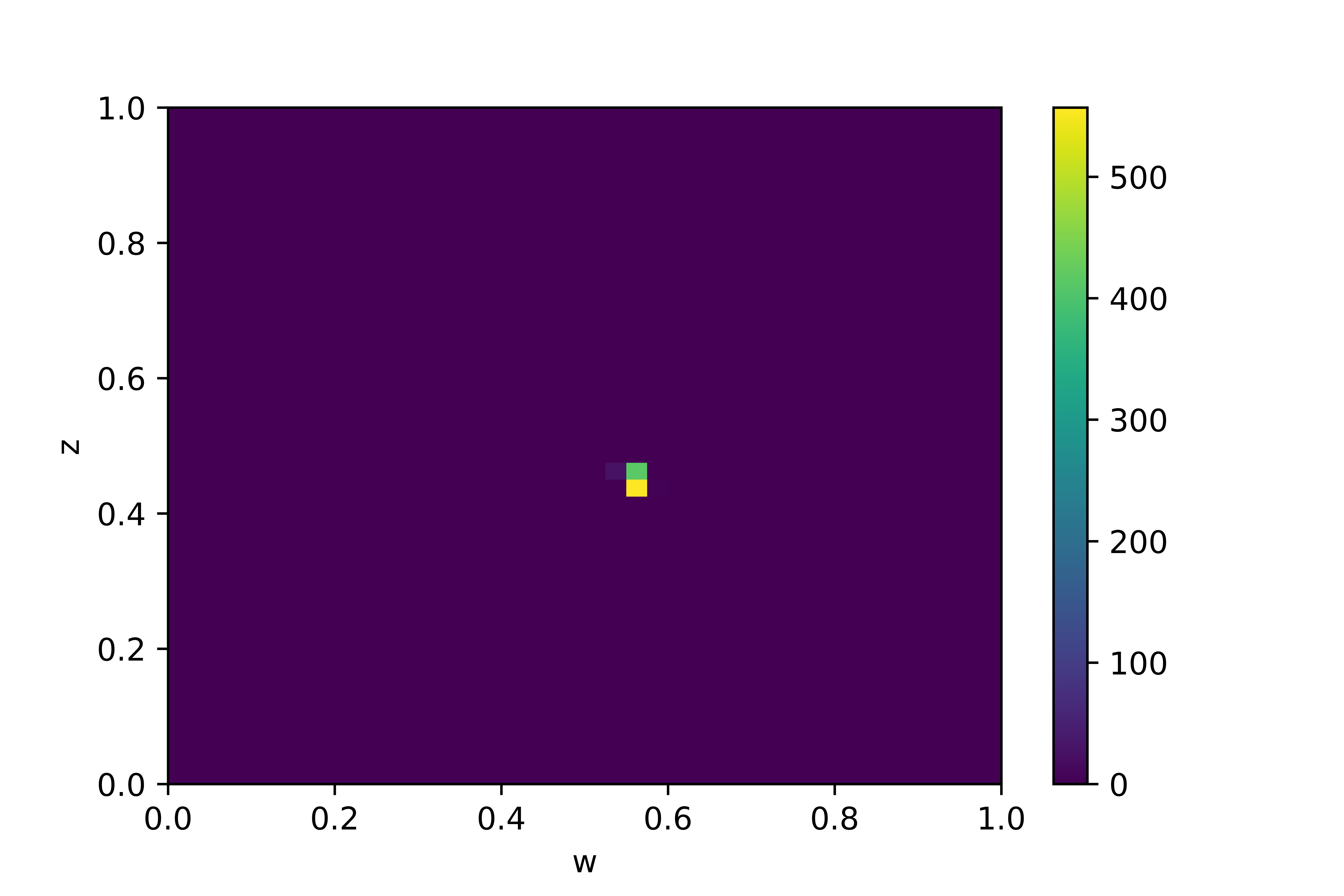}
        \caption[]%
        {{\small 5,000 iterations}}    
        \label{fig:66_4}
    \end{subfigure}
    \hfill
    \caption[]
    {\small Two dimensional histograms containing the results of 1000 trials of the described stochastic gradient descent scheme \eqref{sd3}.  Here we use $p=10$ and stepsize $\alpha(m)$ defined by \eqref{ex66:modifiedalpha}.}
    \label{fig:66_modified_alpha}
\end{figure}

\medskip
\textbf{Example 6.7.}
For this problem, we study the efficacy of SGD on three player rock paper scissors.  This game has several local minima as well as non-unique global minima.  Additionally, there are local and global minima on the boundary of the domain.  Due to the symmetry of the problem and its minima, we restrict player two's strategies by setting $y_1=0$, essentially specifying the orientation with respect to the symmetry.  This leads to the creation of the below Figure \ref{fig:67}.

To generate these figures, we use $\alpha$ defined by
\begin{equation}\label{ex67:alpha}
    \alpha (m) = \begin{cases}
        c & m\leq5000 \\
        \frac{c}{(m-5000)^{.2}} & m>5000.
    \end{cases}
\end{equation}
Additionally, we rescale the matrix by 10 to avoid overflow errors.  This leads to the seemingly strange choice of $c$ for the larger $p$ value, as the gradient is vulnerable to underflow rather than overflow.

\begin{figure}
    \centering
    \textbf{Example 6.7: Rock Paper Scissors}\par\medskip
    \begin{subfigure}[b]{0.475\textwidth}
        \centering
        \includegraphics[width=\textwidth]{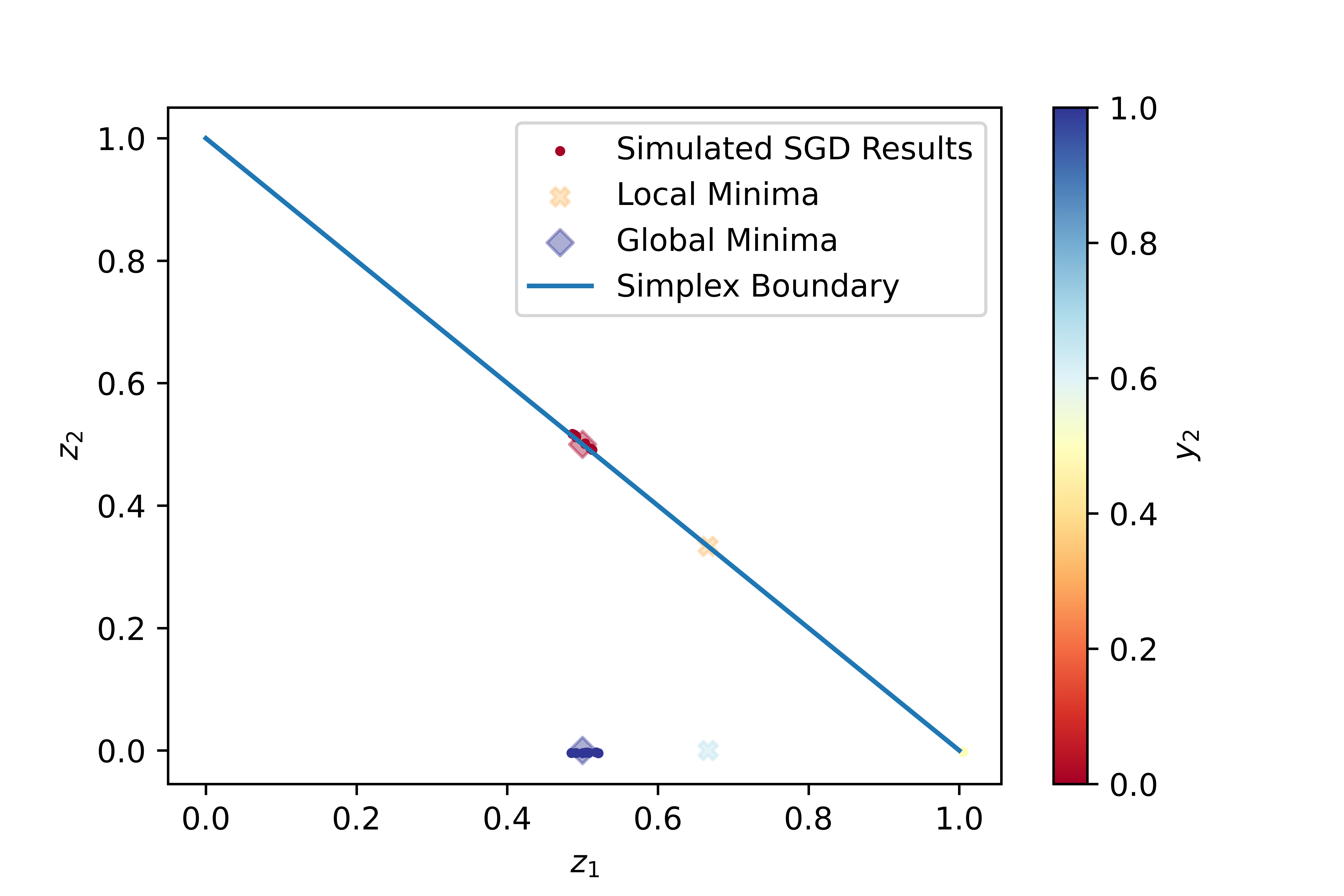}
        \caption[]%
        {{\small $p=2$, $c=.1$}.}    
        \label{fig:67_1}
    \end{subfigure}
    \hfill
    \begin{subfigure}[b]{0.475\textwidth}  
        \centering 
        \includegraphics[width=\textwidth]{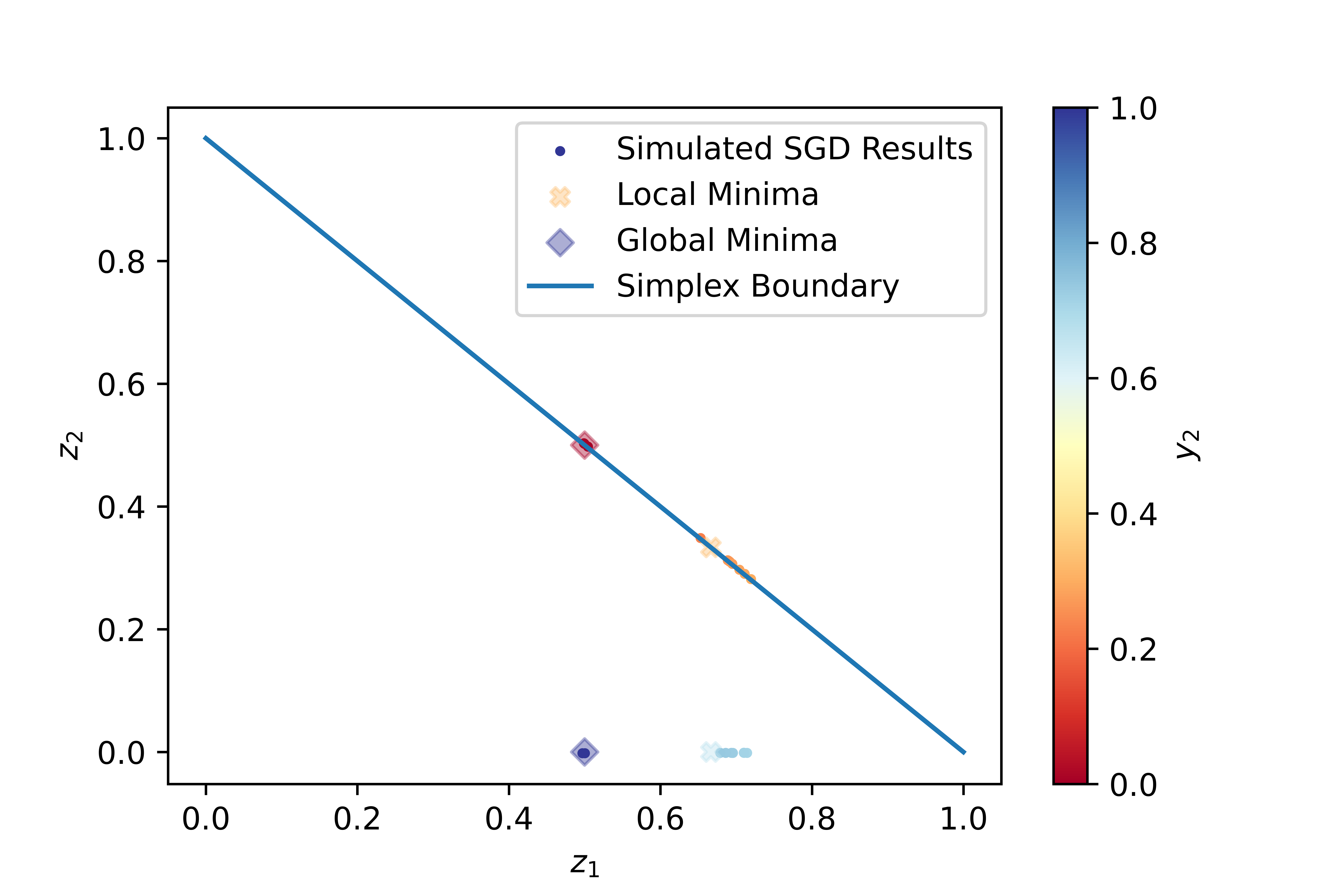}
        \caption[]%
        {{\small $p=10$, $c=10,000$}}    
        \label{fig:67_2}
    \end{subfigure}
    \hfill
    \caption[]
    {\small Here we plot 20 trials of SGD on the Rock Paper Scissors Problem.  For each trial, we perform 100,000 iterations using $\alpha$ defined by \eqref{ex67:alpha}.  Additionally, we plot the local and global minima for the exact problem.  Notice that for both values of $p$, the found minima for the smoothed problem also correspond exactly to minima for the exact non-smoothed problem.  For $p=2$, 4 of the 20 trials converge to the local min (2/3,2/3,0), 8 trials converge to the global min (1,1/2,0), and 8 trials converge to the global min (0,1/2,1/2).  For $p=10$, 8 of the 20 trials converged to the local min (1/3, 2/3, 1/3), 7 of the trials converge to the local min (2/3, 2/3, 0), 2 of the trials converge to the global min (1, 1/2, 0), and 3 of the trials converge to the global min (0, 1/2, 1/2).
    }
    \label{fig:67}
\end{figure}

We see nice convergence to the minima of the exact problem for both the $p=2$ and $p=10$ cases.  Notice especially that for both smoothing values the solutions to the exact non-smoothed problem are found.  For the $p=2$ case we see that only four of the trials converge to a local minima, indicating that they are largely smoothed out of the problem.  For $p=10$ we see many more trials convergence to local minima, though some still converge to the global minima.

\medskip
\textbf{Concluding summary.} 
We see that each of stochastic gradient descent, stochastic coordinate descent, and combined stochastic
gradient-coordinate descent is feasible for our formulation \eqref{cang} of 2- and many-player asynchronous games.
However, the choice of proper scaling and step size is somewhat delicate; likewise, error decreases somewhat
slowly with respect to the smoothing parameter $p$. Both of these suggest that some kind of multigrid iteration
would be essential for scaling up to large-$N$ problems, with $p$, rescaling of the objective, and step size
dynamically modified with successive iterations. A possible side-benefit, illustrated by the results for example
6.7, is that oversmoothing at initial steps ($p$ small) can remove local minima, acting as a sort of annealing
in the process.  Such a treatment of larger games, and systematic comparisons of computational cost,
would be very interesting directions for future exploration.

\clearpage

\end{document}